\numberwithin{equation}{section}
\theoremstyle{theorem}
\newtheorem{theorem}{Theorem}[section]
\newtheorem{proposition}[theorem]{Proposition}
\newtheorem{lemma}[theorem]{Lemma}
\newtheorem{corollary}[theorem]{Corollary}
\theoremstyle{definition}
\newtheorem{definition}[theorem]{Definition}
\newtheorem{example}[theorem]{Example}
\theoremstyle{remark}
\newtheorem{remark}[theorem]{Remark}
\newcommand{\Z}{\mathbb{Z}}
\newcommand{\I}{\mathcal{I}}
\tikzset{myarrow/.style={postaction={decorate},decoration={markings,mark=at position #1 with {\arrow{latex}},}}}
\title[The fundamental group and the MPSS of a directed graph]{The fundamental group and the magnitude-path spectral sequence of a directed graph}
\author{Daisuke Kishimoto}
\address{Faculty of Mathematics, Kyushu University, Fukuoka 819-0395, Japan}
\email{kishimoto@math.kyushu-u.ac.jp}
\author{Yichen Tong}
\address{Institute of Theoretical Sciences, Westlake Institute of Advanced Study, Westlake University, Hangzhou 310030, China}
\email{tongyichen@westlake.edu.cn}
\date{\today}
\subjclass[2010]{05C20, 55Q70}
\keywords{directed graph, fundamental group, magnitude-path spectral sequence, reachability homology, Hurewicz theorem, Seifert-van Kampen theorem}
\begin{document}

\maketitle

\begin{abstract}
  The fundamental group of a directed graph admits a natural sequence of quotient groups called $r$-fundamental groups, and the $r$-fundamental groups can capture properties of a directed graph that the fundamental group cannot capture. The fundamental group of a directed graph is related to path homology through the Hurewicz theorem. The magnitude-path spectral sequence connects magnitude homology and path homology of a directed graph, and it may be thought of as a sequence of homology of a directed graph, including path homology. In this paper, we study relations of the $r$-fundamental groups and the magnitude-path spectral sequence through the Hurewicz theorem and the Seifert-van Kampen theorem.
\end{abstract}

%%%%% Section 1 %%%%%

\section{Introduction}\label{Introduction}

The \emph{path homology} of a directed graph was introduced by Grigor'yan, Lin, Muranov, and Yau \cite{GLMY1}, which is sensible to the directions of edges. Path homology is one of the central research objects involving directed graphs, and has been studied in both pure and applied mathematics. There is another homological invariant of directed graphs, called \emph{magnitude homology}, which is defined by Hepworth and Willerton \cite{HW} as the categorification of a numerical invariant of directed graphs, called magnitude. It has also been studied in different contexts independently of path homology. Recently, Asao \cite{A} found an intimate relation between path homology and magnitude homology; he constructed a fourth quadrant spectral sequence called the \emph{magnitude-path spectral sequence} (MPSS), whose $E^1$-page is magnitude homology and an axis of the $E^2$-page is path homology. Hepworth and Roff \cite{HR1} defined yet another homological invariant of directed graphs, called the \emph{reachability homology}, to which the MPSS converges. On the other hand, they \cite{HR2} proved that for each $r\ge 1$, the $E^r$-page of the MPSS satisfies excision with respect to a cofibration of directed graphs defined in \cite{CD}. Then the MPSS can be thought of as a series of homology of a directed graph, including path homology.

Grigor'yan, Lin, Muranov, and Yau \cite{GLMY2} also introduced the fundamental group of a pointed directed graph, denoted by $\pi_1^\mathrm{GLMY}(X,x_0)$; it is defined by using a $C$-homotopy of paths in a directed graph. They also defined a homotopy between maps of directed graphs, and proved that $\pi_1^\mathrm{GLMY}(X,x_0)$ is a homotopy invariant, where this homotopy is a directed analogue of the $A$-homotopy for undirected graphs studied in \cite{At1,At2,BBLL,BKLW,BL}. They also proved the Hurewicz theorem for $\pi_1^\mathrm{GLMY}(X,x_0)$ where the target homology is path homology. On the other hand, Grigor'yan, Jimenez, and Muranov \cite{GJM} introduced the fundamental groupoid of a directed graph, denoted by $\Pi_1^\mathrm{GJM}(X)$; the definition is algebraic, instead of using a $C$-homotopy. They showed the basic properties of the fundamental groupoid of a directed graph such as the Seifert-van Kampen theorem. Recently, Di, Ivanov, Mukoseev, and Zhang \cite{DIMZ} found a chromatic structure in $\Pi_1^\mathrm{GJM}(X)$; they defined the $r$-fundamental groupoid of a directed graph for $1\le r<\infty$ as the edge-path groupoid of a certain simplicial set defined by a directed graph forming a sequence of natural functors
\begin{equation}
  \label{seq}
  \Pi_1^{1}(X)\to\Pi_1^{2}(X)\to\Pi_1^{3}(X)\to\cdots
\end{equation}
which are the identity map on objects and turn out to be surjective on hom-sets, where there is a natural isomorphism $\Pi_1^2(X)\cong\Pi_1^\mathrm{GJM}(X)$. As in Example \ref{square example}, the sequence \eqref{seq} can detect more structures of a directed graph than just the fundamental group alone. We define the \emph{$r$-fundamental group} of a pointed directed graph $(X,x_0)$ by
\[
  \pi_1^{r}(X,x_0)=\Pi_1^{r}(X)(x_0,x_0).
\]
Since there is a natural isomorphism
\begin{equation}
  \label{GLMY pi_1}
  \pi_1^2(X,x_0)\cong\pi_1^\mathrm{GLMY}(X,x_0)
\end{equation}
(see Corollary \ref{2-fundamental group}), \eqref{seq} yields an analogous chromatic structure of $\pi_1^\mathrm{GLMY}(X,x_0)$.

In this paper, we study relations of the MPSS and the $r$-fundamental groupoid, hence the $r$-fundamental group, where we extend the definitions of $r$-fundamental groupoid to $r=\infty$. We prove the Hurewicz theorem for the $r$-fundamental group and the MPSS. Let $E^r_{p,q}(X)$ denote the MPSS for a directed graph $X$, where we use the standard notation for homology spectral sequences, unlikely to the original paper of Asao \cite{A}. Since the MPSS is a fourth quadrant spectral sequence, there is a natural projection $E^r_{1,0}(X)\to E^{r+1}_{1,0}(X)$ for $2\le r<\infty$. We say that a directed graph is \emph{connected} if any two vertices are connected by a path, i.e. a zig-zag of finitely many edges.

\begin{theorem}
  \label{main 1}
  Let $(X,x_0)$ be a connected pointed directed graph. Then there is a commutative diagram
  \begin{equation}
    \label{ladder h}
    \xymatrix{
      \pi_1^{2}(X,x_0)\ar[r]\ar[d]^{h^{2}}&\pi_1^3(X,x_0)\ar[r]\ar[d]^{h^3}&\pi_1^4(X,x_0)\ar[d]^{h^4}\ar[r]&\cdots\\
      E^2_{1,0}(X)\ar[r]&E^3_{1,0}(X)\ar[r]&E^4_{1,0}(X)\ar[r]&\cdots
    }
  \end{equation}
  such that each $h^{r}$ is identified with abelianization, where the top sequence is induced from \eqref{seq} and the bottom sequence consists of natural projections. Moreover, there is a natural map
  \[
    h^{\infty}\colon\pi_1^{\infty}(X,x_0)\to\mathrm{RH}_1(X)
  \]
  which is identified with abelianization, where $\mathrm{RH}_*(X)$ denotes the reachability homology.
\end{theorem}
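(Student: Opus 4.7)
The plan is to realize each $\pi_1^r(X,x_0)$ as the edge-path group of an explicit simplicial set, identify $E^r_{1,0}(X)$ with $H_1$ of that simplicial set, and invoke the classical Hurewicz theorem for simplicial sets. Following \cite{DIMZ}, $\Pi_1^r(X)$ is by construction the edge-path groupoid of a simplicial set $N^{(r)}(X)$ whose $1$-simplices record pairs of vertices joined by a directed path of length at most $r$ and whose higher simplices encode the allowed homotopy relations. The classical Hurewicz theorem then gives a natural isomorphism $\pi_1^r(X,x_0)^{\mathrm{ab}}\cong H_1(N^{(r)}(X))$, so the substantive task is to produce a natural isomorphism $H_1(N^{(r)}(X))\cong E^r_{1,0}(X)$ for $2\le r<\infty$.

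For this identification I would unpack the MPSS filtration at bidegree $(1,0)$. At $E^1$ the $(1,0)$-piece is built from edges modulo length-preserving relations, and the differentials $d^r$ landing at $(1,0)$ originate at bidegree $(1+r,1-r)$, i.e.\ from $2$-chains of filtration $r+1$. These successive quotients should match, level by level, the $2$-simplex relations added to $N^{(r)}(X)$ as $r$ grows. Matching the bookkeeping on both sides yields the desired isomorphism $E^r_{1,0}(X)\cong H_1(N^{(r)}(X))$, and the Hurewicz map $h^r$ is defined as the composite $\pi_1^r(X,x_0)\twoheadrightarrow\pi_1^r(X,x_0)^{\mathrm{ab}}\cong E^r_{1,0}(X)$, which on representatives sends a loop $(x_0,\ldots,x_n=x_0)$ to the cycle $\sum_i(x_i,x_{i+1})$.

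Commutativity of \eqref{ladder h} then follows from naturality of the above isomorphisms under the simplicial maps $N^{(r)}(X)\to N^{(r+1)}(X)$, which induce \eqref{seq} on fundamental groups and the natural projections on $H_1$. For $r=\infty$ I would take $N^{(\infty)}(X)$ to be the colimit, whose normalized chain complex agrees with the reachability chain complex, giving $H_1(N^{(\infty)}(X))\cong\mathrm{RH}_1(X)$, and $h^\infty$ is constructed in the same way. The principal obstacle is the chain-level matching in $E^r_{1,0}(X)\cong H_1(N^{(r)}(X))$: one must verify that the cumulative effect of the differentials $d^2,\ldots,d^{r-1}$ of the MPSS entering $(1,0)$ corresponds precisely to the $2$-simplex relations appearing in $N^{(r)}(X)$, which requires a careful comparison of the length filtration on the reachability chain complex with the low-dimensional skeleta of the nerves $N^{(r)}(X)$.
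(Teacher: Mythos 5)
Your overall strategy is the same as the paper's: identify $\pi_1^r(X,x_0)$ with the fundamental group of $|\mathrm{N}^r(X)|$, apply the classical Hurewicz theorem there, and reduce everything to a natural isomorphism $E^r_{1,0}(X)\cong H_1(\mathrm{N}^r(X))=H_1(F_r\mathrm{RC}_*(X))$; the $r=\infty$ case via the colimit and the reachability complex is also exactly what the paper does. The issue is that the step you yourself flag as ``the principal obstacle'' is the entire content of the theorem beyond classical Hurewicz, and your plan for it --- a page-by-page chase matching the images of $d^2,\dots,d^{r-1}$ against the $2$-simplices of $\mathrm{N}^r(X)$ --- is left as ``matching the bookkeeping,'' which is not a proof. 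In particular, such a chase only computes $E^r_{1,0}$ as a quotient of $E^2_{1,0}$ by boundaries of certain $2$-chains; to conclude that this quotient equals $H_1(F_r\mathrm{RC}_*(X))$ you still need two genuinely nontrivial inputs: (a) surjectivity of $H_1(F_1\mathrm{RC}_*(X))\to H_1(F_r\mathrm{RC}_*(X))$, i.e.\ that every $1$-cycle of filtration $\le r$ is homologous within $F_r$ to one supported on edges, and (b) that nothing is lost in passing to the quotient by $F_0$, which holds because $F_0\mathrm{RC}_1(X)=0$. Neither is addressed in your sketch.

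The paper avoids the differential chase altogether: it invokes the Cartan--Eilenberg closed-form description
\[
  E^r_{1,0}(X)\cong\mathrm{Im}\bigl\{H_1(F_1\mathrm{RC}_*(X))\to H_1(F_r\mathrm{RC}_*(X)/F_0\mathrm{RC}_*(X))\bigr\},
\]
gets injectivity of $H_1(F_r\mathrm{RC}_*(X))\to H_1(F_r\mathrm{RC}_*(X)/F_0\mathrm{RC}_*(X))$ from $F_0\mathrm{RC}_1(X)=0$, and --- this is the neat point --- deduces the surjectivity (a) from the Hurewicz ladder itself, since $\pi_1^1(X,x_0)\to\pi_1^r(X,x_0)$ and the Hurewicz maps are surjective. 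So your proposal is salvageable, but as written it defers the one step that actually requires an idea; you should either carry out the cycle-level argument for (a) and (b) explicitly or replace the page-by-page comparison with the image formula for $E^r_{p,q}$ of a filtered complex.
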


\begin{remark}
  Let $(X,x_0)$ be a pointed directed graph. As mentioned above, $E^2_{1,0}(X)$ is isomorphic to the first path homology of $X$. Then by \eqref{GLMY pi_1}, Theorem \ref{main 1}  recovers the above mentioned Hurewicz theorem for $\pi_1^\mathrm{GLMY}(X,x_0)$ proved in \cite{GLMY1}.
\end{remark}

We also prove the Seifert-van Kampen theorem for the $r$-fundamental groupoids of an \emph{$r$-separable} pair of directed graphs for $1\le r\le\infty$. The $2$-separability is weaker than the condition assumed for the Seifert-van Kampen theorem for $\Pi_1^\mathrm{GJM}(X)$ in \cite{GJM}, so our result refines it. To prove the Seifert-van Kampen theorem, we need a combinatorial description of the $r$-fundamental groupoid, that is, a description in terms of the homotopy classes of paths in a directed graph by combinatorially given homotopies between paths. The homotopy of paths is given by using a certain graph $\Gamma_r$ (see Section \ref{Combinatorial description}) which detects a nontrivial differential on the $E^r$-page of the MPSS (Proposition \ref{differential}).

\begin{theorem}
  \label{main 2}
  Let $(X,Y)$ be an $r$-separable pair of directed graphs. Then the commutative diagram
  \[
    \xymatrix{
      \Pi_1^{r}(X\cap Y)\ar[r]\ar[d]&\Pi_1^{r}(X)\ar[d]\\
      \Pi_1^{r}(Y)\ar[r]&\Pi_1^{r}(X\cup Y)
    }
  \]
  is a pushout of groupoids.
\end{theorem}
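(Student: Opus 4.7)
The plan is to verify the universal property of the pushout directly, using the combinatorial description of $\Pi_1^r$ developed in Section \ref{Combinatorial description}. In that description, morphisms of $\Pi_1^r$ are homotopy classes of paths in the directed graph, where the equivalence relation is generated by elementary moves coming from the graph $\Gamma_r$. Thus I need to show two things: (a) every path in $X\cup Y$ is equivalent to a concatenation of paths each lying entirely in $X$ or entirely in $Y$, and (b) every elementary $\Gamma_r$-homotopy taking place in $X\cup Y$ can be realized by a sequence of $\Gamma_r$-homotopies that each live in $X$ or in $Y$, with intermediate paths living in $X\cap Y$ whenever they need to be shared.

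Given any target groupoid $G$ together with functors $F_X\colon\Pi_1^r(X)\to G$ and $F_Y\colon\Pi_1^r(Y)\to G$ agreeing on $\Pi_1^r(X\cap Y)$, I would define a candidate functor $F\colon\Pi_1^r(X\cup Y)\to G$ on the level of paths by choosing a decomposition $\alpha=\alpha_1\cdots\alpha_n$ of a path $\alpha$ into segments lying in $X$ or $Y$ and setting $F(\alpha)$ equal to the corresponding product of $F_X$- and $F_Y$-values. Independence of the decomposition follows because any two such decompositions admit a common refinement obtained by inserting breakpoints at vertices of $X\cap Y$, where $F_X$ and $F_Y$ agree. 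Functoriality under concatenation of paths is then immediate.

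The essential step is showing that $F$ respects the $\Gamma_r$-homotopy relations; this is exactly where $r$-separability must enter. I would formulate a subdivision lemma stating that for an $r$-separable pair, any morphism $\Gamma_r\to X\cup Y$ witnessing an elementary homotopy can be replaced by a finite sequence of morphisms $\Gamma_r\to X$ and $\Gamma_r\to Y$ whose composite realizes the same relation, with the interface paths lying in $X\cap Y$. Granting this, $F$ is well-defined, it is manifestly the unique functor extending $F_X$ and $F_Y$, and the pushout property is established. The case $r=\infty$ should follow either by taking colimits of the finite-$r$ pushouts, or by the same argument applied to the extended graph $\Gamma_\infty$.

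The main obstacle will be establishing the subdivision lemma for $\Gamma_r$-homotopies with $r\geq 3$. For $r=2$ the relations are generated by squares and the subdivision reduces to the familiar argument in Brown's groupoid Seifert-van Kampen theorem, but for larger $r$ the graph $\Gamma_r$ encodes elementary moves corresponding to nontrivial differentials on the $E^r$-page (Proposition \ref{differential}), and these involve longer detours; the hypothesis of $r$-separability must be strong enough to guarantee that every such detour can be rerouted through $X\cap Y$. Carefully matching the combinatorics of $\Gamma_r$ with the definition of $r$-separability, and handling the limiting case $r=\infty$ where paths can be arbitrarily long, will be the technical heart of the argument.
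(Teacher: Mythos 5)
Your proposal is correct and takes essentially the same route as the paper: define the candidate functor on paths decomposed into segments lying in $X$ or $Y$ (equivalently, work through the $r=1$ case, where every pair is $1$-separable), and then check that the $\Gamma_r$-relations are respected by subdividing each degenerate $\Gamma_r$ in $X\cup Y$ into degenerate $\Gamma_r$'s lying in $X$ or in $Y$. Two remarks. First, the ``subdivision lemma'' you identify as the main obstacle is not something to be proved from scratch: the definition of $r$-separability \emph{is} precisely the statement that every degenerate $\Gamma_r$ in $X\cup Y$ admits such a subdivision, and Proposition \ref{subdivision} (via Lemma \ref{subdivision composite}) converts this into the decomposition of $C_r$-homotopies you need, so the technical heart is already in the hypothesis rather than in the proof. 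Second, your first suggestion for $r=\infty$ --- taking colimits of the finite-$r$ pushouts --- does not work, because $\infty$-separability only guarantees that a degenerate $\Gamma_r$ subdivides into degenerate $\Gamma_s$'s for some $s\ge r$ depending on the loop, and in particular does not imply $r$-separability for any finite $r$; one should instead run the same argument using part (2) of Proposition \ref{subdivision} together with the combinatorial description of $\Pi_1^\infty$, as the paper does.
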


As a corollary to Theorem \ref{main 2}, the Seifert-van Kampen theorem for the $r$-fundamental groups of an $r$-separable pair is obtained (Corollary \ref{van Kampen pi_1}). We also get the following Mayer-Vietoris sequence for $E^r_{1,0}$ of the MPSS for any $1\le r<\infty$. For directed graphs $X$ and $Y$, let $i_A\colon X\cap Y\to A$ and $j_A\colon A\to X\cup Y$ denote inclusions for $A=X,Y$.

\begin{corollary}
  \label{Mayer-vietoris sequence}
  Let $(X,Y)$ be an $r$-separable pair of directed graphs with $2\le r<\infty$. If $X,Y,X\cap Y$ are connected, then the sequence
  \[
    E_{1,0}^r(X\cap Y)\xrightarrow{((i_X)_*,(i_Y)_*)}E_{1,0}^r(X)\oplus E_{1,0}^r(Y)\xrightarrow{(j_X)_*-(j_Y)_*}E_{1,0}^r(X\cup Y)\to 0
  \]
  is exact.
\end{corollary}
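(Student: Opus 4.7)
The plan is to deduce the corollary from Theorem~\ref{main 2} (Seifert--van Kampen for $r$-fundamental groupoids) and Theorem~\ref{main 1} (Hurewicz identification of $E^r_{1,0}$ with the abelianization of $\pi_1^r$), followed by the elementary observation that a pushout in $\mathrm{Ab}$ is precisely a right-exact Mayer--Vietoris sequence.

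First, I would invoke the group-level consequence, Corollary~\ref{van Kampen pi_1}. Choosing a basepoint $x_0\in X\cap Y$, the connectivity of $X$, $Y$, and $X\cap Y$ ensures that the pushout of groupoids from Theorem~\ref{main 2} restricts at $x_0$ to a pushout of groups,
\[
  \pi_1^r(X\cup Y,x_0)\;\cong\;\pi_1^r(X,x_0)\ast_{\pi_1^r(X\cap Y,x_0)}\pi_1^r(Y,x_0).
\]

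Next, I would abelianize. Abelianization is left adjoint to the inclusion $\mathrm{Ab}\hookrightarrow\mathrm{Grp}$, so it preserves pushouts. Combined with the naturality of $h^r$ and Theorem~\ref{main 1}, which identifies the abelianization of $\pi_1^r(Z,x_0)$ with $E^r_{1,0}(Z)$, this yields a pushout square in $\mathrm{Ab}$ with vertices $E^r_{1,0}(X\cap Y)$, $E^r_{1,0}(X)$, $E^r_{1,0}(Y)$, $E^r_{1,0}(X\cup Y)$, whose arrows are induced by $i_X$, $i_Y$, $j_X$, $j_Y$.

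Finally, I would read off the Mayer--Vietoris sequence. For any pushout $P$ of a span $B\xleftarrow{f}A\xrightarrow{g}C$ in $\mathrm{Ab}$, one has $P=(B\oplus C)/\langle(f(a),-g(a)):a\in A\rangle$, and hence
\[
  A\xrightarrow{(f,g)}B\oplus C\xrightarrow{j_B-j_C}P\to 0
\]
is exact. Applied to our pushout this is precisely the claimed sequence. The argument is entirely formal; the only point demanding care is the descent from a pushout of groupoids to a pushout of groups at the single basepoint $x_0$, which is exactly what the connectivity hypothesis of the corollary buys via Corollary~\ref{van Kampen pi_1}.
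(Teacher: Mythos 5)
Your proposal is correct and follows essentially the same route as the paper: the paper's own (much terser) proof likewise combines Theorem~\ref{main 2} and Corollary~\ref{van Kampen pi_1} with the Hurewicz identification of Theorem~\ref{main 1} to obtain a pushout square of the groups $E^r_{1,0}$, and then reads off the right-exact sequence. Your write-up merely makes explicit the steps the paper leaves implicit (abelianization preserving pushouts as a left adjoint, and the cokernel description of a pushout in $\mathrm{Ab}$), all of which are correct.
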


Consider the pushout of directed graphs
\begin{equation}
  \label{pushout phi}
  \xymatrix{
    A\ar[r]^f\ar[d]&X\ar[d]\\
    Y\ar[r]&X\cup_AY.
  }
\end{equation}
In \cite{CD}, Carranza \textit{et al.} proved that the category of directed graphs carries a cofibration category structure where weak equivalences are maps inducing isomorphisms in path homology. Hepworth and Roff \cite{HR1} defined the relative MPSS, and proved that if the map $f\colon A\to X$ is a cofibration of Carranza \textit{et al.} \cite{CD}, then the natural map
\[
  E^r_{p,q}(X,A)\to E^r_{p,q}(X\cup_AY,Y)
\]
is an isomorphism for any $r\ge 1$, which is excision of the MPSS. They also proved the Mayer-Vietoris sequence of the MPSS of $X\cup_AY$ for $r=1,2$, and asked whether there is the Mayer-Vietoris sequence for $3\le r<\infty$. On the other hand, Hepworth and Roff \cite{HR2} introduced a long cofibration by relaxing the definition of a cofibration, and proved that reachability homology enjoys excision and the Mayer-Vietoris sequence for the pushout \eqref{pushout phi} whenever the map $f\colon A\to X$ is a long cofibration. We define an \emph{$r$-cofibration} of directed graphs for $1\le r\le\infty$ by relaxing the definition of a cofibration of Carranza \textit{et al.} \cite{CD} such that a $1$-cofibration and an $\infty$-cofibration are exactly a cofibration of Carranza \textit{et al.} and a long cofibration, respectively. Then as an application to Theorem \ref{main 2}, we prove the Seifert-van Kampen theorem for the pushout \eqref{pushout phi} with $f\colon A\to X$ an $r$-cofibration (Corollary \ref{van Kampen pushout}). From this corollary, we deduce the Mayer-Vietoris sequence of the $(1,0)$-block of the MPSS of \eqref{pushout phi} for any $1\le r<\infty$ (Corollary \ref{Mayer-Vietoris sequence pushout}), which is a partial answer to the above mentioned Hepworth and Roff's question \cite{HR1}.

\subsection*{Acknowledgement}

The authors are grateful to Yasuhiko Asao and Luigi Caputi for discussions. They are also grateful to the referee for useful comments that substantially simplified the proof of Theorem \ref{main 1}. The first author was partially supported by JSPS KAKENHI Grant Number JP22K03284, and the second author was partially supported by JST SPRING Grant Number JPMJSP2110.

%%%%% Section 2 %%%%%

\section{$r$-Fundamental groupoid}\label{r-Fundamental groupoid}

In this section, we recall the $r$-fundamental groupoid of a directed graph defined by Di, Ivanov, Mukoseev, and Zhang \cite{DIMZ} and its algebraic description.

%%%%% Subsection 2.1 %%%%%

\subsection{Directed graph}

We set notation and terminology for directed graphs.

\begin{definition}
  A \emph{directed graph} $X$ consists of the vertex set $V(X)$ and the edge set $E(X)\subset V(X)\times V(X)-\Delta$, where $\Delta$ denotes the diagonal set and $(x,y)\in E(X)$ means an edge directed from $x$ to $y$.
\end{definition}

Note that we do not allow a directed graph to have loop edges and multiple edges with the same direction. Let $X$ be a directed graph. A \emph{subgraph} of $X$ is a directed graph whose vertex and edge sets are subsets of $V(X)$ and $E(X)$, respectively. The \emph{induced subgraph} of a directed graph $X$ over $W\subset V(X)$, denoted $X_W$, is a subgraph of $X$ such that $E(X_W)=W$ and
\[
  E(X_W)=\{(x,y)\in E(X)\mid x,y\in W\}.
\]
We define the underlying undirected graph $\widehat{X}$ of $X$ by forgetting about directions of edges. In particular, $\widehat{X}$ has multiple edges between vertices $x$ and $y$ if and only if $(x,y),(y,x)\in E(X)$.

\begin{definition}
  A \emph{map} $f\colon X\to Y$ between directed graphs $X$ and $Y$ is a map $f\colon V(X)\to V(Y)$ such that for any $(x,y)\in V(X)$, either $(f(x),f(y))\in E(Y)$ or $f(x)=f(y)$ holds.
\end{definition}

Recall that a \emph{homomorphism} $f\colon X\to Y$ between directed graphs $X$ and $Y$ is a map such that for any $(x,y)\in E(X)$, $(f(x),f(y))\in E(Y)$ holds. Then homomorphisms are maps, but maps are not necessarily homomorphisms. We define the box product and the strong product of directed graphs.

\begin{definition}
  The \emph{box product} $X\square Y$ of directed graphs $X$ and $Y$ is the directed graph such that $V(X\square Y)=V(X)\times V(Y)$ and $((x_0,y_0),(x_1,y_1))\in E(X\square Y)$ if either of the following conditions holds.
  \begin{enumerate}
    \item $x_0=x_1$ and $(y_0,y_1)\in E(Y)$.

    \item $(x_0,x_1)\in E(X)$ and $y_0=y_1$.
  \end{enumerate}
\end{definition}

The \emph{strong product} $X\boxtimes Y$ of directed graphs $X$ and $Y$ is defined by adding edges $((x_0,x_1),(y_0,y_1))$ with $(x_0,x_1)\in E(X)$ and $(y_0,y_1)\in E(Y)$ to $X\square Y$. Note that the strong product is the product in the category of directed graph, while the box product just defines a monoidal structure on this category.

%%%%% Subsection 2.2 %%%%%

\subsection{Nerve}

Let $X$ be a directed graph. For vertices $x$ and $y$ of $X$, we define $d(x,y)$ to be the minimal integer $n$ such that there is a sequence of vertices $x=x_0,x_1,\ldots,x_n=y$ of $X$ where $(x_i,x_{i+1})$ is an edge of $X$ for $i=0,1,\ldots,n-1$. If no such an integer exists, then we set $d(x,y)=\infty$. Observe that $d(x,y)=0$ if and only if $x=y$, and the triangle inequality
\begin{equation}
  \label{triangle inequality}
  d(x,y)+d(y,z)\ge d(x,z)
\end{equation}
holds. However, $d(x,y)\ne d(y,x)$ in general. Then $d(x,y)$ is a quasimetric on $X$. For $(x_0,\ldots,x_n)\in V(X)^{n+1}$, we define
\[
  L(x_0,\ldots,x_n)=\sum_{i=0}^{n-1}d(x_i,x_{i+1}).
\]

\begin{definition}
  The \emph{nerve} $\mathrm{N}(X)$ of a directed graph $X$ is defined to be the simplicial set such that
  \[
    \mathrm{N}_n(X)=\{(x_0,\ldots,x_n)\in V(X)^{n+1}\mid L(x_0,\ldots,x_n)<\infty\}
  \]
  and the face maps and the degeneracy maps are given by
  \begin{align*}
    d_i(x_0,\ldots,x_n)&=(x_0,\ldots,\widehat{x_i},\ldots,x_n)\\
    s_i(x_0,\ldots,x_n)&=(x_0,\ldots,x_i,x_i,\ldots,x_n)
  \end{align*}
\end{definition}

Observe that
\begin{equation}
  \label{L}
  L(d_i(\sigma))\le L(\sigma)\quad\text{and}\quad L(s_i(\sigma))=L(\sigma)
\end{equation}
for $\sigma\in\mathrm{N}_n(X)$. Then the nerve of a directed graph is well-defined. A directed graph $X$ defines a preorder $P$ such that $P=V(X)$ as a set and $x<y$ for $x,y\in P$ if $d(x,y)<\infty$. As in \cite[Proposition 1.1]{DIMZ}, it is obvious that the nerve of a directed graph $X$ coincides with the nerve of the preorder $P$ considered as a category.

For $0\le r<\infty$, we define the simplicial set $\mathrm{N}^r(X)$ for a directed graph $X$ by
\[
  \mathrm{N}_n^r(X)=\{(x_0,\ldots,x_n)\in V(X)^{n+1}\mid L(x_0,\ldots,x_n)\le r\}
\]
and the face maps and the degeneracy maps are the restriction of those of $\mathrm{N}(X)$. By \eqref{L}, $\mathrm{N}^r(X)$ is a well-defined simplicial set, and we get a sequence of simplicial sets
\[
  \mathrm{N}^0(X)\subset\mathrm{N}^1(X)\subset\mathrm{N}^2(X)\subset\cdots\subset\mathrm{N}(X).
\]

\begin{definition}
  For $1\le r\le\infty$, the \emph{$r$-fundamental groupoid} of a directed graph $X$, denoted by $\Pi_1^r(X)$, is defined to be the edge-path groupoid of $\mathrm{N}^r(X)$, where $\mathrm{N}^\infty(X)=\mathrm{N}(X)$.
\end{definition}

The $1$-fundamental groupoid can be easily computed.

\begin{proposition}
  \label{1-fundamental groupoid}
  For a directed graph $X$, $\Pi_1^1(X)$ is naturally isomorphic to the edge-path groupoid of the underlying undirected graph of $X$.
\end{proposition}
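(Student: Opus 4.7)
The plan is to unpack the definition of $\mathrm{N}^1(X)$ in low degrees and observe that it has essentially no $2$-dimensional content. By definition, $\mathrm{N}^1_n(X)$ consists of tuples $(x_0,\ldots,x_n)$ with $\sum_{i=0}^{n-1} d(x_i,x_{i+1}) \le 1$. Since each $d(x_i,x_{i+1})$ is a non-negative integer, this sum is $\le 1$ iff either all consecutive terms are equal, or exactly one consecutive pair satisfies $d(x_i,x_{i+1})=1$ (i.e.\ forms an edge of $X$) and the others are equal.

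Specialising to low dimensions: the $0$-simplices are the vertices of $X$; the $1$-simplices are the pairs $(x,y)$ with either $x=y$ (a degenerate simplex $s_0(x)$) or $(x,y)\in E(X)$; and every $2$-simplex $(x_0,x_1,x_2)$ must have $x_0=x_1$ or $x_1=x_2$, so it is degenerate, namely $s_0(x_1,x_2)$ or $s_1(x_0,x_1)$. Thus the non-degenerate $1$-simplices of $\mathrm{N}^1(X)$ are precisely the elements of $E(X)$, and $\mathrm{N}^1(X)$ has no non-degenerate $2$-simplices.

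Next I would recall that the edge-path groupoid of a simplicial set $K$ is generated by its $1$-simplices modulo the degeneracy relations $s_0(x)=\mathrm{id}_x$ together with one relation $d_1(\sigma)=d_0(\sigma)\circ d_2(\sigma)$ for each $2$-simplex $\sigma$. Since every $2$-simplex of $\mathrm{N}^1(X)$ is degenerate, each such relation is a tautology once the degeneracy relations are imposed. Consequently $\Pi_1^1(X)$ is the free groupoid on the non-degenerate $1$-simplices, that is, the free groupoid on the edge set $E(X)$ with objects $V(X)$.

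Finally I would identify this with the edge-path groupoid $E(\widehat{X})$ of the underlying undirected graph. Since $\widehat{X}$ is a $1$-dimensional CW/simplicial complex with the same vertex set and with undirected edges corresponding bijectively to elements of $E(X)$ (including the doubling when $(x,y),(y,x)\in E(X)$), its edge-path groupoid is also the free groupoid on $E(X)$ over $V(X)$. Sending each generator $(x,y)\in E(X)\subset \mathrm{N}^1_1(X)$ to the corresponding undirected edge gives the desired isomorphism; naturality in maps of directed graphs follows immediately because both constructions are defined on generators and the assignment is functorial in $X$. The only subtle point, and the one I would state carefully, is that a map of directed graphs may collapse an edge to a vertex, but this is accommodated on both sides because a collapsed edge is sent to an identity/degenerate $1$-simplex, so the proposed correspondence remains well defined.
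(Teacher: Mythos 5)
Your proposal is correct and follows the same route as the paper: the paper's proof is the single observation that $\mathrm{N}^1(X)$ is identified with the underlying undirected graph, and your argument simply spells out that identification (all simplices in degree $\ge 2$ are degenerate, the non-degenerate $1$-simplices are exactly $E(X)$, and the resulting edge-path groupoid is free on $E(X)$ over $V(X)$ on both sides). The extra care you take with degenerate $2$-simplices and with maps that collapse edges is a faithful elaboration of the paper's one-line proof rather than a different method.
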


\begin{proof}
  By definition, $\mathrm{N}^1(X)$ is identified with the underlying undirected graph of $X$, so the statement follows.
\end{proof}

Then the $1$-fundamental groupoid forgets about directions of edges of a directed graph. But the following example shows that the $r$-fundamental groupoid for $2\le r\le\infty$ is sensible to directions of edges. The example also shows that the sequence of the $r$-fundamental groupoids is certainly a stronger invariant than just one $r$-fundamental groupoid alone.

\begin{example}
  \label{square example}
  Consider the following directed graphs $X_1,X_2,X_3,X_4$.

  \begin{figure}[htbp]
    \centering
    \begin{tikzpicture}[x=5mm, y=5mm, thick]
      \draw[myarrow=.6](0,0)--(0,3);
      \draw[myarrow=.6](0,3)--(3,3);
      \draw[myarrow=.6](3,0)--(3,3);
      \draw[myarrow=.6](0,0)--(3,0);
      \fill[black](0,0) circle(2pt);
      \fill[black](3,0) circle(2pt);
      \fill[black](3,3) circle(2pt);
      \fill[black](0,3) node[left]{$x_1$} circle(2pt);
      \draw(1.5,3) node[above=1mm]{$X_1$};
      \draw[myarrow=.6](6,0)--(6,3);
      \draw[myarrow=.6](6,3)--(9,3);
      \draw[myarrow=.6](9,3)--(9,0);
      \draw[myarrow=.6](6,0)--(9,0);
      \fill[black](6,0) circle(2pt);
      \fill[black](9,0) circle(2pt);
      \fill[black](9,3) circle(2pt);
      \fill[black](6,3) node[left]{$x_2$} circle(2pt);
      \draw(7.5,3) node[above=1mm]{$X_2$};
      \draw[myarrow=.6](12,0)--(12,3);
      \draw[myarrow=.6](12,3)--(15,3);
      \draw[myarrow=.6](15,3)--(15,0);
      \draw[myarrow=.6](15,0)--(12,0);
      \fill[black](12,0) circle(2pt);
      \fill[black](15,0) circle(2pt);
      \fill[black](15,3) circle(2pt);
      \fill[black](12,3) node[left]{$x_3$} circle(2pt);
      \draw(13.5,3) node[above=1mm]{$X_3$};
      \draw[myarrow=.6](18,3)--(18,0);
      \draw[myarrow=.6](18,3)--(21,3);
      \draw[myarrow=.6](21,0)--(21,3);
      \draw[myarrow=.6](21,0)--(18,0);
      \fill[black](18,0) circle(2pt);
      \fill[black](21,0) circle(2pt);
      \fill[black](21,3) circle(2pt);
      \fill[black](18,3) node[left]{$x_4$} circle(2pt);
      \draw(19.5,3) node[above=1mm]{$X_4$};
    \end{tikzpicture}
    \caption{The directed graphs $X_1,X_2,X_3,X_4$}
  \end{figure}

  \noindent Then the hom-sets $\Pi_1^{r}(X_i)(x_i,x_i)$ for $i,r=1,2,3,4$ are groups given in the following table. Then the sequences $\Pi_1^1(X_i),\ldots,\Pi_1^4(X_i)$ distinguish $X_1,\ldots,X_4$ though just one $\Pi_1^r(X_i)$ alone for $r=1,2,3,4$ cannot.

  \begin{table}[htbp]
    \renewcommand{\arraystretch}{1.2}
    \begin{tabular}{|c|c|c|c|c|}
      \hline
      &$r=1$&$r=2$&$r=3$&$r=4$\\
      \hline
      $X_1$&$\Z$&$0$&$0$&$0$\\
      $X_2$&$\Z$&$\Z$&$0$&$0$\\
      $X_3$&$\Z$&$\Z$&$\Z$&$0$\\
      $X_4$&$\Z$&$\Z$&$\Z$&$\Z$\\
      \hline
    \end{tabular}
  \end{table}
\end{example}

We consider how the $\infty$-fundamental groupoid is recovered from the $r$-fundamental groupoids for $1\le r<\infty$.

\begin{lemma}
  \label{realization}
  There is a natural isomorphism between the edge-path groupoid of a simplicial set $X$ and the full subcategory of the fundamental groupoid of the geometric realization $|X|$ with objects $X_0$.
\end{lemma}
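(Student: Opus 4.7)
The plan is to identify both sides with the fundamental groupoid of the $2$-skeleton and then invoke the standard cellular description of $\pi_1$.

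First, I would observe that both constructions depend only on the $2$-truncation of $X$. For the edge-path groupoid this is immediate from the definition: the generators come from $X_1$ and the relations from $X_2$ (together with the identities coming from degenerate edges in $s_0X_0$). For the fundamental groupoid of $|X|$, the inclusion of the $2$-skeleton $|X^{(2)}|\hookrightarrow|X|$ induces isomorphisms on $\pi_1(|X|,v)$ for every $v\in X_0$, since attaching cells of dimension $\ge 3$ leaves $\pi_1$ unchanged; restricting to the full subcategory with objects $X_0$ yields the reduction on both sides.

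Second, I would construct a natural functor $\Phi_X$ from the edge-path groupoid of $X$ to the full subcategory of the fundamental groupoid of $|X|$ on $X_0$. On objects $\Phi_X$ is the identity. A non-degenerate edge $e\in X_1$ with $d_1e=v$, $d_0e=w$ is sent to the homotopy class of the affine path traversing $|e|\subset|X|$ from $v$ to $w$, and a degenerate edge $s_0v$ is sent to the constant path. A $2$-simplex $\sigma\in X_2$ produces a filled triangle $|\sigma|$, whose contractibility provides a homotopy witnessing the edge-path relation attached to $\sigma$ (that the composition along two sides equals the third). Hence $\Phi_X$ is well defined, and naturality in $X$ is manifest.

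Third, I would construct an inverse $\Psi_X$. Given a path $\gamma$ in $|X^{(2)}|$ between two vertices, simplicial approximation (applied after a sufficiently fine subdivision of $[0,1]$) represents $\gamma$ up to path-homotopy by an edge-path in the $1$-skeleton, assigning to $[\gamma]$ a class in the edge-path groupoid. To verify independence of the subdivision and of the representative, one applies simplicial approximation to a path-homotopy $[0,1]^2\to|X^{(2)}|$: the resulting triangulation produces a finite chain of elementary moves, each of which is either the insertion of a degenerate edge or the relation attached to a single $2$-simplex. Functoriality in concatenation is then clear, and the identities $\Psi_X\circ\Phi_X=\mathrm{id}$ and $\Phi_X\circ\Psi_X=\mathrm{id}$ follow directly from the constructions. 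The main obstacle is precisely this bookkeeping: one must make the assignment $[\gamma]\mapsto$ (edge-path class) canonical and verify that every combinatorial move produced by simplicial approximation is already a consequence of the defining relations. A cleaner alternative, which I would prefer, uses the reduction in the first step to replace $X$ by a $2$-dimensional simplicial complex, whereupon the lemma becomes the assertion that the CW presentation $\pi_1(|X^{(2)}|,v)=\langle X_1\mid X_2\rangle$ coincides with the standard presentation of the edge-path group.
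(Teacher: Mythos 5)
Your argument is correct and is essentially the proof the paper invokes: the paper's own proof is a one-line citation of Spanier's theorem identifying the edge-path group of a simplicial complex with the fundamental group of its realization, together with the remark that the argument carries over verbatim to simplicial sets, and your three steps (reduction to the $2$-skeleton, the explicit functor sending an edge to its affine path with well-definedness witnessed by the filled triangles $|\sigma|$, and the inverse via simplicial/cellular approximation of paths and path-homotopies) reconstruct exactly that classical argument. There is no substantive difference in approach, so nothing further is needed.
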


\begin{proof}
  The proof for a simplicial complex in \cite[16 Theorem, p. 137]{S} works verbatim for a simplicial set.
\end{proof}

\begin{lemma}
  \label{colim}
  For a directed graph $X$, there is a natural isomorphism
  \[
    \Pi_1^\infty(X)\cong\underset{r\to\infty}{\mathrm{colim}}\,\Pi_1^r(X).
  \]
\end{lemma}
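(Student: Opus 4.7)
The plan is to deduce the isomorphism from two formal facts: that $\mathrm{N}(X)$ is the sequential colimit of the subcomplexes $\mathrm{N}^r(X)$, and that the edge-path groupoid construction preserves such colimits. First, the inclusions $\mathrm{N}^r(X)\subset\mathrm{N}^{r+1}(X)$ exhibit $\mathrm{N}(X)$ as the sequential colimit
\[
  \mathrm{N}(X)\cong\underset{r\to\infty}{\mathrm{colim}}\,\mathrm{N}^r(X)
\]
in the category of simplicial sets: a tuple $(x_0,\ldots,x_n)\in V(X)^{n+1}$ lies in $\mathrm{N}_n(X)$ precisely when $L(x_0,\ldots,x_n)$ is finite, and any such finite value is bounded by some integer $r$, placing the simplex in $\mathrm{N}_n^r(X)$.

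Next, recall that the edge-path groupoid of a simplicial set $K$ has object set $K_0$ and admits a presentation by generators $[\sigma]\colon d_1\sigma\to d_0\sigma$ for $\sigma\in K_1$, subject to the relations $[s_0 x]=\mathrm{id}_x$ for $x\in K_0$ and $[d_1\tau]=[d_0\tau]\cdot[d_2\tau]$ for $\tau\in K_2$. Since this presentation depends only on the $2$-skeleton, the edge-path groupoid functor preserves sequential colimits of monomorphisms of simplicial sets: each generator of the edge-path groupoid of the colimit comes from a $1$-simplex of $\mathrm{N}(X)$, which lies in some $\mathrm{N}^r(X)$, and each defining relation comes from a $2$-simplex of $\mathrm{N}(X)$, which likewise lies in some $\mathrm{N}^r(X)$. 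Applying this to the filtration $\{\mathrm{N}^r(X)\}_{r\ge 1}$ immediately yields the claimed natural isomorphism.

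I expect no real obstacle; the argument is formal once the two ingredients above are in place. An alternative topological route uses Lemma \ref{realization} together with the fact that geometric realization and the fundamental groupoid are both left adjoints and hence preserve colimits, but that approach requires additional care to restrict to the common object set $V(X)$ of all the $\Pi_1^r(X)$, which the simplicial argument above handles automatically.
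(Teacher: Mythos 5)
Your proposal is correct, but it takes a genuinely different route from the paper. The paper's proof also starts from the identity $\mathrm{N}(X)=\mathrm{colim}_{r\to\infty}\mathrm{N}^r(X)$, but then passes through topology: it invokes Lemma \ref{realization} to identify each $\Pi_1^r(X)$ with the fundamental groupoid of the geometric realization $|\mathrm{N}^r(X)|$ restricted to the vertices, and concludes via the chain $\Pi_1^\infty(X)\cong\Pi_1(|\mathrm{N}(X)|)\cong\mathrm{colim}_r\,\Pi_1(|\mathrm{N}^r(X)|)\cong\mathrm{colim}_r\,\Pi_1^r(X)$, using that realization and the (restricted) fundamental groupoid commute with this sequential colimit of CW inclusions. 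You instead stay entirely combinatorial: the edge-path groupoid is presented by generators from the $1$-simplices and relations from the degenerate $1$-simplices and the $2$-simplices, colimits of simplicial sets are computed levelwise, and the ``groupoid presented by'' functor is cocontinuous, so the edge-path groupoid preserves the filtered colimit. Your approach is more self-contained (it does not need Lemma \ref{realization} or any topology) and, as you note, it sidesteps the bookkeeping about restricting to the common object set $V(X)$ that the topological route requires; the paper's approach buys brevity by reusing Lemma \ref{realization}, which it needs elsewhere anyway (e.g.\ in the proof of Theorem \ref{main 1}). One cosmetic remark: with the paper's convention $f\cdot g=g\circ f$, the $2$-simplex relation is more naturally written $[d_1\tau]=[d_2\tau]\cdot[d_0\tau]$; this does not affect your argument.
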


\begin{proof}
  By definition, we have
  \[
    \mathrm{N}(X)=\underset{r\to\infty}{\mathrm{colim}}\,\mathrm{N}^r(X).
  \]
  Then by Lemma \ref{realization}, there are natural isomorphisms
  \[
    \Pi_1^\infty(X)\cong\Pi_1^\infty(|\mathrm{N}(X)|)\cong\underset{r\to\infty}{\mathrm{colim}}\,\Pi_1(|\mathrm{N}^r(X)|)\cong\underset{r\to\infty}{\mathrm{colim}}\,\Pi_1^r(X).
  \]
\end{proof}

%%%%% Subsection 2.2 %%%%%

\subsection{Algebraic description}

We recall the algebraic description of the $r$-fundamental groupoid given in \cite{DIMZ}. Let $X$ be a directed graph. Let
\[
  \widetilde{E}(X)=E(X)\cup\Delta
\]
where $\Delta$ denotes the diagonal set of $V(X)\times V(X)$. We define maps
\[
  s,t\colon\widetilde{E}(X)\to V(X)
\]
by $s(x,y)=x$ and $t(x,y)=y$.

\begin{definition}
  \label{G(X)}
  Let $X$ be a directed graph. For $1\le r<\infty$, we define the groupoid $\mathcal{G}^{r}(X)$ as follows.

  \begin{enumerate}
    \item Objects are vertices of $X$.

    \item each element $e\in\widetilde{E}(X)$ determines morphisms $e\colon s(e)\to t(e)$ and $e^{-1}\colon t(e)\to s(e)$ satisfying
    \[
      e\circ e^{-1}=1_{t(e)}\quad\text{and}\quad e^{-1}\circ e=1_{s(e)}.
    \]

    \item For any $x\in V(X)$, the morphisms $(x,x),(x,x)^{-1}\colon x\to x$ are the identity.

    \item Every morphism is the composite of finitely many morphisms given by elements of $\widetilde{E}(X)$.

    \item For $1\le p,q\le r$, let $e_1,\ldots,e_p,f_1,\ldots,f_q$ be any elements of $\widetilde{E}(X)$ such that $e_1\circ\cdots\circ e_p$ and $f_1\circ\cdots\circ f_q$ are defined. If $t(e_1)=t(f_1)$ and $s(e_p)=s(f_q)$, then
    \[
      e_1\circ\cdots\circ e_p=f_1\circ\cdots\circ f_q.
    \]
  \end{enumerate}
\end{definition}

\begin{remark}
  The conditions (1) to (4) gives a free groupoid generated by $\widetilde{E}(X)$, and that (5) is the only condition which is related to $1\le r<\infty$.
\end{remark}

\begin{proposition}
  [Di, Ivanov, Mukoseev, and Zhang {\cite[Proposition 2.2]{DIMZ}}]
  \label{algebric description}
  For a directed graph $X$ and $1\le r<\infty$, there is a natural isomorphism
  \[
    \Pi_1^r(X)\cong\mathcal{G}^r(X).
  \]
\end{proposition}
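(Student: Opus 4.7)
The plan is to construct mutually inverse functors $\phi\colon\mathcal{G}^r(X)\to\Pi_1^r(X)$ and $\psi\colon\Pi_1^r(X)\to\mathcal{G}^r(X)$, each the identity on the object set $V(X)$, with naturality in $X$ a routine check once the functors are in hand.

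For $\phi$, I send each generator $e=(x,y)\in\widetilde{E}(X)$ to the edge-path class of the $1$-simplex $(x,y)\in\mathrm{N}^r_1(X)$; this is legitimate because $d(x,y)\le 1\le r$. Verifying conditions (1)--(4) of Definition \ref{G(X)} is immediate from the edge-path groupoid formalism: the degenerate $1$-simplex $s_0(x)=(x,x)$ is the identity at $x$, and formal inverses exist in any groupoid. The substantive check is condition (5). Given composable $e_1,\ldots,e_p\in\widetilde{E}(X)$ with $p\le r$, writing the intermediate vertices as $y_0,y_1,\ldots,y_p$ so that $e_i=(y_{p-i},y_{p-i+1})$, each step satisfies $d(y_k,y_{k+1})\le 1$ and therefore $L(y_0,\ldots,y_p)\le p\le r$. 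Hence $(y_0,\ldots,y_p)\in\mathrm{N}^r_p(X)$, and all of its $2$-faces $(y_0,y_k,y_{k+1})$ for $k<p$ lie in $\mathrm{N}^r_2(X)$. Iterating the resulting $2$-simplex relations yields by induction on $p$ that $e_1\circ\cdots\circ e_p$ equals the edge-path class of $(y_0,y_p)$, which depends only on the endpoints $y_0=s(e_p)$ and $y_p=t(e_1)$.

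For $\psi$, given a $1$-simplex $\sigma=(x_0,x_1)\in\mathrm{N}^r_1(X)$, I choose a directed path $x_0=z_0,z_1,\ldots,z_n=x_1$ of length $n=d(x_0,x_1)\le r$ (the empty path, interpreted as the identity $(x_0,x_0)$, when $n=0$) and set $\psi(\sigma)=(z_{n-1},z_n)\circ\cdots\circ(z_0,z_1)\in\mathcal{G}^r(X)$. Independence from the choice of path is precisely condition (5). To see that $\psi$ respects the $2$-simplex relations, take $(x_0,x_1,x_2)\in\mathrm{N}^r_2(X)$ and concatenate chosen minimising directed paths $x_0\to x_1$ and $x_1\to x_2$ into a walk of total length $d(x_0,x_1)+d(x_1,x_2)\le r$; a second application of condition (5) then equates $\psi(x_1,x_2)\circ\psi(x_0,x_1)$ with $\psi(x_0,x_2)$.

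Finally, $\psi\circ\phi$ is the identity on each generator $e\in\widetilde{E}(X)$ (take the path of length at most $1$ supplied by $e$ itself), and $\phi\circ\psi$ is the identity on each $1$-simplex $(x_0,x_1)$ by the same inductive computation used in the verification of (5) for $\phi$. Naturality in $X$ is clear because a map of directed graphs sends $\widetilde{E}(X)$ into $\widetilde{E}(Y)$ and directed walks to directed walks, and both $\phi$ and $\psi$ are defined through these data. The main obstacle I anticipate is purely bookkeeping: making sure that every intermediate $2$-simplex invoked in the induction actually lies inside $\mathrm{N}^r(X)$. The bound $L\le r$ defining $\mathrm{N}^r$ mirrors the bound $p,q\le r$ in condition (5); each direction of the isomorphism uses one side of this correspondence, which is what makes the two presentations match.
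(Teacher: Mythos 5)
The paper does not actually prove this proposition: it is quoted verbatim from Di--Ivanov--Mukoseev--Zhang \cite[Proposition 2.2]{DIMZ}, so there is no in-paper argument to compare against. Your proof is correct and is essentially the expected direct comparison of presentations: the edge-path groupoid of $\mathrm{N}^r(X)$ is generated by $1$-simplices (pairs at quasidistance at most $r$) subject to the $2$-simplex relations, while $\mathcal{G}^r(X)$ is generated by $\widetilde{E}(X)$ subject to condition (5), and your functors $\phi$ and $\psi$ translate one set of generators and relations into the other. The two substantive points are exactly the ones you isolate: that a composable string $e_1\circ\cdots\circ e_p$ with $p\le r$ assembles into a $p$-simplex of $\mathrm{N}^r(X)$ whose iterated $2$-faces $(y_0,y_k,y_{k+1})$ remain in $\mathrm{N}^r(X)$ by \eqref{L} and the triangle inequality, so that the composite collapses to the class of $(y_0,y_p)$; and, conversely, that condition (5) is precisely what makes the choice of minimizing directed path in the definition of $\psi$ immaterial and what enforces the $2$-simplex relation $\psi(x_1,x_2)\circ\psi(x_0,x_1)=\psi(x_0,x_2)$ when $d(x_0,x_1)+d(x_1,x_2)\le r$. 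Two small points worth making explicit if you write this up: the target $(s(e_p),t(e_1))$ of the collapsed composite is indeed a $1$-simplex of $\mathrm{N}^r(X)$ because the walk itself witnesses $d(s(e_p),t(e_1))\le p\le r$; and in the degenerate cases ($x_0=x_2$, or zero-length paths) one invokes condition (5) with a single diagonal element $(x,x)$ as one of the two strings, which is permitted since $p,q\ge 1$ there. Neither affects the validity of the argument. Note also that the paper's proof of Theorem \ref{equivalence def} (the combinatorial model $\widetilde{\Pi}_1^r(X)\cong\mathcal{G}^r(X)$) follows the same two-mutually-inverse-functors template, so your argument is stylistically consistent with what the authors do elsewhere.
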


Recall that the fundamental groupoid of a directed graph $X$ defined by Grigor'yan, Jimenez, and Muranov \cite{GJM} is denoted by $\Pi_1^\mathrm{GJM}(X)$.

\begin{corollary}
  For a directed graph $X$, there is a natural isomorphism
  \[
    \Pi_1^2(X)\cong\Pi_1^\mathrm{GJM}(X).
  \]
\end{corollary}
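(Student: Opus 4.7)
The plan is to combine Proposition \ref{algebric description} with a direct comparison of algebraic presentations. By the algebraic description, it suffices to exhibit a natural isomorphism $\mathcal{G}^2(X)\cong\Pi_1^\mathrm{GJM}(X)$, and this reduces to matching generators and relations.

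First, I would recall the explicit presentation of $\Pi_1^\mathrm{GJM}(X)$ from \cite{GJM}: it has $V(X)$ as objects, and its morphisms are generated by the formal edges $e\colon s(e)\to t(e)$ together with their inverses (and identities at each vertex), modulo the triangle and square relations arising from the $C$-homotopy---that is, any two composites of at most two edges (or edge inverses) sharing source and target are equal. Observe that these are exactly the generators $\widetilde{E}(X)=E(X)\cup\Delta$ together with formal inverses, and the relations listed in items (1)--(5) of Definition \ref{G(X)} with $r=2$: items (1)--(4) produce the free groupoid on $\widetilde{E}(X)$ with the stipulated identities, and item (5) for $1\le p,q\le 2$ encodes precisely the triangle relations (the case $p=1,q=2$ and its reverse) and the square relations (the case $p=q=2$).

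Next I would construct the comparison functor. Define $F\colon\mathcal{G}^2(X)\to\Pi_1^\mathrm{GJM}(X)$ as the identity on objects and by sending each generator $e\in\widetilde{E}(X)$ to the corresponding generator in $\Pi_1^\mathrm{GJM}(X)$. To see this is well-defined, I need to check that the defining relations of $\mathcal{G}^2(X)$ hold in $\Pi_1^\mathrm{GJM}(X)$, which is immediate from the matching above. Conversely, the inverse functor is constructed symmetrically, using the universal property of $\Pi_1^\mathrm{GJM}(X)$ as a presented groupoid: one sends each generating edge to its image in $\mathcal{G}^2(X)$ and verifies that the triangle and square relations hold in $\mathcal{G}^2(X)$, which again is exactly item (5) with $r=2$. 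The two constructions are mutually inverse on generators and objects, hence define an isomorphism of groupoids, and naturality in $X$ is automatic since both constructions are functorial in generators and relations.

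The main (only) obstacle is bookkeeping: verifying that the presentation of $\Pi_1^\mathrm{GJM}(X)$ given in \cite{GJM} (formulated there via $C$-homotopy classes of formal paths) truly reduces to exactly the generators $\widetilde{E}(X)$ with formal inverses and the relations (1)--(5) with $r=2$. Once this identification is made, the corollary is immediate from Proposition \ref{algebric description}. No substantive argument beyond a careful translation of definitions is required.
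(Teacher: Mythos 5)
Your argument follows the paper's route exactly: both reduce, via Proposition \ref{algebric description}, to the identification $\mathcal{G}^2(X)\cong\Pi_1^{\mathrm{GJM}}(X)$, which the paper simply cites as \cite[Theorem 2.4]{GJM} and which you propose to reverify by matching the two presentations. One small caution if you carry out the bookkeeping: the triangle and square relations in $\Pi_1^{\mathrm{GJM}}(X)$ equate composites of at most two \emph{forward} edges only, exactly as in condition (5) of Definition \ref{G(X)} with $r=2$, so your parenthetical allowing edge inverses in those relations overstates them (e.g.\ it would force $e_1\circ e_2^{-1}=f_1\circ f_2^{-1}$ for squares with two sources and two sinks, which is not a $C$-homotopy) and should be dropped.
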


\begin{proof}
  It is shown in \cite[Theorem 2.4]{GJM} that $\mathcal{G}^2(X)$ is naturally isomorphic to $\Pi_1^\mathrm{GJM}(X)$. Then the statement follows from Proposition \ref{algebric description}.
\end{proof}

\begin{corollary}
  For a directed graph $X$, each functor in the canonical sequence
  \begin{equation}
    \label{sequence Pi}
    \Pi_1^1(X)\to\Pi_1^2(X)\to\Pi_1^3(X)\to\cdots
  \end{equation}
  is the identity map on objects and surjective on hom-sets.
\end{corollary}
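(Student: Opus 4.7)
The plan is to reduce the statement directly to the algebraic description $\Pi_1^r(X)\cong\mathcal{G}^r(X)$ established in Proposition \ref{algebric description}, after which the assertion becomes an inspection of the presentation in Definition \ref{G(X)}.

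First I would observe that Definition \ref{G(X)} makes manifest the following uniformities across $r$: every $\mathcal{G}^r(X)$ has the same object set $V(X)$, and is generated as a groupoid by the same collection of morphisms, namely those associated to elements of $\widetilde{E}(X)$ together with their formal inverses. The only ingredient of the presentation that depends on $r$ is the relation~(5), which is imposed for composites of length $1\le p,q\le r$. Consequently, enlarging $r$ to $r+1$ strictly enlarges the set of imposed relations, so the assignment sending each generator of $\mathcal{G}^r(X)$ to the corresponding generator of $\mathcal{G}^{r+1}(X)$ extends to a well-defined quotient functor $\mathcal{G}^r(X)\to\mathcal{G}^{r+1}(X)$. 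By construction this functor is the identity on objects, and surjectivity on hom-sets is immediate from Definition \ref{G(X)}(4): every morphism in $\mathcal{G}^{r+1}(X)$ is a finite composite of generators, and the same composite already exists in $\mathcal{G}^r(X)$ and maps onto it.

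The only point that really needs verification is that, under the natural isomorphism of Proposition \ref{algebric description}, the canonical functor $\Pi_1^r(X)\to\Pi_1^{r+1}(X)$ induced by the simplicial inclusion $\mathrm{N}^r(X)\hookrightarrow\mathrm{N}^{r+1}(X)$ corresponds to the quotient functor $\mathcal{G}^r(X)\to\mathcal{G}^{r+1}(X)$ described above. I do not expect this to be a genuine obstacle: it should follow from naturality of the isomorphism in Proposition \ref{algebric description} in the directed graph variable, applied to the identity map $X\to X$ viewed as a map of directed graphs whose induced simplicial maps are precisely the inclusions $\mathrm{N}^r(X)\hookrightarrow\mathrm{N}^{r+1}(X)$, together with the explicit description of the isomorphism on generators $\widetilde{E}(X)$.
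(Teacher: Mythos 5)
Your proposal is correct and follows essentially the same route as the paper: define the functor $\mathcal{G}^r(X)\to\mathcal{G}^{r+1}(X)$ on generators, observe surjectivity on hom-sets from condition (4) of Definition \ref{G(X)}, and identify it with the canonical functor via the explicit form of the isomorphism in Proposition \ref{algebric description} on the generators $\widetilde{E}(X)$. One small caveat: naturality of that isomorphism in the graph variable $X$ applied to the identity map does not by itself give the compatibility across different values of $r$ (the identity of $X$ induces the identity of each $\mathrm{N}^r(X)$, not the inclusion $\mathrm{N}^r(X)\hookrightarrow\mathrm{N}^{r+1}(X)$), but the generator-level check you also invoke is exactly what the paper uses and suffices.
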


\begin{proof}
  There is a functor $\mathcal{G}^r(X)\to\mathcal{G}^{r+1}(X)$ which is the identity map on objects and sends a morphism of $\mathcal{G}^r(X)$ represented by $e\in\widetilde{E}(X)$ to that of $\mathcal{G}^{r+1}(X)$ represented by $e$. Clearly, this functor is surjective on hom-sets, and the proof of Proposition \ref{algebric description} implies that it is identified with the canonical functor $\Pi_1^r(X)\to\Pi_1^{r+1}(X)$ induced from the inclusion $\mathrm{N}^r(X)\to\mathrm{N}^{r+1}(X)$. Thus the statement is proved.
\end{proof}

By Lemma \ref{colim} and Theorem \ref{equivalence def}, we get:

\begin{corollary}
  For a directed graph $X$, there is a natural isomorphism
  \[
    \Pi_1^\infty(X)\cong\underset{r\to\infty}{\mathrm{colim}}\,\mathcal{G}^r(X).
  \]
\end{corollary}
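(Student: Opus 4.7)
The plan is simply to chain together the two main ingredients already at our disposal: the colimit presentation of $\Pi_1^\infty(X)$ from Lemma \ref{colim}, and the algebraic identification of each $\Pi_1^r(X)$ with $\mathcal{G}^r(X)$ from Proposition \ref{algebric description}.

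First, Lemma \ref{colim} supplies a natural isomorphism
\[
  \Pi_1^\infty(X)\cong\underset{r\to\infty}{\mathrm{colim}}\,\Pi_1^r(X).
\]
Next, Proposition \ref{algebric description} provides, for each $1\le r<\infty$, a natural isomorphism $\Pi_1^r(X)\cong\mathcal{G}^r(X)$. The key compatibility to verify is that these isomorphisms intertwine the transition functors on the two sides, i.e.\ the canonical functors $\Pi_1^r(X)\to\Pi_1^{r+1}(X)$ induced from the inclusions $\mathrm{N}^r(X)\subset\mathrm{N}^{r+1}(X)$ and the algebraic functors $\mathcal{G}^r(X)\to\mathcal{G}^{r+1}(X)$ sending the class of $e\in\widetilde{E}(X)$ to itself. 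This compatibility is precisely what is recorded in the preceding corollary, so the natural isomorphisms assemble into an isomorphism of sequential diagrams of groupoids.

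Since taking the colimit of a sequence in the category of groupoids preserves isomorphisms of diagrams, we obtain
\[
  \underset{r\to\infty}{\mathrm{colim}}\,\Pi_1^r(X)\cong\underset{r\to\infty}{\mathrm{colim}}\,\mathcal{G}^r(X),
\]
and composing with the isomorphism of Lemma \ref{colim} yields the desired natural isomorphism. No real obstacle arises; the corollary is essentially a bookkeeping step that transports the finite-$r$ algebraic description through the colimit to obtain one for $r=\infty$.
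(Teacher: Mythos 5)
Your proposal is correct and matches the paper's own (one-line) argument: the paper likewise obtains the corollary by combining Lemma \ref{colim} with the identification $\Pi_1^r(X)\cong\mathcal{G}^r(X)$ for finite $r$, the compatibility of transition functors being exactly the point recorded in the preceding corollary. You simply spell out the bookkeeping that the paper leaves implicit.
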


\begin{remark}
  The groupoid $\underset{r\to\infty}{\mathrm{colim}}\,\mathcal{G}^r(X)$ is explicitly given as follows. Let $\mathcal{F}(X)$ denote the groupoid defined by the conditions (1), (2), (3) and (4) of Definition \ref{G(X)}. Then $\underset{r\to\infty}{\mathrm{colim}}\,\mathcal{G}^r(X)$ is the groupoid obtained by quotienting out $\mathcal{F}(X)$ by the condition (5) of Definition \ref{G(X)} for all $1\le r<\infty$.
\end{remark}

%%%%% Section 3 %%%%%

\section{Combinatorial description}\label{Combinatorial description}

In this section, we give a combinatorial description of the $r$-fundamental groupoid, and as its applications, we show the homotopy invariance and the product formula of the $r$-fundamental groupoid.

%%%%% Subsection 3.1 %%%%%

\subsection{Path}

We define a path in a directed graph and a $C_r$-homotopy between two paths. Let $[n]=\{0,1,\ldots,n\}$ for $n\ge 0$. Let $\I_0$ be a singleton consisting of a directed graph with a single vertex $0$. For $n\ge 1$, let $\I_n$ denote the set of all directed graphs with vertex set $[n]$ having exactly one edge $(i,i+1)$ or $(i+1,i)$ for each $i=0,1,\ldots,n-1$ and no other edges. Namely, $\I_n$ is the set of directed graphs whose underlying undirected graphs are a path graph with $n$ edges.

\begin{definition}
  A \emph{path} in a directed graph $X$ is a map $I_n\to X$ for some $I_n\in\I_n$ with $n\ge 0$.
\end{definition}

We say that a path in a directed graph is \emph{reduced} if it is given by a homomorphism. Observe that every reduced path is determined by its end points and image. Then we often identify a reduced path with its image. In particular, we often consider an edge of a directed graph as a reduced path. We say that a path is a \emph{loop} if its initial and terminal points are the same. Let $\vec{I}_n$ denote the special element of $\I_n$ such that
\[
  E(\vec{I}_n)=\{(i,i+1)\mid i=0,1,\ldots,n-1\}.
\]

\begin{definition}
  A \emph{directed path} in a directed graph $X$ is a path $\vec{I}_n\to X$.
\end{definition}

Let $I_m\in\I_m$ and $I_n\in\I_n$. We define an element $I_m\cdot I_n$ of $\I_{m+n}$ by
\[
  E(I_m\cdot I_n)=E(I_m)\cup\{(x+m,y+m)\mid(x,y)\in E(I_n)\}.
\]
Let $f\colon I_m\to X$ and $g\colon I_n\to X$ be paths in a directed graph $X$ satisfying $f(m)=g(0)$. We define the concatenation of paths $f$ and $g$ by the path $f\cdot g\colon I_m\cdot I_n\to X$ such that
\[
  (f\cdot g)(i)=
  \begin{cases}
    f(i)&i=0,1,\ldots,m\\
    g(i-m)&i=m,m+1,\ldots,m+n.
  \end{cases}
\]
Clearly, the concatenation $f\cdot g$ is a well-defined path in $X$. For a vertex $x$ of a directed graph $X$, let $c_x\colon I_0\to X$ be a map such that $c_x(0)=x$. The following properties of the concatenation of paths are immediate from the definition.

\begin{lemma}
  \label{concatenation}
  Let $f,g,h$ be paths in a directed graph $X$, and let $\varphi\colon X\to Y$ be a map of directed graphs. Then the following holds.

  \begin{enumerate}
    \item $c_x\cdot f=f=f\cdot c_y$, where $f$ is a path from $x$ to $y$.

    \item $f\cdot(g\cdot h)=(f\cdot g)\cdot h$.

    \item $\varphi\circ(f\cdot g)=(\varphi\circ f)\cdot(\varphi\circ g)$.
  \end{enumerate}
\end{lemma}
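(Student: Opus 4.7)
The plan is to verify the three claims by direct inspection of the definitions, since each is formal. The only thing to be careful about is that the concatenation $I_m \cdot I_n$ of indexing graphs has been defined on the nose (not up to isomorphism), so what is being asserted in (1) and (2) is an equality of paths $I_k \to X$ for specific $I_k \in \I_k$, and we must check both the underlying indexing graph and the vertex assignment.

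For (1), I would first observe that $I_0 \cdot I_m = I_m$ and $I_m \cdot I_0 = I_m$ as elements of $\I_m$: since $E(\I_0) = \emptyset$, the defining formula for the edge set of $I_0 \cdot I_m$ collapses to $\{(x,y) \mid (x,y) \in E(I_m)\} = E(I_m)$, and the case $I_m \cdot I_0$ is analogous. Then by the piecewise definition of $c_x \cdot f$, we get $(c_x \cdot f)(0) = c_x(0) = x = f(0)$ and $(c_x \cdot f)(i) = f(i)$ for $1 \le i \le m$, so $c_x \cdot f = f$, and similarly for $f \cdot c_y$.

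For (2), I would check that both $I_l \cdot (I_m \cdot I_n)$ and $(I_l \cdot I_m) \cdot I_n$ are the element of $\I_{l+m+n}$ whose edge set is
\[
  E(I_l) \cup \{(x+l, y+l) \mid (x,y) \in E(I_m)\} \cup \{(x+l+m, y+l+m) \mid (x,y) \in E(I_n)\};
\]
this is a direct unfolding of the definition of $\cdot$ applied twice, once inside and once outside. Then the vertex assignments of $f \cdot (g \cdot h)$ and $(f \cdot g) \cdot h$ both send $i \in [l+m+n]$ to $f(i)$, $g(i-l)$, or $h(i-l-m)$ according as $i$ lies in $[0,l]$, $[l, l+m]$, or $[l+m, l+m+n]$, with the overlaps being consistent by the matching-endpoint hypothesis $f(l) = g(0)$ and $g(m) = h(0)$.

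For (3), since the underlying indexing graph of both sides is the same $I_m \cdot I_n$, it is enough to evaluate on each vertex $i \in [m+n]$: if $0 \le i \le m$ then $(\varphi \circ (f \cdot g))(i) = \varphi(f(i)) = (\varphi \circ f)(i) = ((\varphi \circ f) \cdot (\varphi \circ g))(i)$, and if $m \le i \le m+n$ then both sides equal $\varphi(g(i-m)) = (\varphi \circ g)(i-m)$. There is no real obstacle in this lemma; it is pure bookkeeping that serves to legitimize treating $\cdot$ as an associative, unital operation in later proofs.
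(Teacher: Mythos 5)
Your proposal is correct and matches the paper, which simply declares these properties immediate from the definitions; your write-up just spells out the routine verification of the indexing graphs and vertex assignments. Nothing is missing and nothing diverges from the intended argument.
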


We define a $C_r$-homotopy of paths in a directed graph for $0\le r<\infty$. Let $\Gamma_0$ be the directed graph with a single vertex $u_0=v_0$, and let $\Gamma_1$ be the directed graph with $V(\Gamma_1)=\{u_0=v_0,u_1=v_1\}$ and $E(\Gamma_1)=\{(u_0,u_1)\}$. For $r\ge 2$, let $\Gamma_r$ be the directed graph with
\begin{align*}
  V(\Gamma_r)&=\{u_0=v_0,u_1,v_1,u_2,v_2,\ldots,u_{r-1},v_{r-1},u_r=v_r\}\\
  E(\Gamma_r)&=\{(u_i,u_{i+1}),(v_i,v_{i+1})\mid i=0,1,\ldots,r-1\}.
\end{align*}
Namely, for $r\ge 2$, $\Gamma_r$ is depicted as follows.

\begin{figure}[H]
  \label{Gamma_n}
  \centering
  \begin{tikzpicture}[x=5mm, y=5mm, thick]
    \draw[myarrow=.6](-0.2,0)--(1,2);
    \draw[myarrow=.6](1,2)--(3,2);
    \draw(3,2)--(4,2);
    \draw[dashed](4,2)--(6,2);
    \draw(6,2)--(7,2);
    \draw[myarrow=.6](7,2)--(9,2);
    \draw[myarrow=.6](9,2)--(10.2,0);
    \draw[myarrow=.6](-0.2,0)--(1,-2);
    \draw[myarrow=.6](1,-2)--(3,-2);
    \draw(3,-2)--(4,-2);
    \draw[dashed](4,-2)--(6,-2);
    \draw(6,-2)--(7,-2);
    \draw[myarrow=.6](7,-2)--(9,-2);
    \draw[myarrow=.6](9,-2)--(10.2,0);
    \fill[black](-0.2,0) node[left]{$u_0=v_0$} circle(2pt);
    \fill[black](1,2) node[above]{$u_1$} circle(2pt);
    \fill[black](3,2) node[above]{$u_2$} circle(2pt);
    \fill[black](7,2) node[above]{$u_{r-2}$} circle(2pt);
    \fill[black](9,2) node[above]{$u_{r-1}$} circle(2pt);
    \fill[black](10.2,0) node[right]{$u_r=v_r$} circle(2pt);
    \fill[black](1,-2) node[below]{$v_1$} circle(2pt);
    \fill[black](3,-2) node[below]{$v_2$} circle(2pt);
    \fill[black](7,-2) node[below]{$v_{r-2}$} circle(2pt);
    \fill[black](9,-2) node[below]{$v_{r-1}$} circle(2pt);
  \end{tikzpicture}
  \caption{The directed graph $\Gamma_r$ for $r\ge 2$}
\end{figure}

\noindent We say that a reduced path in $\Gamma_r$ ($2\le r<\infty$) is clockwise if it is so with respect to the above picture. Let $x$ be a vertex of $\Gamma_r$. For $r=0$, let $\rho_x\colon I\to\Gamma_0$ denote the constant map for $I\in\mathcal{I}_1$, where $\mathcal{I}_1$ consists of two elements $\vec{I}_1$ and the directed graph obtained by reversing the edge of $\vec{I}_1$. For $r=1$, let $\rho_x\colon I_2\to\Gamma_1$ with $I_2\in\I_2$ be the unique reduced path with $\rho_x(0)=x=\rho_x(2)$. For $r\ge 2$, let $\rho_x\colon I_{2r}\to\Gamma_r$ with $I_{2r}\in\I_{2r}$ be the unique clockwise reduced path with $\rho_x(0)=x=\rho_x(2r)$.

\begin{definition}
  \label{n-homotopy def}
  Let $f$ and $g$ be paths in a directed graph $X$. For $0\le r<\infty$, we say that $f$ is \emph{$C_r$-homotopic} to $g$, denoted $f\to_rg$, if either of the following conditions holds.

  \begin{enumerate}
    \item $f=g$.

    \item $f=f_1\cdot f_2$ and $g=f_1\cdot(h\circ\rho_x)\cdot f_2$ for some paths $f_1,f_2$ in $X$ and a map $h\colon \Gamma_s\to X$ with $s=0$ or $r$, where $x$ is a vertex of $\Gamma_r$ such that $h(x)$ is the end point of $f_1$.
  \end{enumerate}
\end{definition}

Note that if paths $f$ and $g$ in a directed graph are $C_r$-homotopic, then their initial and terminal points are respectively common. If there is a finite zig-zag of $C_r$-homotopies between paths $f$ and $g$, then we write $f\approx_rg$ and also say that $f$ and $g$ are $C_r$-homotopic. We show the basic properties of $C_r$-homotopies.

\begin{lemma}
  \label{C_r-homotopy}
  Let $f_1,f_2,g_1,g_2$ be paths in a directed graph $X$, and let $\varphi\colon X\to Y$ be a map of directed graphs. Then for $0\le r<\infty$, the following hold.

  \begin{enumerate}
    \item The relation $\approx_r$ is an equivalence relation.

    \item If $f_1\approx_rf_2$ and $g_1\approx_rg_2$, then $f_1\cdot g_1\approx_rf_2\cdot g_2$.

    \item If $f_1\approx_rg_1$, then $\varphi\circ f_1\approx_r\varphi\circ g_1$.
  \end{enumerate}
\end{lemma}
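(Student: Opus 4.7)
The plan is to reduce all three statements to the atomic step encoded by Definition~\ref{n-homotopy def}(2), using the fact that $\approx_r$ is by construction the equivalence relation generated by $\to_r$. Once each property is verified for a single application of $\to_r$, extension to a finite zig-zag is automatic. In particular, (1) is essentially formal: reflexivity is clause (1) of Definition~\ref{n-homotopy def}, symmetry is built into the zig-zag formulation of $\approx_r$, and transitivity is the concatenation of two zig-zags.

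For (2), I would first treat the case $f_1\to_r f_2$ with $g_1=g_2$. Writing $f_1=a\cdot b$ and $f_2=a\cdot(h\circ\rho_x)\cdot b$ for some paths $a,b$ and a map $h\colon\Gamma_s\to X$ with $s\in\{0,r\}$, the associativity of concatenation (Lemma~\ref{concatenation}(2)) gives
\[
  f_1\cdot g_1 = a\cdot(b\cdot g_1),\qquad f_2\cdot g_1 = a\cdot(h\circ\rho_x)\cdot(b\cdot g_1),
\]
exhibiting $f_1\cdot g_1\to_r f_2\cdot g_1$ with the \emph{same} witnessing map $h$. Symmetrically, $g_1\to_r g_2$ yields $f_2\cdot g_1\to_r f_2\cdot g_2$. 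Given general zig-zags $f_1\approx_r f_2$ and $g_1\approx_r g_2$, applying these two steps alternately produces a zig-zag from $f_1\cdot g_1$ to $f_2\cdot g_2$.

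For (3), the atomic step uses Lemma~\ref{concatenation}(3): if $f_1\to_r g_1$ via $g_1=a\cdot(h\circ\rho_x)\cdot b$ and $f_1=a\cdot b$, then compatibility of $\varphi$ with concatenation gives
\[
  \varphi\circ f_1=(\varphi\circ a)\cdot(\varphi\circ b),\qquad \varphi\circ g_1=(\varphi\circ a)\cdot((\varphi\circ h)\circ\rho_x)\cdot(\varphi\circ b).
\]
Since $\varphi\circ h\colon\Gamma_s\to Y$ is again a map of directed graphs, this is precisely a $\to_r$-step in $Y$, and chaining through the zig-zag yields the conclusion for $\approx_r$. No step looks genuinely difficult, since everything is a formal consequence of Definition~\ref{n-homotopy def} and Lemma~\ref{concatenation}; the only detail worth flagging is to confirm that the index $s\in\{0,r\}$ is preserved at each stage, which is clear because the witnessing map $h$ is reused verbatim in all of the manipulations above.
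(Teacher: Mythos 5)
Your proof is correct and matches the paper's intent exactly: the paper dismisses this lemma with the single word ``Immediate,'' and your argument is precisely the routine verification being elided --- reduce each claim to a single $\to_r$-step, reuse the witnessing map $h\colon\Gamma_s\to X$ (resp.\ push it forward to $\varphi\circ h\colon\Gamma_s\to Y$), and chain zig-zags. Nothing further is needed.
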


\begin{proof}
  Immediate.
\end{proof}

%For $r\ge 1$, we call an equivalence class of paths in a directed graph by the equivalence relation $\simeq_r$ the $C_r$-homotopy equivalence class.
Let $I_n\in\I_n$. We define $\bar{I}_n\in\I_n$ by reversing all edges of $I_n$. For a path $f\colon I_n\to X$, we define its inverse path $\bar{f}\colon\bar{I}_n\to X$ by $\bar{f}(i)=f(n-i)$ for $i=0,1,\ldots,n$.

\begin{lemma}
  \label{inverse}
  For any path $f\colon I_n\to X$ from $x$ to $y$ with $I_n\in\I_n$, we have
  \[
    f\cdot\bar{f}\approx_1c_x\quad\text{and}\quad\bar{f}\cdot f\approx_1c_y.
  \]
\end{lemma}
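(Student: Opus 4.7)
The plan is to prove $f\cdot\bar{f}\approx_1 c_x$ by induction on $n$, and then obtain $\bar{f}\cdot f\approx_1 c_y$ by applying the first claim to $\bar{f}$, which is a path from $y$ to $x$ satisfying $\bar{\bar{f}}=f$.

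For the base cases, $n=0$ is immediate since $f=\bar{f}=c_x$ and the concatenation $c_x\cdot c_x$ collapses to $c_x$. For $n=1$, the path $f\cdot\bar{f}$ traverses a single edge of $X$ and then its reverse. I would define $h\colon\Gamma_1\to X$ by sending the edge $(u_0,u_1)$ of $\Gamma_1$ to that edge of $X$, and then identify $f\cdot\bar{f}$ with $h\circ\rho_z$ for $z\in\{u_0,u_1\}$, the choice being dictated by whether the unique edge of $I_1\in\I_1$ is $(0,1)$ or $(1,0)$ (so that the domain of $\rho_z$ matches the domain of $f\cdot\bar{f}$). Applying Definition \ref{n-homotopy def}(2) with $s=1$ and $f_1=f_2=c_x$ then yields the single elementary $C_1$-homotopy $c_x\to_1 f\cdot\bar{f}$.

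For the inductive step with $n\ge 2$, I would peel off the last edge: write $f=f'\cdot e$, where $f'$ has length $n-1$ and $e$ is a length-$1$ path ending at $y$. A direct edge-set calculation gives the identification of paths $\bar{f}=\bar{e}\cdot\bar{f'}$. Then, by associativity of concatenation (Lemma \ref{concatenation}(2)), compatibility of $\approx_1$ with concatenation (Lemma \ref{C_r-homotopy}(2)), Lemma \ref{concatenation}(1), and the $n=1$ base case applied to the middle piece $e\cdot\bar{e}\approx_1 c_{f(n-1)}$,
\[
f\cdot\bar{f}=f'\cdot(e\cdot\bar{e})\cdot\bar{f'}\approx_1 f'\cdot c_{f(n-1)}\cdot\bar{f'}=f'\cdot\bar{f'},
\]
and the inductive hypothesis on $f'$ closes the argument.

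The only real obstacle is bookkeeping in the base step: one must pick $z=u_0$ or $z=u_1$ attentively so that $h\circ\rho_z$ agrees with $f\cdot\bar{f}$ as a path (not merely as a sequence of vertices in $X$), which requires tracing through the orientation of the unique edge of $I_1$. Beyond this minor case split, the inductive argument is a routine free-group-style cancellation powered by the $s=1$ insert move of Definition \ref{n-homotopy def}.
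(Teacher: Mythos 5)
Your proposal is correct and takes essentially the same route as the paper's proof: peel off the last edge, realize the middle cancellation $e\cdot\bar{e}$ as $h\circ\rho_{u_0}$ for a map $h\colon\Gamma_1\to X$, and induct on $n$ (the second equivalence then following symmetrically). The only point you gloss over is the degenerate case $f(n-1)=f(n)$, where there is no edge of $X$ to send $(u_0,u_1)$ to and $h$ must instead be taken constant --- a one-line fix that the paper handles explicitly.
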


\begin{proof}
  It is sufficient to prove the first equivalence because the proof for the second equivalence is quite similar. We prove the first equivalence by induction on $n$. For $n=0$, the statement is trivial, so we may assume $n\ge 1$. We consider the case $(f(n-1),f(n))\in E(X)$. Define a map $h\colon\Gamma_1\to X$ by $h(u_0)=f(n-1)$ and $h(u_1)=f(n)$. Then
  \begin{equation}
    \label{C_r contraction}
    f\cdot\bar{f}=f_{[n-1]}\cdot(h\circ\rho_{u_0})\cdot\overline{f_{[n-1]}}
  \end{equation}
  where $f_{[n-1]}=f\vert_{(I_n)_{[n-1]}}$, and so $f\cdot\bar{f}\to_1f_{[n-1]}\cdot\overline{f_{[n-1]}}$. Thus the induction proceeds. The case that $(f(n),f(n-1))\in E(X)$ is similarly proved. If $f(n-1)=f(n)$, then \eqref{C_r contraction} holds, where $h$ is the constant map to the vertex $f(n)$. Thus $f\cdot\bar{f}\to_1f_{[n-1]}\cdot\overline{f_{[n-1]}}$ too, so the induction proceeds. Therefore the statement follows.
\end{proof}

\begin{lemma}
  \label{m<n}
  Let $f$ and $g$ be paths in a directed graph $X$. If $1\le r\le s<\infty$, then $f\approx_rg$ implies $f\approx_sg$.
\end{lemma}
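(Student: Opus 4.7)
The plan is to reduce to a single elementary step: since $\approx_s$ is an equivalence relation, it suffices to prove that $f \to_r g$ implies $f \approx_s g$. The trivial case $f = g$ and the case in which the step uses a map out of $\Gamma_0$ (the $t = 0$ option in Definition \ref{n-homotopy def}) are immediate, since the same data witness a $C_s$-move. So the only nontrivial case is $g = f_1 \cdot (h \circ \rho_x) \cdot f_2$ for some $h \colon \Gamma_r \to X$ and a vertex $x$ of $\Gamma_r$ with $h(x)$ equal to the terminal vertex of $f_1$.

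The main construction is to extend $h$ along the obvious inclusion $\iota \colon \Gamma_r \hookrightarrow \Gamma_s$ (sending $u_i \mapsto u_i$ and $v_i \mapsto v_i$) to a map $\tilde h \colon \Gamma_s \to X$, by imposing $\tilde h \circ \iota = h$ and $\tilde h(u_i) = \tilde h(v_i) = h(u_r) = h(v_r)$ for every $r < i \le s$. Each edge $(u_i, u_{i+1})$ or $(v_i, v_{i+1})$ of $\Gamma_s$ with $i \ge r$ is sent to a single vertex, and on $\iota(\Gamma_r)$ the map agrees with $h$, so $\tilde h$ is a well-defined map of directed graphs. Since $\tilde h(\iota(x)) = h(x)$ equals the terminal vertex of $f_1$, inserting $\tilde h \circ \rho_{\iota(x)}$ at that point is a legal $C_s$-move, yielding $f = f_1 \cdot f_2 \to_s f_1 \cdot (\tilde h \circ \rho_{\iota(x)}) \cdot f_2$.

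A direct comparison of the vertex sequences of $\rho_x$ in $\Gamma_r$ and $\rho_{\iota(x)}$ in $\Gamma_s$ shows that $\tilde h \circ \rho_{\iota(x)}$ is obtained from $h \circ \rho_x$ by inserting $2(s - r)$ consecutive constant edges at the single vertex $h(u_r)$; these correspond to the segment of $\rho_{\iota(x)}$ that traverses the extra vertices $u_{r+1}, \ldots, u_s, v_{s-1}, \ldots, v_{r+1}$ of $\Gamma_s$. Each such insertion is realized by a single $C_0$-move (with $\Gamma_0 \to X$ sending the unique vertex to $h(u_r)$), which is permitted in $\approx_s$ for every $s$; hence $f_1 \cdot (\tilde h \circ \rho_{\iota(x)}) \cdot f_2 \approx_s f_1 \cdot (h \circ \rho_x) \cdot f_2 = g$. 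Combining with the $C_s$-move above yields $f \approx_s g$. The only mildly delicate part is the bookkeeping of where the constant edges appear, which depends on the position of $x$ in $\Gamma_s$; but because all extras collapse to the single vertex $h(u_r)$, no genuine obstacle arises.
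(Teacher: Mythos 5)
Your proof is correct and takes essentially the same route as the paper: your extension $\tilde h\colon\Gamma_s\to X$ is exactly the composite $h\circ\alpha$, where $\alpha\colon\Gamma_s\to\Gamma_r$ is the paper's collapse map sending $u_i,v_i$ to $u_r,v_r$ for $i\ge r$, and your ``insert $2(s-r)$ constant edges'' step is the paper's observation that $h\circ\rho_x$ and $h\circ\alpha\circ\rho_{y}$ are $C_0$-homotopic.
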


\begin{proof}
  We set
  \[
    \alpha(u_i)=
    \begin{cases}
      u_i&0\le i\le r-1\\
      u_r&r\le i\le s
    \end{cases}
    \quad\text{and}\quad
    \alpha(v_i)=
    \begin{cases}
      v_i&0\le i\le r-1\\
      v_r&r\le i\le s.
    \end{cases}
  \]
  Obviously, $\alpha$ is a surjective map from $\Gamma_s$ onto $\Gamma_r$. Hence, given a map $h\colon\Gamma_s\to X$ and a vertex $x$ of $\Gamma_s$, $h\circ\rho_x$ and $h\circ\alpha\circ\rho_{\alpha(x)}$ are $C_0$-homotopic, and thus the statement follows.
\end{proof}

Let $f$ and $g$ be paths in a directed graph $X$. Recall from \cite{GLMY2} that $f$ is $C$-homotopic to $g$ if there are paths $f_1,f_2,h$ in $X$ such that $f=f_1\cdot f_2$ and $g=f_1\cdot h\cdot f_2$, where $h$ is the constant map or the inclusion of either of the following loops.

\begin{figure}[htbp]
  \centering
  \begin{tikzpicture}[x=5mm, y=5mm, thick]
    \draw[myarrow=.6](-3,1) to [out=140,in=40] (-6,1);
    \draw[myarrow=.6](-6,1) to [out=-40,in=-140] (-3,1);
    \fill[black](-3,1) circle(2pt);
    \fill[black](-6,1) circle(2pt);
    \draw[myarrow=.6](0,0)--(3,0);
    \draw[myarrow=.6](1.5,2)--(0,0);
    \draw[myarrow=.6](1.5,2)--(3,0);
    \fill[black](1.5,2) circle(2pt);
    \fill[black](0,0) circle(2pt);
    \fill[black](3,0) circle(2pt);
    \draw[myarrow=.6](6,0)--(9,0);
    \draw[myarrow=.6](9,0)--(9,2);
    \draw[myarrow=.6](6,0)--(6,2);
    \draw[myarrow=.6](6,2)--(9,2);
    \fill[black](6,0) circle(2pt);
    \fill[black](9,0) circle(2pt);
    \fill[black](9,2) circle(2pt);
    \fill[black](6,2) circle(2pt);
  \end{tikzpicture}
  \caption{$C$-homotopies}
\end{figure}

\noindent Observe that if two paths in a directed graph with common end points have the same image, then they are $C_1$-homotopic, hence $C_2$-homotopic by Lemma \ref{m<n}. Clearly, a single point and the above loops are exactly the images of maps from $\Gamma_2$. Then we get:

\begin{lemma}
  \label{C-homotopy}
  Two paths in a directed graph are $C$-homotopic if and only if they are $C_2$-homotopic.
\end{lemma}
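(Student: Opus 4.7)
The plan is to derive both implications from the observation immediately preceding the lemma (two paths in a directed graph with the same endpoints and the same image are $C_1$-homotopic, hence $C_2$-homotopic by Lemma \ref{m<n}), together with the combinatorial fact that the possible images of a map $\Gamma_s\to X$ for $s\in\{0,2\}$ are exactly a single vertex and the three loops (bigon, triangle, square) listed as generators of $C$-homotopy. This lets me match $C$-homotopy insertions with $C_2$-homotopy insertions image-for-image.

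For the forward direction, I would take a $C$-homotopy step $f_1\cdot f_2\to_C f_1\cdot h\cdot f_2$, where $h$ is a constant map or the inclusion of a bigon, triangle, or square in $X$. In each case I construct a companion map $h'\colon\Gamma_s\to X$ (with $s=0$ when $h$ is constant and $s=2$ otherwise) whose image in $X$ coincides with the image of $h$: for the bigon, identify the pair $u_1,v_1$ in $\Gamma_2$; for the triangle, further collapse one of them onto the basepoint $u_0=v_0$; for the square, take $h'$ injective. The loops $h$ and $h'\circ\rho_{u_0}$ then share basepoint and image, so they are $C_1$-homotopic by the observation, and Lemma \ref{C_r-homotopy}(2) upgrades this to $f_1\cdot h\cdot f_2\approx_2 f_1\cdot(h'\circ\rho_{u_0})\cdot f_2$. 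Since $f_1\cdot(h'\circ\rho_{u_0})\cdot f_2$ arises from $f_1\cdot f_2$ by a single $C_2$-step, this closes one direction.

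The backward direction is the symmetric argument: given a $C_2$-step inserting $h\circ\rho_x$ for some $h\colon\Gamma_s\to X$, enumerate the possible images (single point, bigon, triangle, or square) and pick the matching standard $C$-homotopy inclusion $h'$ in $X$. Then $h\circ\rho_x\approx_1 h'$ by the observation, and Lemma \ref{C_r-homotopy}(2) propagates this to the full paths, placing $f_1\cdot(h\circ\rho_x)\cdot f_2$ in the $C$-homotopy class of $f_1\cdot h'\cdot f_2$, which differs from $f_1\cdot f_2$ by a single generating $C$-homotopy step. The step that requires the most care is verifying the enumeration of quotients of the four-cycle $\Gamma_2$ that underlies the paper's parenthetical claim: I would systematically walk through each pattern of identifications among the four vertices $\{u_0{=}v_0,u_1,u_2{=}v_2,v_1\}$, and for every collapse producing a tree-like image (such as a length-$2$ path, which occurs when $h(u_1)=h(v_1)$ but the endpoints remain distinct) observe that $h\circ\rho_x$ has the form $\gamma\cdot\bar\gamma$ and is thus $C_1$-homotopic to a constant loop by Lemma \ref{inverse}, reducing these degenerate cases to the single-point case before the observation is invoked.
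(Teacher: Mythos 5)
Your argument is the paper's own: the proof given there consists precisely of the observation that two paths with common endpoints and the same image are $C_1$-homotopic (hence $C_2$-homotopic) together with the identification of the possible images of maps $\Gamma_0,\Gamma_2\to X$ with a point and the three generating loops, and you simply carry out the image-for-image matching and the degenerate-case bookkeeping that the paper leaves implicit. The only configuration missing from your stated enumeration (and, in fact, from the paper's "exactly the images" claim) is a map $\Gamma_2\to X$ identifying $u_0$ with $u_2$ but not $u_1$ with $v_1$, whose image is two bigons sharing a vertex; there $h\circ\rho_x$ factors as a bigon loop concatenated with the reverse of another, so it is eliminated by two bigon moves in exactly the manner of your other cases.
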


%%%%% Subsection 3.3 %%%%%

\subsection{Combinatorial description}

We define the groupoid $\widetilde{\Pi}_1^r(X)$ of a directed graph $X$ combinatorially by using $C_r$-homotopies for $1\le r<\infty$.

\begin{definition}
  \label{groupoid def}
  For $1\le r<\infty$, we define the groupoid $\widetilde{\Pi}_1^r(X)$ for a directed graph $X$ as the groupoid whose objects are vertices of $X$ and the hom-set $\widetilde{\Pi}_1^{r}(X)(x,y)$ is the set of $C_r$-homotopy equivalence classes of paths from $x$ to $y$.
\end{definition}

Indeed, the groupoid $\widetilde{\Pi}_1^r(X)$ is well-defined by Lemmas \ref{concatenation}, \ref{C_r-homotopy}, \ref{inverse} and \ref{m<n}. Remark that as well as the usual fundamental groupoid, we employ the notation $f\cdot g$ for the composite $g\circ f$ of morphisms $f\colon x\to y$ and $g\colon y\to z$ in $\widetilde{\Pi}_1^r(X)$.

Now we prove:

\begin{theorem}
  \label{equivalence def}
  For $1\le r<\infty$, there is a natural isomorphism
  \[
    \widetilde{\Pi}_1^{r}(X)\cong\mathcal{G}^{r}(X).
  \]
\end{theorem}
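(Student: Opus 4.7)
The plan is to exhibit mutually inverse functors $\Phi\colon\mathcal{G}^{r}(X)\to\widetilde{\Pi}_1^{r}(X)$ and $\Psi\colon\widetilde{\Pi}_1^{r}(X)\to\mathcal{G}^{r}(X)$, both the identity on objects; naturality in $X$ will then be immediate from the constructions.

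To define $\Phi$, I send each generator $e\in\widetilde{E}(X)$ of $\mathcal{G}^{r}(X)$ to the length-one path $\hat{e}$ determined by $e$ (a constant path if $e\in\Delta$). It remains to verify that each of the five relations of Definition~\ref{G(X)} survives in $\widetilde{\Pi}_1^{r}(X)$. Relations (1)--(4) are straightforward consequences of Lemma~\ref{concatenation} and Lemma~\ref{inverse}, together with Lemma~\ref{m<n} to promote $C_1$-homotopies to $C_r$-homotopies. The substantive check is relation~(5): given $e_1,\dots,e_p,f_1,\dots,f_q\in\widetilde{E}(X)$ with $p,q\le r$, both composites defined in $\mathcal{G}^r$, $t(e_1)=t(f_1)$ and $s(e_p)=s(f_q)$, I must show $\hat{e_p}\cdot\hat{e_{p-1}}\cdots\hat{e_1}\approx_r\hat{f_q}\cdot\hat{f_{q-1}}\cdots\hat{f_1}$. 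The crucial point is that these data assemble into a map $h\colon\Gamma_r\to X$ whose $u$-side encodes $e_p,e_{p-1},\dots,e_1$ (padded by $r-p$ constant collapses at $t(e_1)$) and whose $v$-side encodes $f_q,\dots,f_1$ similarly; the clockwise loop $h\circ\rho_{u_0}$ is then, up to trivial constant insertions, the concatenation of the first path with the reverse of the second. Inserting this loop and applying Lemma~\ref{inverse} delivers the required $C_r$-homotopy.

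For $\Psi$, each path $f\colon I_n\to X$ decomposes into its $n$ edges, each contributing a generator of $\mathcal{G}^r(X)$ (if the edge of $I_n$ is forward-oriented) or its formal inverse (if reversed), and I set $\Psi(f)$ to be the composite. Compatibility with concatenation is immediate, so the content is to verify that $\Psi$ descends to $\approx_r$, which reduces to showing $\Psi(h\circ\rho_x)$ is an identity for every map $h\colon\Gamma_s\to X$ with $s\in\{0,r\}$ and every vertex $x$ of $\Gamma_s$. The case $s=0$ is trivial; for $s=r$ and $x=u_0$, the two sides of $\Gamma_r$ provide composable length-$r$ sequences in $\widetilde{E}(X)$ with common endpoints, so relation~(5) forces their composites to coincide, after which telescoping cancellations $b\circ b^{-1}=1$ collapse $\Psi(h\circ\rho_{u_0})$ to an identity. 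For the intermediate base vertices $x=u_j,v_j$ with $0<j<r$, the same argument applies after cyclically rotating $\rho_x$ and exploiting $e\circ e^{-1}=1$ to cancel the rotated prefix with its inverse.

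Once both functors are well-defined, $\Phi\Psi=\mathrm{id}$ and $\Psi\Phi=\mathrm{id}$ hold essentially by inspection: $\Psi\Phi$ sends each $e\in\widetilde{E}(X)$ to itself, and $\Phi\Psi$ sends the $C_r$-homotopy class of a path $f\colon I_n\to X$ to the literal concatenation of its edges, namely $f$. I expect the principal obstacle to be the bookkeeping in the verification of relation~(5) for $\Phi$---choosing $h\colon\Gamma_r\to X$ carefully in the mixed situations where $p$ or $q$ is strictly less than $r$ or where some $e_i,f_j$ lie in $\Delta$. This is precisely the structural heart of the theorem: the graph $\Gamma_r$ is tailor-made so that its clockwise loop witnesses the commutation relation~(5), and once this correspondence is set up correctly the remaining verifications become essentially formal.
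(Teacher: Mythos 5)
Your proposal is correct and follows essentially the same route as the paper: the paper's functor $G^{r}$ is your $\Phi$ (generators of $\mathcal{G}^r(X)$ go to length-one paths, with relation (5) witnessed by assembling the two composable strings into a map $\Gamma_r\to X$), and the paper's $F^{r}$ is your $\Psi$ (decompose a path into edges and check that $h\circ\rho_x$ maps to an identity via relation (5) and telescoping cancellations). Your explicit attention to the padding when $p,q<r$ and to base vertices other than $u_0$ is slightly more careful than the paper's write-up, but the argument is the same.
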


For composable morphisms $f,g$ of $\mathcal{G}^{r}(X)$, we set
\[
  f\cdot g=g\circ f
\]
as well as $\Pi_1^{r}(X)$. Now we define a functor
\[
  F^{r}\colon\widetilde{\Pi}_1^{r}(X)\to\mathcal{G}^{r}(X)
\]
by $F^r([f])=(f(0),f(1))$ and $F^r([\bar{f}])=(f(1),f(0))^{-1}$ for a reduced directed path $f\colon\vec{I}_1\to X$, that is, an edge.

\begin{lemma}
  The functor $F^{r}\colon\widetilde{\Pi}_1^{r}(X)\to\mathcal{G}^r(X)$ is well-defined.
\end{lemma}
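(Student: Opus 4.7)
My plan is to define $F^r$ by an explicit formula on paths, verify compatibility with concatenation, and reduce invariance under $C_r$-homotopy to a single identity in $\mathcal{G}^r(\Gamma_r)$ that follows from condition (5) of Definition \ref{G(X)}.

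For any directed graph $Z$ and any path $f\colon I_n\to Z$, set $\Phi(f)=\widetilde e_n\circ\cdots\circ\widetilde e_1\in\mathcal{G}^r(Z)$, where $\widetilde e_i=(f(i-1),f(i))$ if the $i$-th edge of $I_n$ points forward and $\widetilde e_i=(f(i),f(i-1))^{-1}$ otherwise (the constant path $c_x$ yields $\mathrm{id}_x$). By construction $\Phi(f\cdot g)=\Phi(f)\cdot\Phi(g)$. So to show that $F^r=\Phi$ descends to a well-defined functor on $\widetilde{\Pi}_1^r$, it is enough to check invariance under a single elementary $C_r$-homotopy step: $\Phi(h\circ\rho_x)=\mathrm{id}_{h(x)}$ for every map $h\colon\Gamma_s\to X$ with $s\in\{0,r\}$ and every vertex $x$ of $\Gamma_s$.

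Every map $\varphi\colon Y\to Z$ of directed graphs induces a functor $\varphi_*\colon\mathcal{G}^r(Y)\to\mathcal{G}^r(Z)$ sending a generator $(a,b)\in\widetilde E(Y)$ to $(\varphi(a),\varphi(b))\in\widetilde E(Z)$: relations (1)--(4) of Definition \ref{G(X)} are manifestly respected, and so is (5) because $\varphi$ preserves the source and target of a composable string of edges and cannot increase its length (it can only collapse some edges to diagonal elements, which are identities). The formula defining $\Phi$ gives $\Phi(h\circ\rho_x)=h_*(\Phi(\rho_x))$, so the question reduces to showing $\Phi(\rho_x)=\mathrm{id}_x$ inside $\mathcal{G}^r(\Gamma_s)$.

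The cases $s=0$ and $s=r=1$ are immediate: $\rho_x$ is then constant or of the form $e\cdot e^{-1}$. The case $s=r\ge 2$ is where all the content lies. Let
\[
  U=(u_{r-1},u_r)\circ\cdots\circ(u_0,u_1),\qquad L=(v_{r-1},v_r)\circ\cdots\circ(v_0,v_1)
\]
be the composites in $\mathcal{G}^r(\Gamma_r)$ along the upper and lower directed paths from $u_0=v_0$ to $u_r=v_r$. They have common source and common target and each uses $r$ generators, so condition (5) of Definition \ref{G(X)} with $p=q=r$ gives $U=L$. For an arbitrary vertex $x$ of $\Gamma_r$, splitting $U$ and $L$ at $x$ expresses $\Phi(\rho_x)$ as an alternating composite of the resulting pieces and their inverses; the equality $U=L$ forces this composite to collapse to $\mathrm{id}_x$. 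The only step with real content is the invocation of (5) with the maximal length $r$, which is exactly the relation distinguishing $\mathcal{G}^r$ from $\mathcal{G}^{r-1}$.
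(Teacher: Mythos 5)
Your proposal is correct and follows essentially the same route as the paper: reduce well-definedness to showing that the image of $h\circ\rho_x$ is the identity, and deduce this from condition (5) of Definition \ref{G(X)} applied to the two length-$r$ directed paths of $\Gamma_r$. The only difference is organizational — you factor the computation through the induced functor $h_*\colon\mathcal{G}^r(\Gamma_s)\to\mathcal{G}^r(X)$ and verify $\Phi(\rho_x)=\mathrm{id}_x$ once in $\mathcal{G}^r(\Gamma_r)$, whereas the paper writes out the same word directly in $\mathcal{G}^r(X)$ and cancels it there.
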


\begin{proof}
  Observe that every path is $0$-homotopic to a reduced path and every reduced path is a sequence of finitely many edges of $X$. Then the functor $F^{r}$ is well-defined whenever $F^{r}([f])=F^{r}([g])$ for paths $f,g$ in $X$ such that $f\to_rg$, that is, $f=g_1\cdot g_2$ and $g=g_1\cdot(h\circ\rho_x)\cdot g_2$ for some paths $g_1,g_2$ in $X$ and a map $h\colon\Gamma_r\to X$. Let $e_i$ and $f_i$ denote edges $(h(u_i),h(u_{i+1})$ and $(h(v_i),h(v_{i+1}))$ of $X$, respectively. If $x=u_i$, then
  \[
    F^r([h\circ\rho_x])=e_i\cdots e_{r-1}\cdot\overline{f_{r-1}}\cdots\overline{f_0}\cdot e_0\cdots e_{i-1}
  \]
  and so $F^{r}([h\circ\rho_x])=1_{h(x)}$. If $x=v_i$, then we can see $F^r([h\circ\rho_x])=1_{h(x)}$ quite similarly. Thus we obtain $F^{r}([f])=F^{r}([g])$.
\end{proof}

We also define a functor
\[
  G^{r}\colon\mathcal{G}^r(X)\to\widetilde{\Pi}_1^{r}(X)
\]
by $G^{r}(e)=[e]$ for $e\in E(X)$ and $G^{r}((x,x))=1_x$ for $x\in V(X)$.

\begin{lemma}
  The functor $G^{r}\colon\widetilde{\Pi}_1^{r}(X)\to\Pi_1^{r}(X)$ is well-defined.
\end{lemma}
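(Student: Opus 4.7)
The plan is to verify that the assignment defining $G^r$ respects all the relations in Definition~\ref{G(X)}; conditions (1), (3), and (4) are immediate from the definition of $G^r$ and from the fact that $\widetilde{\Pi}_1^r(X)$ is a groupoid, so only (2) and (5) need work. Condition (2) asserts $[e]\cdot[\bar{e}] = 1_x$ and $[\bar{e}]\cdot[e] = 1_y$ in $\widetilde{\Pi}_1^r(X)$ for each edge $e = (x,y)\in E(X)$, which is an immediate consequence of Lemma~\ref{inverse} combined with Lemma~\ref{m<n}.

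The main task is condition (5): given two directed paths $f$ and $g$ in $X$ of lengths $p,q\le r$ that share initial vertex $x$ and terminal vertex $y$, one must show $f\approx_r g$. (Degenerate $e_i=(z,z)\in\Delta$ contribute $1_z$ under $G^r$ and may be suppressed, so it suffices to consider honest directed paths.) To prove this, I would construct a single map $h\colon\Gamma_r\to X$ by setting
\[
  h(u_i) = f(\min(i,p)),\qquad h(v_i) = g(\min(i,q)).
\]
Since $p,q\le r$, every edge of $\Gamma_r$ is either sent to an actual edge of $X$ (along $f$ or along $g$) or collapsed to the vertex $y$, so $h$ is a well-defined map of directed graphs.

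The next step is to identify the clockwise loop $h\circ\rho_{u_0}$ with $f\cdot\bar{g}$ up to $C_r$-homotopy. By construction these two paths are the same sequence of vertices except that $h\circ\rho_{u_0}$ contains $2r-p-q+1$ consecutive copies of $y$ in the middle, while $f\cdot\bar{g}$ contains only one. Each extra repetition of $y$ can be removed by a single $C_0$-homotopy (deleting $h'\circ\rho_{u_0}$ for $h'\colon\Gamma_0\to X$ sending the sole vertex to $y$), and by Definition~\ref{n-homotopy def} every $C_0$-homotopy is a $C_r$-homotopy. Hence $h\circ\rho_{u_0}\approx_r f\cdot\bar{g}$.

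Finally, I would chain the $C_r$-homotopies. Inserting $h\circ\rho_{u_0}$ at the starting point of $g = c_x\cdot g$ is a single instance of the relation $\to_r$, and combining this with Lemma~\ref{C_r-homotopy}(2), associativity (Lemma~\ref{concatenation}(2)), and Lemma~\ref{inverse} yields
\[
  g \;\approx_r\; (h\circ\rho_{u_0})\cdot g \;\approx_r\; (f\cdot\bar{g})\cdot g \;=\; f\cdot(\bar{g}\cdot g) \;\approx_r\; f\cdot c_y \;=\; f.
\]
The principal obstacle is the construction of $h$ together with the identification $h\circ\rho_{u_0}\approx_r f\cdot\bar{g}$: this is the geometric content of the argument, and it is precisely the role that the graph $\Gamma_r$ is engineered to play.
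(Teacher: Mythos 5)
Your proof is correct and takes essentially the same approach as the paper: the paper's own (very terse) argument likewise observes that the two composable words of length at most $r$ assemble into a single map $\Gamma_r\to X$ --- the diagonal elements of $\widetilde{E}(X)$ playing the role of the collapsed edges that your $\min(i,p)$-padding makes explicit --- and concludes $e_1\cdots e_p\approx_r f_1\cdots f_q$ directly. You have simply filled in the details the paper leaves implicit, namely the deletion of the repeated terminal vertex by $C_0$-homotopies and the chain $g\approx_r(h\circ\rho_{u_0})\cdot g\approx_r(f\cdot\bar{g})\cdot g\approx_r f$ via Lemmas~\ref{inverse} and~\ref{m<n}.
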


\begin{proof}
  Let $e_1,\ldots,e_p,f_1,\ldots,f_q\in\mathcal{G}^r(X)$ for $1\le p,q\le r$ such that $e_1\cdots e_p$ and $f_1\cdots f_q$ are defined. It is sufficient to show that $e_1\cdots e_p\approx_rf_1\cdots f_q$ whenever $s(e_1)=s(f_1)$ and $t(e_p)=t(f_q)$. Observe that $e_1\cdots e_p$ and $f_1\cdots f_q$ define a map $\Gamma_r\to X$ whenever $s(e_1)=s(f_1)$ and $t(e_p)=t(f_q)$. Then we get $e_1\cdots e_p\approx_rf_1\cdots f_q$, completing the proof.
\end{proof}

\begin{proof}
  [Proof of Theorem \ref{equivalence def}]
  By definition, $F^{r}$ and $G^{r}$ are mutually inverse, and natural with respect to a directed graph $X$. Thus the statement follows.
\end{proof}

By Proposition \ref{algebric description} and Theorem \ref{equivalence def}, there is a natural isomorphism
\[
  \Pi_1^r(X)\cong\widetilde{\Pi}_1^r(X).
\]
Then in the sequel, we will not distinguish these two groupoids. By Lemma \ref{C-homotopy}, we can recover:

\begin{corollary}
  [Di, Ivanov, Mukoseev, and Zhang {\cite[Theorem 2.4]{DIMZ}}]
  \label{2-fundamental groupoid}
  The $2$-fundamental groupoid of a directed graph is naturally isomorphic to the fundamental groupoid defined by Grigor'yan, Jimenez, and Muranov \cite{GJM}.
\end{corollary}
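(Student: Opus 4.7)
The plan is a short chain of identifications built directly on the combinatorial machinery developed in this section. First, set $r=2$ in Theorem \ref{equivalence def} to obtain the natural isomorphism $\Pi_1^2(X)\cong\widetilde{\Pi}_1^2(X)$, whose morphisms are $C_2$-homotopy equivalence classes of paths in $X$ with composition induced by concatenation. Next, apply Lemma \ref{C-homotopy} to identify the $C_2$-homotopy relation on paths with the $C$-homotopy relation of Grigor'yan, Lin, Muranov, and Yau. This realizes $\Pi_1^2(X)$ as the groupoid of $C$-homotopy classes of paths in $X$.

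To finish, I match this path-homotopy model with $\Pi_1^\mathrm{GJM}(X)$. The cleanest route is to compose the above natural isomorphisms with the earlier corollary of this paper, which already gives $\Pi_1^2(X)\cong\Pi_1^\mathrm{GJM}(X)$ via $\mathcal{G}^2(X)$ by combining Proposition \ref{algebric description} with [Theorem 2.4]{GJM}; one then observes that every map in the resulting composition is natural in $X$. Alternatively, one could build the isomorphism hand-to-hand by sending the class of a path $e_1\cdot\cdots\cdot e_n$ to the composite $e_1\circ\cdots\circ e_n$ in $\Pi_1^\mathrm{GJM}(X)$, and checking that the three generating $C$-homotopy moves in Figure 4 (reverse-edge cancellation, triangle, square) correspond exactly to the defining relations of $\Pi_1^\mathrm{GJM}(X)$, which is the content of [Theorem 2.4]{GJM}.

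Since all the nontrivial ingredients --- Theorem \ref{equivalence def}, Lemma \ref{C-homotopy}, and the algebraic description of $\Pi_1^\mathrm{GJM}(X)$ --- are already established, I do not anticipate any substantive obstacle. The only work to be done is assembling the isomorphisms and confirming that each arrow is natural with respect to maps of directed graphs, which is immediate from the formulas defining them.
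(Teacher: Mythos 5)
Your proposal is correct and follows essentially the same route as the paper: the paper also combines Proposition \ref{algebric description} and Theorem \ref{equivalence def} to identify $\Pi_1^2(X)$ with $\widetilde{\Pi}_1^2(X)$, then invokes Lemma \ref{C-homotopy} to match $C_2$-homotopy with $C$-homotopy and hence with $\Pi_1^\mathrm{GJM}(X)$ via \cite[Theorem 2.4]{GJM}. (Only a minor quibble: Theorem \ref{equivalence def} alone gives $\widetilde{\Pi}_1^2(X)\cong\mathcal{G}^2(X)$, so Proposition \ref{algebric description} is needed even for your first step, as you in fact use later.)
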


We give a combinatorial description of $\Pi_1^\infty(X)$. For a directed graph $X$, let $\widetilde{\Pi}_1^\infty(X)$ be the groupoid whose objects are vertices of $X$ and morphisms from $x$ to $y$ is the eqivalence class of paths $f$ and $g$ from $x$ to $y$, where two paths are equivalent if $f\approx_rg$ for some $1\le r<\infty$.

\begin{corollary}
  For a directed graph $X$, there is a natural isomorphism
  \[
    \Pi_1^\infty(X)\cong\widetilde{\Pi}_1^\infty(X).
  \]
\end{corollary}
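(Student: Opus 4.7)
The approach is to reduce this to the identifications already established in the paper: combine Lemma \ref{colim} (which says $\Pi_1^\infty(X) \cong \mathrm{colim}_{r\to\infty}\Pi_1^r(X)$) with Theorem \ref{equivalence def} (which gives natural isomorphisms $\Pi_1^r(X)\cong\widetilde{\Pi}_1^r(X)$ for $1\le r<\infty$), so that it suffices to exhibit a natural isomorphism
\[
  \widetilde{\Pi}_1^\infty(X)\cong\underset{r\to\infty}{\mathrm{colim}}\,\widetilde{\Pi}_1^r(X).
\]
The functors $\widetilde{\Pi}_1^r(X)\to\widetilde{\Pi}_1^{r+1}(X)$ in this colimit are the ones induced by Lemma \ref{m<n} (they are the identity on objects and send the $C_r$-homotopy class of a path to its $C_{r+1}$-homotopy class), and under the isomorphisms of Theorem \ref{equivalence def} they correspond to the functors in the sequence \eqref{sequence Pi}.

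First I would construct a comparison functor $\Phi\colon\mathrm{colim}_r\widetilde{\Pi}_1^r(X)\to\widetilde{\Pi}_1^\infty(X)$. For each $r$, define a functor $\Phi_r\colon\widetilde{\Pi}_1^r(X)\to\widetilde{\Pi}_1^\infty(X)$ as the identity on objects, sending the $C_r$-homotopy class of a path $f$ to its equivalence class in $\widetilde{\Pi}_1^\infty(X)$; this is well-defined because any $C_r$-homotopy between paths is, by definition, a witness to their equivalence in $\widetilde{\Pi}_1^\infty(X)$. Compatibility with the transition functors is clear, so the $\Phi_r$ assemble into $\Phi$.

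For the inverse, I would invoke the fact that filtered colimits in the category of groupoids are computed on the underlying sets of objects and of morphisms (this is a standard consequence of the local presentability of $\mathrm{Gpd}$). Since each transition functor $\widetilde{\Pi}_1^r(X)\to\widetilde{\Pi}_1^{r+1}(X)$ is the identity on objects, the set of objects of the colimit is $V(X)$. For morphisms, the colimit hom-set from $x$ to $y$ is
\[
  \underset{r\to\infty}{\mathrm{colim}}\,\widetilde{\Pi}_1^r(X)(x,y),
\]
whose elements are represented by paths from $x$ to $y$, with $f$ and $g$ identified precisely when $f\approx_r g$ for some $1\le r<\infty$; this is exactly the hom-set $\widetilde{\Pi}_1^\infty(X)(x,y)$. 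Composition and inversion on both sides are induced by concatenation and reversal of paths, so the bijection is a functor $\Psi$, manifestly two-sided inverse to $\Phi$. Naturality in $X$ follows because all the constructions (concatenation, $C_r$-homotopy, the colimit) are natural with respect to maps of directed graphs.

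The only subtlety worth spelling out is the explicit computation of the filtered colimit in $\mathrm{Gpd}$; I would state it as an elementary lemma if the reader is not expected to take it for granted, noting that since the transition functors are all identity on objects, no nontrivial identifications of objects occur and no new composites arise that are not already present in some $\widetilde{\Pi}_1^r(X)$. Beyond that, the proof is essentially a matter of matching the definitions.
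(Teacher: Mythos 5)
Your proposal is correct and follows the same route as the paper: the paper's proof simply chains Lemma \ref{colim} with Theorem \ref{equivalence def} and observes that $\underset{r\to\infty}{\mathrm{colim}}\,\widetilde{\Pi}_1^r(X)\cong\widetilde{\Pi}_1^\infty(X)$ holds by definition of $\widetilde{\Pi}_1^\infty(X)$. You merely spell out the explicit computation of the filtered colimit of groupoids that the paper leaves implicit, which is a harmless (and arguably helpful) elaboration.
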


\begin{proof}
  By Proposition \ref{colim} and the definition of $\widetilde{\Pi}_1^\infty(X)$, there are isomorphisms
  \[
    \Pi_1^\infty(X)\cong\underset{r\to\infty}{\mathrm{colim}}\,\widetilde{\Pi}_1^r(X)\cong\widetilde{\Pi}_1^\infty(X).
  \]
\end{proof}

%%%%% Subsection 3.3 %%%%%

\subsection{Homotopy invariance}

We recall from \cite{HR1} the definition of an $r$-homotopy between maps of directed graphs for $1\le r<\infty$, and also define an $\infty$-homotopy.

\begin{definition}
  Let $\varphi_0,\varphi_1\colon X\to Y$ be maps of directed graphs.
  \begin{enumerate}
    \item For $1\le r<\infty$, an \emph{$r$-homotopy} from $\varphi_0$ to $\varphi_1$ is a family $h=\{h_x\}_{x\in V(X)}$ where $h_x\colon\vec{I}_r\to X$ is a directed path from $\varphi_0(x)$ to $\varphi_1(x)$.

    \item An \emph{$\infty$-homotopy} from $\varphi_0$ to $\varphi_1$ is a family $h=\{h_x\}_{x\in V(X)}$ where $h_x$ is a directed path from $\varphi_0(x)$ to $\varphi_1(x)$.
  \end{enumerate}
\end{definition}

Remark that a $1$-homotopy is a homotopy considered by Grigor'yan, Lin, Muranov, and Yau \cite{GLMY1,GLMY2}, and an $\infty$-homotopy is a long homotopy considered by Hepworth and Roff \cite{HR1}. We use the name $\infty$-homotopy, instead of a long homotopy, because we prefer to consider it and $r$-homotopies for $1\le r<\infty$ simultaneously.

Let $\varphi_0,\varphi_1\colon X\to Y$ be maps of directed graphs. If there is a zig-zag of $r$-homotopies between $\varphi_0$ and $\varphi_1$, then we write $\varphi_0\simeq_r\varphi_1$ and say that $\varphi_0$ and $\varphi_1$ are $r$-homotopic. Analogously to Lemma \ref{m<n}, we have:

\begin{lemma}
  \label{m<n homotopy}
  Let $\varphi_0,\varphi_1\colon X\to Y$ be maps of directed graphs. If $\varphi_0\simeq_r\varphi_1$, then for any $r\le s\le\infty$, $\varphi_0\simeq_s\varphi_1$.
\end{lemma}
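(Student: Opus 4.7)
The plan is to reduce $\simeq_r \Rightarrow \simeq_s$ to the corresponding statement for a single $r$-homotopy (rather than a zig-zag), and then to exhibit a direct upgrade of each $r$-homotopy to an $s$-homotopy by padding the directed paths $h_x$ with constant steps.

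First I would note that since $\simeq_r$ is defined as the equivalence relation generated by $r$-homotopies, it suffices to show that whenever there is a single $r$-homotopy $h = \{h_x\}_{x \in V(X)}$ from $\varphi_0$ to $\varphi_1$, there is also an $s$-homotopy from $\varphi_0$ to $\varphi_1$. Composing such upgrades along a zig-zag then gives an $s$-zig-zag between $\varphi_0$ and $\varphi_1$.

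For the case $r \le s < \infty$, given $h_x \colon \vec{I}_r \to Y$, define $h'_x \colon \vec{I}_s \to Y$ by
\[
  h'_x(i) =
  \begin{cases}
    h_x(i) & 0 \le i \le r,\\
    \varphi_1(x) & r \le i \le s.
  \end{cases}
\]
The key point is that this is a legitimate map of directed graphs: on each edge $(i, i+1)$ of $\vec{I}_s$ with $i < r$, $h'_x$ agrees with $h_x$ and hence sends the edge either to an edge of $Y$ or to an identity pair; on each edge $(i, i+1)$ with $i \ge r$, both endpoints are sent to $\varphi_1(x)$, which is permitted by the definition of a map. Since $h'_x(0) = \varphi_0(x)$ and $h'_x(s) = \varphi_1(x)$, the family $\{h'_x\}_{x \in V(X)}$ is an $s$-homotopy from $\varphi_0$ to $\varphi_1$.

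For the case $s = \infty$, the $r$-homotopy $h = \{h_x\}$ itself is already an $\infty$-homotopy, since each $h_x$ is a directed path from $\varphi_0(x)$ to $\varphi_1(x)$ and an $\infty$-homotopy requires no constraint on the length. This completes both cases. There is no real obstacle; the only subtlety is the distinction between maps and homomorphisms of directed graphs, which is precisely what allows the padding argument to work.
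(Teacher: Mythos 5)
Your proof is correct, and it matches the argument the paper intends: the paper omits the proof, deferring to the analogy with Lemma \ref{m<n}, whose mechanism (collapsing the extra vertices of the longer interval, equivalently padding with constant steps, which is legitimate because paths are maps rather than homomorphisms) is exactly what you carry out. The reduction to a single $r$-homotopy and the observation that any $r$-homotopy is already an $\infty$-homotopy are both handled correctly.
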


We say that directed graphs $X$ and $Y$ are \emph{$r$-homotopy equivalent} if there are maps $\phi\colon X\to Y$ and $\psi\colon Y\to X$ such that $\phi\circ\psi\simeq_r1_Y$ and $\psi\circ\phi\simeq_r1_X$. By Lemma \ref{m<n homotopy}, if $X$ and $Y$ are $r$-homotopy equivalent, then $X$ and $Y$ are $s$-homotopy equivalent for any $s\ge r$. We prove the homotopy invariance of $r$-fundamental groupoids.

\begin{proposition}
  \label{natural transformation}
  Let $\varphi_0,\varphi_1\colon X\to Y$ be maps of directed graphs, and let $\Pi_1^{s}(\varphi_0),$\\$\Pi_1^{s}(\varphi_1)\colon\Pi_1^s(X)\to\Pi_1^s(Y)$ be the associated functors. Then an $r$-homotopy $h$ from $\varphi_0$ to $\varphi_1$ defines a natural isomorphism of the functors
  \[
    \alpha(h)\colon\Pi_1^{s}(\varphi_0)\to\Pi_1^{s}(\varphi_1)
  \]
  for $r+1\le s\le\infty$.
\end{proposition}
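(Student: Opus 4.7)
The plan is to define the component $\alpha(h)_x \in \Pi_1^s(Y)(\varphi_0(x),\varphi_1(x))$ to be the class of the directed path $h_x$, which is automatically invertible since $\Pi_1^s(Y)$ is a groupoid. By the algebraic description (Proposition~\ref{algebric description}), morphisms in $\Pi_1^s(X)$ are generated by the elements of $\widetilde{E}(X)$, so naturality of $\alpha(h)$ reduces by functoriality to the case of a single edge $e = (x,y) \in E(X)$; it then suffices to prove that the directed paths $(\varphi_0 \circ e) \cdot h_y$ and $h_x \cdot (\varphi_1 \circ e)$ represent the same morphism in $\Pi_1^s(Y)$.

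For a finite $r$-homotopy ($1 \le r < \infty$) both paths have length $r+1$, and I will show them $C_{r+1}$-homotopic by constructing a single map $H \colon \Gamma_{r+1} \to Y$ whose upper directed side $u_0 \to u_1 \to \cdots \to u_{r+1}$ realizes $(\varphi_0 \circ e) \cdot h_y$ and whose lower directed side $v_0 \to v_1 \to \cdots \to v_{r+1}$ realizes $h_x \cdot (\varphi_1 \circ e)$. Explicitly, I will set $H(u_0)=\varphi_0(x)$, $H(u_1)=\varphi_0(y)$, $H(u_{i+1})=h_y(i)$ for $1 \le i \le r$, $H(v_i)=h_x(i)$ for $0 \le i \le r$, and $H(v_{r+1})=\varphi_1(y)$; since the definition of a map of directed graphs permits collapsing an edge to a vertex, $H$ is well-defined even when $\varphi_0$ or $\varphi_1$ collapses $e$. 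The loop $H \circ \rho_{u_0}$ is then precisely $((\varphi_0 \circ e) \cdot h_y) \cdot \overline{h_x \cdot (\varphi_1 \circ e)}$, and clause (2) of Definition~\ref{n-homotopy def} gives $c_{\varphi_0(x)} \to_{r+1} H \circ \rho_{u_0}$; right-concatenating with $h_x \cdot (\varphi_1 \circ e)$ and applying Lemmas~\ref{C_r-homotopy} and~\ref{inverse} delivers the desired $C_{r+1}$-equivalence. Lemma~\ref{m<n} then promotes this to a $C_s$-homotopy for every finite $s \ge r+1$, and the $s=\infty$ case follows from Lemma~\ref{colim} applied to the same $C_{r+1}$-equivalence.

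The remaining case is $r = s = \infty$, where $h_x$ and $h_y$ may have different lengths so no single $\Gamma_{r+1}$ contains both paths as its two sides. Here I will instead invoke the algebraic description: choosing $N$ larger than the lengths of both $(\varphi_0 \circ e) \cdot h_y$ and $h_x \cdot (\varphi_1 \circ e)$, each is a composite of at most $N$ elements of $\widetilde{E}(Y)$ with source $\varphi_0(x)$ and target $\varphi_1(y)$, so clause (5) of Definition~\ref{G(X)} identifies them in $\mathcal{G}^N(Y) \cong \Pi_1^N(Y)$ and hence in the colimit $\Pi_1^\infty(Y)$. The main technical care lies in verifying that $H$ is a well-defined map of directed graphs under all degeneracy combinations for $\varphi_0(e)$ and $\varphi_1(e)$; once $H$ is available, the rest reduces to a formal manipulation within the $C_r$-homotopy calculus already developed in Lemmas~\ref{concatenation}--\ref{m<n}.
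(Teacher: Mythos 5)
Your proposal is correct and follows essentially the same route as the paper: define $\alpha(h)_x=[h_x]$, verify naturality on a single edge $e$ by exhibiting the loop $(\varphi_0\circ e)\cdot h_y\cdot\overline{(\varphi_1\circ e)}\cdot\overline{h_x}$ as the image of a map $\Gamma_{r+1}\to Y$ (giving a $C_{r+1}$-homotopy), and then promote via Lemma \ref{m<n} and the colimit description for $s=\infty$. Your extra care with the degenerate cases of $H$, the $r=\infty$ situation, and deducing invertibility from the target being a groupoid (where the paper instead uses $\bar{h}$) are just more explicit renderings of the same argument.
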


\begin{proof}
  Let $h=\{h_x\}_{x\in V(X)}$ be an $r$-homotopy from $\varphi_0$ to $\varphi_1$, where $h_x$ is a directed path $h_x\colon\vec{I}_r\to X$ from $\varphi_0(x)$ to $\varphi_1(x)$. We set $\alpha(h)_x=[h_x]\in\Pi_1^{s}(X)(\varphi_0(x),\varphi_1(x))$ for $x\in V(X)$ and $r+1\le s\le\infty$. Observe that for an edge $e=(u,v)\in E(X)$, the path $(\varphi_0\circ e)\cdot h_v\cdot\overline{(\varphi_1\circ e)}\cdot\overline{h_u}$ defines a $C_{r+1}$-homotopy from $(\varphi_0\circ e)\cdot h_v$ to $h_u\cdot(\varphi_1\circ e)$, where we regard the edge $e$ as a directed path. Then by Lemma \ref{m<n}, the diagram
  \[
    \xymatrix{
      \varphi_0(u)\ar[r]^{[\varphi_0\circ e]}\ar[d]_{\alpha(h)_u}&\varphi_0(v)\ar[d]^{\alpha(h)_v}\\
      \varphi_1(u)\ar[r]^{[\varphi_1\circ e]}&\varphi_1(v)
    }
  \]
  commutes in $\Pi_1^{s}(X)$, and so we get a natural transformation $\alpha(h)\colon\Pi_1^{s}(\varphi_0)\to\Pi_1^{s}(\varphi_1)$. Let $\bar{h}=\{\overline{h_x}\}_{x\in V(X)}$. Then $\bar{h}$ is an $r$-homotopy from $\varphi_1$ to $\varphi_0$, and by definition, $\alpha(h)$ and $\alpha(\bar{h})$ are mutually inverse. Thus the statement follows.
\end{proof}

By Proposition \ref{natural transformation}, we can immediately see the homotopy invariance of $r$-fundamental groupoids.

\begin{corollary}
  \label{homotopy invariance}
  If directed graphs $X$ and $Y$ are $r$-homotopy equivalent, then for $r+1\le s\le\infty$, there is a categorical equivalence
  \[
    \Pi_1^{s}(X)\simeq\Pi_1^{s}(Y).
  \]
\end{corollary}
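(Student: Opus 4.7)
The plan is to deduce this directly from Proposition \ref{natural transformation} by unpacking the definition of $r$-homotopy equivalence. First I would take maps $\phi\colon X\to Y$ and $\psi\colon Y\to X$ witnessing the $r$-homotopy equivalence, so that $\phi\circ\psi\simeq_r 1_Y$ and $\psi\circ\phi\simeq_r 1_X$ via zig-zags of $r$-homotopies. Fix $s$ with $r+1\le s\le\infty$. By the functoriality of $\Pi_1^s$, the induced functors $\Pi_1^s(\phi)\colon\Pi_1^s(X)\to\Pi_1^s(Y)$ and $\Pi_1^s(\psi)\colon\Pi_1^s(Y)\to\Pi_1^s(X)$ satisfy
\[
\Pi_1^s(\phi)\circ\Pi_1^s(\psi)=\Pi_1^s(\phi\circ\psi),\qquad \Pi_1^s(\psi)\circ\Pi_1^s(\phi)=\Pi_1^s(\psi\circ\phi).
\]

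Next I would apply Proposition \ref{natural transformation} step by step along each zig-zag. For each single $r$-homotopy $h$ in the zig-zag from $\phi\circ\psi$ to $1_Y$, the proposition produces a natural isomorphism $\alpha(h)$ between the corresponding induced functors on $\Pi_1^s$ (this is where the hypothesis $s\ge r+1$ is used). Composing these natural isomorphisms across the zig-zag yields a natural isomorphism
\[
\Pi_1^s(\phi)\circ\Pi_1^s(\psi)=\Pi_1^s(\phi\circ\psi)\cong\Pi_1^s(1_Y)=\mathrm{id}_{\Pi_1^s(Y)},
\]
and analogously $\Pi_1^s(\psi)\circ\Pi_1^s(\phi)\cong\mathrm{id}_{\Pi_1^s(X)}$.

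These two natural isomorphisms exhibit $\Pi_1^s(\phi)$ and $\Pi_1^s(\psi)$ as mutually inverse equivalences of groupoids, giving the desired categorical equivalence $\Pi_1^s(X)\simeq\Pi_1^s(Y)$. The only point that requires a sentence of care is handling the zig-zag rather than a single $r$-homotopy, but since natural isomorphisms compose and invert freely in a groupoid of functors, this is routine. There is no genuine obstacle here; the work was done in Proposition \ref{natural transformation}, and this corollary is a formal consequence.
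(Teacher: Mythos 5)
Your proof is correct and follows exactly the route the paper intends: the paper derives this corollary immediately from Proposition \ref{natural transformation}, and your write-up just spells out the standard details (functoriality, composing natural isomorphisms along the zig-zags) that the paper leaves implicit.
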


%%%%% subsection 3.4 %%%%%

\subsection{Product formula}

We prove the product formula for $r$-fundamental groupoids. Since the fundamental groupoid of a directed graph defined by Grigor'yan, Jimenez, and Muranov \cite{GJM} satisfies the products formula, it follows from Corollary \ref{2-fundamental groupoid} that the $2$-fundamental groupoid does so. For directed graphs $X$ and $Y$, let $p_1\colon X\square Y\to X$ and $p_2\colon X\square Y\to Y$ denote projections.

\begin{lemma}
  \label{square}
  For any $I\in\I_k$ and $J\in\I_l$, all paths from $(0,0)$ to $(k,l)$ in $I\square J$ are $C_2$-homotopic.
\end{lemma}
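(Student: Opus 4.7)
The plan is to argue that the \emph{unit squares} of $I\square J$ provide all the $C_2$-homotopies needed to identify any two paths from $(0,0)$ to $(k,l)$, and then to reduce the general case to moving one coordinate at a time.

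The central step will be a \emph{square-swap}: for adjacent pairs $(i,i+1)$ in $[k]$ and $(j,j+1)$ in $[l]$, any two length-two paths in $I\square J$ between two opposite corners of the unit square on $\{i,i+1\}\times\{j,j+1\}$ are $C_2$-homotopic. To prove this I plan to build a map $h\colon\Gamma_2\to I\square J$ sending the two directed length-two paths $u_0\to u_1\to u_2$ and $v_0\to v_1\to v_2$ of $\Gamma_2$ to the two directed length-two paths of the unit square going from its \emph{source} (the corner with two outgoing square-edges) to its \emph{sink} (the corner with two incoming square-edges); these exist because each row and each column of the unit square contains exactly one edge. Writing $P_1,P_2$ for the two length-two subpaths to be compared and $p,q$ for their common endpoints, one checks that $h\circ\rho_x=P_1\cdot\overline{P_2}$ is a loop at $p$ for an appropriate choice of $x\in V(\Gamma_2)$; then $P_2\approx_2 c_p\cdot P_2\approx_2 (h\circ\rho_x)\cdot P_2=P_1\cdot(\overline{P_2}\cdot P_2)\approx_2 P_1\cdot c_q=P_1$ by Definition \ref{n-homotopy def} and Lemma \ref{inverse}.

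Given the square-swap, the lemma will follow by a bubble-sort. Every path in $I\square J$ is a sequence of edges, each of which is either an $I$-edge (changing the first coordinate) or a $J$-edge (changing the second). Using Lemma \ref{C_r-homotopy}(2) to splice local $C_2$-homotopies inside a longer path, each consecutive $J$-$I$ pattern can be swapped to an $I$-$J$ pattern by the square-swap. After finitely many swaps the path is in \emph{normal form}: first a path in $I$ from $0$ to $k$ along $j$-level $0$, then a path in $J$ from $0$ to $l$ along $i$-level $k$. Both parts lie in a single path graph, so by Lemma \ref{inverse} together with Lemma \ref{m<n} each is $C_2$-homotopic to the unique reduced path between its endpoints, and this produces the canonical path from $(0,0)$ to $(k,l)$.

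The main obstacle will be verifying the square-swap uniformly. The two diagonals of the unit square play different roles depending on the directions of the $I$- and $J$-edges: the source-sink diagonal supports length-two subpaths that are directed, and one may take $x=u_0$; on the other diagonal both competing length-two subpaths involve one forward and one backward edge, and one must instead take $x=u_1$ or $x=v_1$, depending on which endpoint of the diagonal the subpath starts at. Once both configurations have been checked, the swap lemma is established and the remainder of the argument is a routine combinatorial reduction.
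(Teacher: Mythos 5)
Your proof is correct and rests on the same geometric input as the paper's: every unit square $I_{\{i,i+1\}}\square J_{\{j,j+1\}}$ is isomorphic to $\Gamma_2$, so the two ways around it are $C_2$-homotopic. The organization differs, however. The paper argues by induction on $k+l$: it first reduces $f$, uses the single unit square at the origin to arrange $f(1)=(1,0)$, and then passes to the smaller grid $I_{\{1,\ldots,k\}}\square J$. Your bubble-sort instead applies the square-swap everywhere along the path and terminates at the normal form $(I\square 0)\cdot(k\square J)$ by decreasing the number of $J$-before-$I$ inversions. Your route makes explicit the antidiagonal case of the swap (the two length-two paths joining $u_1$ and $v_1$, where $\rho_x$ must be based on that diagonal), which the paper's one-square-at-the-origin step only exercises in the source-to-sink configuration, and it sidesteps the point in the paper's induction where the tail of $f$ beyond its first edge need not remain inside the smaller grid, so that an extra normalization is implicitly required there. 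What you pay for this is the routine verification that the sort terminates and that the two monotone pieces of the normal form, living in the path graphs $I\square 0$ and $k\square J$, reduce to the canonical path via Lemmas \ref{inverse} and \ref{m<n}; you should also note at the outset that constant steps can be deleted by $C_0$-homotopies so that every step of the path traverses exactly one edge of $I\square J$. Both arguments are of comparable length; yours is somewhat more self-contained.
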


\begin{proof}
  We prove the statement by induction on $k+l$. Let $f$ and $g$ be paths from $(0,0)$ to $(k,l)$ in $I\square J$. If $k=0$ or $l=0$, then the statement is trivially true. Hence the $k+l=0,1$ case is done, and we may assume $k,l\ge 1$. Let $g$ be the reduced path in $I\square J$ obtained by concatenating $I\square 0$ and $k\square J$. Clearly, every path is $0$-homotopic to a reduced path, so we consider a reduced path $f$ from $(0,0)$ to $(k,l)$ in $I\square J$. Then $f(1)=(1,0)$ or $(0,1)$. If $f(1)=(1,0)$, then the induction proceeds by replacing $I$ with a shorter directed graph $I_{\{1,2,\ldots,m\}}$. Suppose that $f(1)=(0,1)$. Since $k,l\ge 1$, there is $I\square J$ has the subgraph $I_{\{0,1\}}\square J_{\{0,1\}}=\vec{I}_1\square\vec{I}_1=\Gamma_2$. Then $f$ is $C_2$-homotopic to a path $\tilde{f}$ from $(0,0)$ to $(k,l)$ in $I\square J$ with $\tilde{f}(1)=(1,0)$, so the induction proceeds.
\end{proof}

\begin{lemma}
  \label{commutative}
  Let $f\colon I\to X$ and $g\colon J\to Y$ be paths in directed graphs $X$ and $Y$, where $I\in\I_k$ and $J\in\I_l$. Then
  \[
    (f,g(0))\cdot(f(k),g)\approx_2(f(0),g)\cdot(f,g(l)).
  \]
\end{lemma}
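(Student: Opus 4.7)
The plan is to exhibit both sides of the equivalence as images, under a single map of directed graphs $\Phi\colon I\square J\to X\square Y$, of two reduced paths from $(0,0)$ to $(k,l)$ in $I\square J$, and then to invoke Lemma \ref{square} together with the functoriality of $C_r$-homotopies recorded in Lemma \ref{C_r-homotopy}(3).

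Concretely, I would define $\Phi\colon I\square J\to X\square Y$ by $\Phi(i,j)=(f(i),g(j))$ and verify that $\Phi$ is a map of directed graphs. Every edge of $I\square J$ has one coordinate constant and the other equal to an edge of $I$ or $J$; since $f$ and $g$ are maps (so each edge is either preserved or collapsed), $\Phi$ sends any such edge either to an edge of $X\square Y$ or to a diagonal pair, as required. I would then let $\alpha$ be the reduced path in $I\square J$ from $(0,0)$ to $(k,l)$ that first traverses $I\square\{0\}$ and then $\{k\}\square J$, and $\beta$ the reduced path that first traverses $\{0\}\square J$ and then $I\square\{l\}$. A direct unwinding of the definitions gives
\[
  \Phi\circ\alpha=(f,g(0))\cdot(f(k),g)\quad\text{and}\quad\Phi\circ\beta=(f(0),g)\cdot(f,g(l)).
\]

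Lemma \ref{square} supplies $\alpha\approx_2\beta$ inside $I\square J$, and Lemma \ref{C_r-homotopy}(3) propagates this to $\Phi\circ\alpha\approx_2\Phi\circ\beta$, which is precisely the claimed $C_2$-homotopy. There is essentially no obstacle beyond checking that $\Phi$ is genuinely a map (not necessarily a homomorphism) of directed graphs, which is built into the definition of $\square$; the substantive combinatorial content has already been packaged into Lemma \ref{square}.
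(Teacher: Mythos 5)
Your proposal is correct and is essentially the paper's own argument: the paper likewise regards the two concatenations as the images, under $(i,j)\mapsto(f(i),g(j))$, of the two L-shaped paths in $I\square J$ and then applies Lemma \ref{square} (with the pushforward property of Lemma \ref{C_r-homotopy}(3) implicit). You have merely made explicit the verification that this map is a map of directed graphs, which the paper leaves to the reader.
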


\begin{proof}
  We may consider $(f,g(0))\cdot(f(k),g)$ is a map from the subgraph $(I\square 0)\cup(k\square J)$ of $I\square J$, where $I\square 0$ is sent by $(f,g(0))$ and $k\square J$ is sent by $(f(k),g)$. Similarly, we may consider $(f(0),g)\cdot(f,g(l))$ as a map from the subgraph $(0\square J)\cup(I\square l)$ of $I\square J$. Then by Lemma \ref{square}, the statement follows.
\end{proof}

\begin{theorem}
  \label{product formula}
  Let $X$ and $Y$ be directed graphs. Then the map
  \[
    (p_1)_*\times(p_2)_*\colon\Pi_1^{r}(X\square Y)\to\Pi_1^{r}(X)\times\Pi_1^{r}(Y)
  \]
  is an isomorphism for $2\le r\le\infty$.
\end{theorem}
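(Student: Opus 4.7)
The plan is to construct a functor $\Phi\colon\Pi_1^{r}(X)\times\Pi_1^{r}(Y)\to\Pi_1^{r}(X\square Y)$ and show it is a two-sided inverse of $(p_1)_*\times(p_2)_*$. Bijectivity on objects is immediate, so I focus on hom-sets. Given $[f]\in\Pi_1^{r}(X)(x_0,x_1)$ and $[g]\in\Pi_1^{r}(Y)(y_0,y_1)$, I set
\[
\Phi([f],[g])=[(f,y_0)\cdot(x_1,g)],
\]
where $(f,y_0)$ denotes the lift of $f$ to $X\square\{y_0\}$ and $(x_1,g)$ the lift of $g$ to $\{x_1\}\square Y$. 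Well-definedness is routine: any $C_s$-homotopy of $f$ in $X$ lifts to $X\square\{y_0\}\subset X\square Y$ by pairing each map $\Gamma_s\to X$ with the constant map $y_0$, and similarly for $g$. Functoriality of $\Phi$ boils down to the identity
\[
(f_2,y_0)\cdot(x_2,g_1)\approx_r(x_1,g_1)\cdot(f_2,y_1),
\]
which is exactly Lemma \ref{commutative} combined with Lemma \ref{m<n}.

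One composite is easy: $((p_1)_*\times(p_2)_*)\circ\Phi$ sends $([f],[g])$ to $([f\cdot c_{x_1}],[c_{y_0}\cdot g])=([f],[g])$, as concatenation with constant paths is $C_0$-homotopically trivial.

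The substantive step is the other composite, namely to show that every path $h\colon I_n\to X\square Y$ from $(x_0,y_0)$ to $(x_1,y_1)$ satisfies
\[
h\approx_2(p_1\circ h,y_0)\cdot(x_1,p_2\circ h).
\]
Each edge of $I_n$ is sent by $h$ either to a \emph{horizontal} edge (changing only the $X$-coordinate) or a \emph{vertical} edge (changing only the $Y$-coordinate). My plan is to iterate the following local swap: whenever a vertical edge is immediately followed by a horizontal one, the sub-path $(a,b)\to(a,b')\to(a',b')$ is replaced by $(a,b)\to(a',b)\to(a',b')$. Both are paths from $(0,0)$ to $(1,1)$ inside a subgraph of the form $I_1\square J_1\subset X\square Y$ with $I_1,J_1\in\I_1$, and Lemma \ref{square} guarantees they are $C_2$-homotopic. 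An induction on the number of vertical-horizontal inversions in $h$ moves all horizontal edges to the front, producing $(p_1\circ h,y_0)\cdot(x_1,p_2\circ h)$ up to collapsing the identity edges coming from projecting vertical (resp.\ horizontal) edges to $X$ (resp.\ $Y$), which is a $C_0$-homotopy. Lemma \ref{m<n} transfers this to all $2\le r<\infty$, and the $r=\infty$ case is immediate since a $C_2$-homotopy represents equivalence in $\widetilde{\Pi}_1^{\infty}$.

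The main obstacle will be the bookkeeping in the rearrangement step, specifically tracking the possibly reversed orientations of horizontal and vertical edges as they are swapped past each other and verifying that the resulting sequence of $X$-edges (resp.\ $Y$-edges) is exactly $p_1\circ h$ (resp.\ $p_2\circ h$) up to removing identity edges; once this is organized carefully, each local move is supplied by Lemma \ref{square} and the induction closes the argument.
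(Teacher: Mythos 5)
Your proposal is correct and follows essentially the same route as the paper: the heart of both arguments is the normalization of an arbitrary path in $X\square Y$ into the form (path in the $X$-direction)$\cdot$(path in the $Y$-direction) via the $C_2$-homotopies supplied by Lemma \ref{square} (packaged in the paper as Lemma \ref{commutative}). The only cosmetic differences are that you build an explicit inverse functor where the paper argues surjectivity plus injectivity, and that you treat $r=\infty$ directly through the combinatorial description $\widetilde{\Pi}_1^\infty$ rather than via the colimit argument of Proposition \ref{colim}; both are fine.
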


\begin{proof}
  We first consider the $2\le r<\infty$ case. Clearly, the map $(p_1)_*\times(p_2)_*$ is the identity map on objects and surjective on hom-sets. Then it remains to show that $(p_1)_*\times(p_2)_*$ is injective. Observe that every path in $X\square Y$ is the composite of finitely many paths of the form $(f,y)$ and $(x,g)$, where $f,g$ are paths in $X,Y$ and $x,y$ are vertices of $X,Y$. Then by Lemma \ref{commutative}, any path in $X\square Y$ is $C_2$-homotopic to a path $(f,g(0))\cdot(f(k),g)$ for some paths $f\colon I\to X$ and $g\colon J\to Y$ with $I\in\I_k$ and $J\in\I_l$. Observe that $(p_1)_*\times(p_2)_*([(f,g(0))\cdot(f(k),g)])=([f],[g])$. Thus $(p_1)_*\times(p_2)_*$ is injective. The $r=\infty$ case follows from the $2\le r<\infty$ case, Proposition \ref{colim} and the fact that finite limits commute with filtered colimits.
\end{proof}

\begin{remark}
  Theorem \ref{product formula} does not generally hold for $r=1$. For example, by Proposition \ref{1-fundamental groupoid}, $\Pi_1^{1}(\vec{I}_1\square\vec{I}_1)$ has nontrivial automorphisms, but $\Pi_1^{1}(\vec{I}_1)\times\Pi_1^{1}(\vec{I}_1)$ has trivial automorphisms only.
\end{remark}

It is easy to see that the proof of Theorem \ref{product formula} works verbatim for the strong product $X\boxtimes Y$, and so we get:

\begin{corollary}
  Let $X$ and $Y$ be directed graphs. Then the map
  \[
    (p_1)_*\times(p_2)_*\colon\Pi_1^{r}(X\boxtimes Y)\to\Pi_1^{r}(X)\times\Pi_1^{r}(Y)
  \]
  is an isomorphism for $2\le r\le\infty$, where $p_i$ denotes the $i$-th projection for $i=1,2$.
\end{corollary}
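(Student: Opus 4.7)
The plan is to follow the proof of Theorem \ref{product formula} verbatim, after first verifying that the one new feature of the strong product can be absorbed by a triangle $C$-homotopy. The strong product $X \boxtimes Y$ differs from the box product $X \square Y$ only by the \emph{diagonal} edges $((x_0,y_0),(x_1,y_1))$ with $(x_0,x_1)\in E(X)$ and $(y_0,y_1)\in E(Y)$, so the work reduces to showing that each such diagonal edge is $C_2$-homotopic (inside $X\boxtimes Y$) to the corresponding $L$-shape in the box-product subgraph; this single fact will let the supporting Lemmas \ref{square} and \ref{commutative} transfer.

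The central observation I will use is that each diagonal edge $d\colon (x_0,y_0)\to (x_1,y_1)$, together with the box-product edges $(x_0,y_0)\to (x_1,y_0)\to (x_1,y_1)$, forms a directed triangle with apex $(x_0,y_0)$---precisely the middle configuration in the $C$-homotopy figure of Section \ref{Combinatorial description}. Hence $d$ and the $L$-shape $p\colon (x_0,y_0)\to (x_1,y_0)\to (x_1,y_1)$ fit into a single triangle $C$-homotopy, and by Lemma \ref{C-homotopy} this gives $d \approx_2 p$. Concretely, I would define a map $h\colon \Gamma_2 \to X\boxtimes Y$ by $u_0=v_0 \mapsto (x_0,y_0)$, $u_1\mapsto (x_1,y_1)$, $u_2=v_2 \mapsto (x_1,y_1)$, $v_1\mapsto (x_1,y_0)$; then $h\circ \rho_{u_0}$ is the loop $d\cdot C\cdot \bar p$ at $(x_0,y_0)$ with a constant segment $C$, and inserting this loop at the constant path $c_{(x_0,y_0)}$ and cancelling $\bar p\cdot p$ via Lemma \ref{inverse} produces the required $C_2$-homotopy.

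With this reduction in hand, Lemma \ref{square} adapts immediately: the inductive case analysis on $f(1)$ acquires the new possibility $f(1)=(1,1)$, which is handled by first replacing the initial diagonal by the $L$-shape $(0,0)\to(1,0)\to(1,1)$ and then invoking the $f(1)=(1,0)$ step of the original argument. Lemma \ref{commutative} carries over unchanged, since both paths in its statement already lie in $X \square Y \subset X\boxtimes Y$. The injectivity argument of Theorem \ref{product formula} then runs word for word, while surjectivity of $(p_1)_*\times (p_2)_*$ on hom-sets and its action on objects are immediate from the definition of the strong product; the $r=\infty$ case follows from the finite case via Lemma \ref{colim} and the commutation of finite limits with filtered colimits. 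The only real obstacle, where some care will be needed, is the bookkeeping of the possible orientations of the edges of $I$ and $J$ at the corner $(0,0)$, which determines the direction of the diagonal; in each orientation the resulting triangle still matches one of the listed $C$-homotopy configurations, so the same triangle argument applies.
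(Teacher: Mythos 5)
Your proposal is correct and follows essentially the same route as the paper, which simply asserts that the proof of Theorem \ref{product formula} ``works verbatim'' for the strong product; your triangle $C_2$-homotopy collapsing each diagonal edge to the corresponding $L$-shape in the box-product subgraph is precisely the observation that justifies that verbatim transfer. The only minor point worth noting is that diagonal edges can occur anywhere along a path (and in either orientation, depending on the edge directions in $I$ and $J$), so it is cleanest to first replace \emph{every} diagonal edge by its $L$-shape, reducing any path in $X\boxtimes Y$ to one in $X\square Y$ before invoking Lemmas \ref{square} and \ref{commutative} — but your argument already contains all the ingredients for this.
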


%%%%% Section 4 %%%%%

\section{Hurewicz theorem}\label{Hurewicz theorem}

In this section, we recall the magnitude-path spectral sequence, and prove Theorem \ref{main 1}. We also consider a relation between the directed graph $\Gamma_r$ and a differential in the magnitude-path spectral sequence.

%%%%% Subsection 4.1 %%%%%

\subsection{Magnitude-path spectral sequence}

Let $X$ be a directed graph. Let $\mathrm{RC}_*(X)$ be the normalized Moore complex of $\mathrm{N}(X)$. Namely, $\mathrm{RC}_n(X)$ is the free abelian group generated by $(x_0,\ldots,x_n)\in V(X)^{n+1}$ such that $L(x_0,\ldots,x_n)<\infty$ and $x_i\ne x_{i+1}$ for $i=0,1\ldots,n-1$, and the boundary map is given by
\[
  \partial(x_0,\ldots,x_n)=\sum_{i=0}^n(-1)^n(x_0,\ldots,\widehat{x_i},\ldots,x_n)
\]
where we set $(x_0,\ldots,\widehat{x_i},\ldots,x_n)=0$ if $x_{i-1}=x_{i+1}$.

\begin{definition}
  [Hepworth and Roff \cite{HR2}]
  The \emph{reachability homology} of a directed graph $X$ is defined by
  \[
    \mathrm{RH}_*(X)=H_*(\mathrm{RC}_*(X)).
  \]
\end{definition}

Let $F_p\mathrm{RC}_*(X)$ be the normalized Moore complex of $\mathrm{N}^p(X)$. Then $F_p\mathrm{RC}_*(X)$ is a subcomplex of $\mathrm{RC}_*(X)$ such that $F_p\mathrm{RC}_n(X)$ is the free abelian group generated by $(x_0,\ldots,x_n)\in\mathrm{RC}_n(X)$ with $L(x_0,\ldots,x_n)\le p$. Now we get the exhausting filtration
\begin{equation}
  \label{filtration}
  F_0\mathrm{RC}_*(X)\subset F_1\mathrm{RC}_*(X)\subset F_2\mathrm{RC}_*(X)\subset\cdots\subset \mathrm{RC}_*(X).
\end{equation}

\begin{definition}
  [Asao \cite{A}]
  The \emph{magnitude-path spectral sequence} (MPSS, for short) $E_{p,q}^r(X)$ of a directed graph $X$ is the spectral sequence associated to the filtration \eqref{filtration}.
\end{definition}

We employ the standard notation for homology spectral sequences though Asao \cite{A} employs a nonstandard one. Since $F_p\mathrm{RC}_{p+q}(X)=0$ for $p<0$ or $q>0$, the MPSS is a fourth quadrant spectral sequence such that the differential $d^r$ has degree $(-r,r-1)$. Let $\mathrm{MH}_*^*(X)$ and $\mathrm{PH}_*(X)$ denote the magnitude homology and the path homology of a directed graph $X$. As mentioned in Section \ref{Introduction}, we have:

\begin{proposition}
  [Asao {\cite[Proposition 7.8]{A}}]
  For a directed graph $X$, there are isomorphisms
  \[
    E_{p,q}^1(X)\cong\mathrm{MH}_{p+q}^p(X)\quad\text{and}\quad E_{p,0}^2(X)\cong\mathrm{PH}_p(X).
  \]
\end{proposition}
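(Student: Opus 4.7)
My plan is to unwind the filtration page by page. Each column $E^0_{p,*}=F_p\mathrm{RC}_*/F_{p-1}\mathrm{RC}_*$ is, as a graded abelian group, freely generated by tuples $(x_0,\ldots,x_n)$ with $x_i\ne x_{i+1}$ and $L(x_0,\ldots,x_n)=p$ exactly (the condition $L\le p$ together with being nonzero in the quotient forces $L=p$). The induced differential retains precisely those face maps $\partial_i$ satisfying $L(\partial_i(x_0,\ldots,x_n))=p$, i.e.\ the interior faces where $d(x_{i-1},x_{i+1})=d(x_{i-1},x_i)+d(x_i,x_{i+1})$. This is exactly the Hepworth--Willerton magnitude chain complex in length $p$, so $E^1_{p,q}=H_q(E^0_{p,*})=\mathrm{MH}^p_{p+q}(X)$.

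For the second isomorphism, I would first isolate the $q=0$ row. Since $x_i\ne x_{i+1}$ implies $d(x_i,x_{i+1})\ge 1$, a tuple with $p+q+1$ vertices and total length $p$ forces $q\le 0$, with equality exactly when every gap is an edge. Hence $E^0_{p,0}$ is the free abelian group on allowed $p$-paths, while $E^0_{p,q}=0$ for $q>0$, so $E^1_{p,0}=\ker(d^0)$. An allowed chain $c$ lies in this kernel if and only if the length-$p$ summands of $\partial c$ cancel, and by the triangle inequality these are precisely the non-allowed summands (interior face removals with $d(x_{i-1},x_{i+1})=2$). Thus $\ker d^0$ is identified with the subgroup of allowed $p$-chains $c$ whose boundary $\partial c$ is again allowed, i.e.\ with the $p$-th path module $\Omega_p$ of Grigor'yan--Lin--Muranov--Yau.

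Finally, $d^1\colon E^1_{p,0}\to E^1_{p-1,0}$ is induced by the full boundary $\partial$, and for $c\in\Omega_p$ the chain $\partial c$ already lies in $\Omega_{p-1}$ by construction; its class in $E^1_{p-1,0}=\Omega_{p-1}$ is exactly the path-homology boundary. Hence the $q=0$ row of $E^1$ is the path chain complex $\Omega_*(X)$, and $E^2_{p,0}=\mathrm{PH}_p(X)$. The main obstacle is the middle step---matching the allowability condition defining $\Omega_p$ with the vanishing condition coming from the magnitude differential---but once this triangle-inequality bookkeeping is in place, everything else reduces to routine unwinding of the spectral sequence of a filtered complex.
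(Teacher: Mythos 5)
Your argument is correct: the paper states this result only as a citation to Asao's Proposition 7.8 without reproducing a proof, and your page-by-page unwinding --- identifying $E^0_{p,*}$ with the length-$p$ magnitude chain complex, $E^1_{p,0}=\ker d^0$ with the path module $\Omega_p$ via cancellation of the length-preserving (equivalently, non-allowed) interior faces, and $d^1$ with the path-homology boundary --- is exactly the standard argument underlying Asao's proof. The one point worth making explicit is that the endpoint faces $d_0$ and $d_{p+q}$ always strictly decrease $L$ (consecutive vertices are distinct, so each gap contributes at least $1$), which is why they vanish in the associated graded and your restriction to interior faces is legitimate.
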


%%%%% Subsection 3.2 %%%%%

\subsection{Proof of Theorem \ref{main 1}}

Recall that a pointed directed graph is a pair $(X,x_0)$ of a directed graph $X$ and a distinguished vertex $x_0$ called the basepoint. We can define maps between pointed directed graphs and $r$-homotopies between maps of pointed directed graphs in the obvious way. In particular, we can consider $r$-homotopy equivalences of pointed directed graphs.

\begin{definition}
  For $1\le r\le\infty$, the $r$-fundamental group of a pointed directed graph $(X,x_0)$ is defined by
  \[
    \pi_1^{r}(X,x_0)=\Pi_1^{r}(X)(x_0,x_0).
  \]
\end{definition}

Remark that by Proposition \ref{1-fundamental groupoid}, $\pi_1^{1}(X,x_0)$ is isomorphic to the edge-path group of the underlying undirected graph $\widehat{X}$ with basepoint $x_0$. Recall that the fundamental group of a pointed directed graph $(X,x_0)$ defined by  Grigor'yan, Lin, Muranov, and Yau \cite{GLMY2} is denoted by $\pi_1^\mathrm{GLMY}(X,x_0)$. By Lemma \ref{C-homotopy}, we get:

\begin{corollary}
  \label{2-fundamental group}
  For a pointed directed graph $(X,x_0)$, there is a natural isomorphism
  \[
    \pi_1^2(X,x_0)\cong\pi_1^\mathrm{GLMY}(X,x_0).
  \]
\end{corollary}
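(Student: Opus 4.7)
The plan is to unwind the two definitions and observe that they coincide after applying the combinatorial description developed earlier in the section. By definition, $\pi_1^2(X,x_0) = \Pi_1^2(X)(x_0,x_0)$, and by Proposition \ref{algebric description} together with Theorem \ref{equivalence def}, this is naturally isomorphic to $\widetilde{\Pi}_1^2(X)(x_0,x_0)$, i.e.\ the set of $C_2$-homotopy classes of loops at $x_0$ under concatenation.

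On the other hand, $\pi_1^{\mathrm{GLMY}}(X,x_0)$ of Grigor'yan--Lin--Muranov--Yau is, by construction in \cite{GLMY2}, the set of $C$-homotopy classes of loops at $x_0$ with concatenation as group operation. So the only thing to check is that the equivalence relation ``$C$-homotopic'' agrees with ``$C_2$-homotopic'' on loops, and that concatenation is preserved. The first is exactly the content of Lemma \ref{C-homotopy}, and the second is immediate since concatenation is defined identically on both sides.

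Thus the identity on loops descends to a well-defined bijection of hom-sets $\pi_1^{\mathrm{GLMY}}(X,x_0) \to \widetilde{\Pi}_1^2(X)(x_0,x_0) \cong \pi_1^2(X,x_0)$, which is a group homomorphism by Lemma \ref{concatenation}(2), hence an isomorphism. Naturality in $(X,x_0)$ follows because both the $C$-homotopy and the $C_2$-homotopy relations are preserved under maps of directed graphs (for the latter, see Lemma \ref{C_r-homotopy}(3)). There is no real obstacle here; the work was already done in proving Lemma \ref{C-homotopy} and Corollary \ref{2-fundamental groupoid}, and this corollary is simply the pointed (basepoint-fixing) specialization of the latter.
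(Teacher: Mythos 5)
Your proposal is correct and follows essentially the same route as the paper: the paper deduces this corollary directly from Lemma \ref{C-homotopy} (the equivalence of $C$-homotopy and $C_2$-homotopy) together with the combinatorial identification $\Pi_1^2(X)\cong\widetilde{\Pi}_1^2(X)$ established via Proposition \ref{algebric description} and Theorem \ref{equivalence def}, exactly as you do. Your additional remarks on compatibility of concatenation and on naturality only make explicit what the paper leaves implicit.
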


By Corollary \ref{homotopy invariance}, we have:

\begin{corollary}
  If pointed directed graphs $(X,x_0)$ and $(Y,y_0)$ are $r$-homotopy equivalent, then for any $r+1\le s\le\infty$, there is an isomorphism
  \[
    \pi_1^{s}(X,x_0)\cong\pi_1^{s}(Y,y_0).
  \]
\end{corollary}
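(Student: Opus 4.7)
The plan is to deduce this directly from Proposition \ref{natural transformation}, reading it at the hom-set $(x_0,x_0)$ rather than at the level of categorical equivalence from Corollary \ref{homotopy invariance}. First I would unpack the definition of $r$-homotopy equivalence of pointed directed graphs to obtain basepoint-preserving maps $\phi\colon (X,x_0)\to(Y,y_0)$ and $\psi\colon (Y,y_0)\to(X,x_0)$ together with zig-zags of $r$-homotopies witnessing $\psi\circ\phi\simeq_r 1_X$ and $\phi\circ\psi\simeq_r 1_Y$. Since $\phi$ and $\psi$ preserve basepoints, they induce group homomorphisms
\[
\phi_*\colon\pi_1^{s}(X,x_0)\to\pi_1^{s}(Y,y_0),\qquad\psi_*\colon\pi_1^{s}(Y,y_0)\to\pi_1^{s}(X,x_0)
\]
for every $r+1\le s\le\infty$, and the goal is to show each is a bijection.

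I would then apply Proposition \ref{natural transformation} (combined with Lemma \ref{m<n homotopy} to promote $r$-homotopies to $s$-homotopies) to each link of the zig-zag and compose the resulting natural isomorphisms to obtain natural isomorphisms $\Pi_1^{s}(\psi\circ\phi)\cong\mathrm{id}_{\Pi_1^{s}(X)}$ and $\Pi_1^{s}(\phi\circ\psi)\cong\mathrm{id}_{\Pi_1^{s}(Y)}$. Evaluating the first at $x_0$ lands in $\Pi_1^{s}(X)(x_0,x_0)=\pi_1^{s}(X,x_0)$ because both $\psi\circ\phi$ and $1_X$ send $x_0$ to $x_0$; call this element $\gamma$. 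Naturality applied to an arbitrary loop $[\alpha]\in\pi_1^{s}(X,x_0)$ gives the identity $[\alpha]\cdot\gamma=\gamma\cdot(\psi_*\circ\phi_*)([\alpha])$, so $\psi_*\circ\phi_*$ is conjugation by $\gamma^{-1}$, in particular a group automorphism of $\pi_1^{s}(X,x_0)$. A symmetric argument shows $\phi_*\circ\psi_*$ is an automorphism of $\pi_1^{s}(Y,y_0)$. Both composites being bijective forces $\phi_*$ and $\psi_*$ to be mutually inverse up to inner automorphism, and therefore to be isomorphisms of groups.

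The only subtle point, and the only reason one cannot simply quote Corollary \ref{homotopy invariance}, is that a categorical equivalence of groupoids $a$ priori identifies automorphism groups only up to a choice of connecting morphism. The pointedness of $\phi$ and $\psi$ is precisely what guarantees that the natural isomorphism at $x_0$ produced by Proposition \ref{natural transformation} lives in the correct automorphism group $\pi_1^{s}(X,x_0)$, so that the ambiguity reduces to an inner automorphism and $\phi_*,\psi_*$ themselves, rather than some non-canonical conjugates, are the desired isomorphisms.
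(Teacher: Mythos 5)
Your proof is correct and follows essentially the same route as the paper: the paper deduces this corollary immediately from Corollary \ref{homotopy invariance}, which is itself a direct consequence of Proposition \ref{natural transformation}, and your conjugation computation at the basepoint is exactly the standard verification that the equivalence $\Pi_1^{s}(\phi)$ is fully faithful and therefore restricts to an isomorphism $\pi_1^{s}(X,x_0)\cong\pi_1^{s}(Y,y_0)$, the correct automorphism group being hit because $\phi$ is basepoint-preserving. Your concern about the connecting morphism is thus already settled by the pointedness of $\phi$ itself, and spelling out that the composite $\psi_*\circ\phi_*$ is an inner automorphism is a harmless, correct elaboration of what the paper leaves implicit.
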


The sequence \eqref{sequence Pi} of $r$-fundamental groupoids restricts to the sequence of $r$-fundamental groups
\begin{equation}
  \label{sequence pi}
  \pi_1^{1}(X,x_0)\to\pi_1^{2}(X,x_0)\to\pi_1^{3}(X,x_0)\to\cdots
\end{equation}
where all maps are surjective.

\begin{proof}
  [Proof of Theorem \ref{main 1}]
  For $2\le r\le\infty$, there is the Hurewicz map
  \[
    \pi_1(|\mathrm{N}^r(X)|,x_0)\to H_1(|\mathrm{N}^r(X)|)
  \]
  which is identified with the abelianization. By Lemma \ref{realization}, there is a natural isomorphism $\pi_1(|\mathrm{N}^r(X)|,x_0)\cong\pi_1^r(X,x_0)$. Clearly, there is a natural isomorphism $H_1(|\mathrm{N}^r(X)|)\cong H_1(\mathrm{N}^r(X))=H_1(F_r\mathrm{RC}_*(X))$ too, where we set $F_\infty\mathrm{RC}_*(X)=\mathrm{RC}_*(X)$. Then the above Hurewicz map induces a natural map
  \[
    h^r\colon\pi_1^r(X,x_0)\to H_1(F_r\mathrm{RC}_*(X))
  \]
  which is identified with the abelianization. Hence, the $r=\infty$ case is proved. By the naturality of the map $h^r$, there is a commutative diagram
  \[
    \xymatrix{
      \pi_1^r(X,x_0)\ar[r]\ar[d]_{h^r}&\pi_1^{r+1}(X,x_0)\ar[d]^{h^{r+1}}\\
      H_1(F_r\mathrm{RC}_*(X))\ar[r]&H_1(F_{r+1}\mathrm{RC}_*(X))
    }
  \]
  where the horizontal maps are the natural maps. Then as the top and the right maps are surjective, so is the bottom map too. In particular, the map
  \[
    H_1(F_1\mathrm{RC}_*(X))\to H_1(F_r\mathrm{RC}_*(X))
  \]
  is surjective. On the other hand, by \cite[Chapter XV, Section 1, (8)]{CE}, there is a natural isomorphism
  \[
    E^r_{1,0}\cong\mathrm{Im}\{H_1(F_1\mathrm{RC}_*(X))\to H_1(F_r\mathrm{RC}_*(X)/F_0\mathrm{RC}_*(X))\}
  \]
  for $r\ge 2$. Since $F_0\mathrm{RC}_1(X)=0$, we get $H_1(F_0\mathrm{RC}_*(X))=0$, implying that the map
  \[
    H_1(F_r\mathrm{RC}_*(X))\to H_1(F_r\mathrm{RC}_*(X)/F_0\mathrm{RC}_*(X))
  \]
  is injective. Thus we obtain a natural isomorphism
  \[
    E^r_{1,0}\cong H_1(F_r\mathrm{RC}_*(X))
  \]
  and therefore the $r<\infty$ case is proved, completing the proof.
\end{proof}

%%%%% Section 4.3 %%%%%

\subsection{Hurewicz map}

We consider a combinatorial description of the Hurewicz map $h^r$. Let $X$ be a directed graph, and let $f\colon I\to X$ be a path for $I\in\I_n$. We define an element $h(f)$ of $F_1\mathrm{RC}_1(X)$ by
\[
  h(f)=\sum_{(i,i+1)\in E(I)}f(i)f(i+1)-\sum_{(i+1,i)\in E(I)}f(i+1)f(i).
\]

\begin{lemma}
  \label{hur}
  Let $f$ and $g$ be paths in a directed graph $X$.

  \begin{enumerate}
    \item $h(f\cdot g)=h(f)+h(g)$.

    \item $h(\bar{f})=-h(f)$.

    \item If $f$ is a loop, then $\partial(h(f))=0$.
  \end{enumerate}
\end{lemma}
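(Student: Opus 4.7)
The plan is to prove the three parts by direct computation from the definition of $h$, since each claim follows from the structure of the edge set $E(I)$ and the boundary operator on $\mathrm{RC}_*(X)$. Nothing deep is required; the role of the lemma is to record routine bookkeeping about the Hurewicz map at the chain level. Throughout, one should keep in mind that if $f(i)=f(i+1)$ for some edge of $I$, then the corresponding term $f(i)f(i+1)$ or $f(i+1)f(i)$ vanishes in the normalized Moore complex, so there is no issue with well-definedness.

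For part~(1), write $f\colon I\to X$ and $g\colon J\to X$ with $I\in\mathcal{I}_m$ and $J\in\mathcal{I}_n$. By the definition of $I\cdot J$, the edge set $E(I\cdot J)$ is the disjoint union of $E(I)$ and the shift $\{(x+m,y+m)\mid (x,y)\in E(J)\}$. Combined with the formula defining $(f\cdot g)(i)$, the sum defining $h(f\cdot g)$ splits along this disjoint union into precisely the sums defining $h(f)$ and $h(g)$.

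For part~(2), I would use the substitution $j=n-1-i$ in the sum for $h(\bar f)$. The edge reversal defining $\bar I_n$ gives $(i,i+1)\in E(\bar I_n)$ if and only if $(j+1,j)\in E(I_n)$, and by the definition $\bar f(k)=f(n-k)$ one has $\bar f(i)\bar f(i+1)=f(j+1)f(j)$. Thus the first sum in $h(\bar f)$ equals $\sum_{(j+1,j)\in E(I_n)} f(j+1)f(j)$, which is the negative of the second sum in $h(f)$; the second sum in $h(\bar f)$ handles the other orientation symmetrically. (Alternatively one could prove (2) by induction on $n$ using (1): the base case $n=1$ is a direct check in either orientation, and the induction step follows by writing $f=f'\cdot e$ with $e$ the last edge.)

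For part~(3), note that $\partial(xy)=y-x$ on a generator of $\mathrm{RC}_1(X)$. Applied termwise,
\[
  \partial(h(f))=\sum_{(i,i+1)\in E(I)}\bigl(f(i+1)-f(i)\bigr)-\sum_{(i+1,i)\in E(I)}\bigl(f(i)-f(i+1)\bigr)=\sum_{i=0}^{n-1}\bigl(f(i+1)-f(i)\bigr),
\]
since each $i\in\{0,\dots,n-1\}$ contributes the term $f(i+1)-f(i)$ regardless of the orientation of the edge between positions $i$ and $i+1$ in $I$. The right-hand sum telescopes to $f(n)-f(0)$, which vanishes because $f$ is a loop.

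The ``hard part'' is really just the sign bookkeeping in part~(2) and the observation in part~(3) that the minus sign attached to edges of the form $(i+1,i)$ in the definition of $h$ is exactly what is needed so that both orientations contribute the same term $f(i+1)-f(i)$ after applying $\partial$, producing a clean telescoping sum.
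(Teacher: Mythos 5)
Your proposal is correct and follows essentially the same route as the paper: parts (1) and (2) are the routine bookkeeping the paper dismisses as obvious, and part (3) is proved by exactly the same termwise application of $\partial$ followed by the telescoping sum $\sum_{i=0}^{n-1}(f(i+1)-f(i))=f(n)-f(0)$. Your additional remark about degenerate terms vanishing in the normalized complex is a harmless extra precaution.
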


\begin{proof}
  (1) and (2) are obvious. Observe that if $f$ is a map from some element of $\I_n$, then
  \begin{align*}
    \partial(h(f))&=\sum_{(i,i+1)\in E(X)}\partial(f(i)f(i+1))-\sum_{(i+1,i)\in E(X)}\partial(f(i+1)f(i))\\
    &=\sum_{(i,i+1)\in E(X)}(f(i+1)-f(i))-\sum_{(i+1,i)\in E(X)}(f(i)-f(i+1))\\
    &=\sum_{i=0}^{n-1}(f(i+1)-f(i))\\
    &=f(n)-f(0).
  \end{align*}
  Thus $\partial(h(f))=0$ whenever $f$ is a loop, proving (3).
\end{proof}

\begin{lemma}
  \label{theta}
  Let $X$ be a directed graph. Then for a map $f\colon\Gamma_r\to X$ with $r\ge 1$ and any vertex $x$ of $\Gamma_r$, we have
  \[
    h(f\circ\rho_x)=\sum_{i=0}^{r-1}(f(u_i)f(u_{i+1})-f(v_i)f(v_{i+1})).
  \]
\end{lemma}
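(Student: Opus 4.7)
The plan is to compute $h(f\circ\rho_x)$ directly from the definitions. Recall that for $r\ge 2$, the path $\rho_x$ is the unique clockwise reduced loop in $\Gamma_r$ based at $x$; since $\Gamma_r$ is topologically a cycle of length $2r$, going around it once clockwise traverses each edge $(u_i,u_{i+1})$ exactly once in the forward direction and each edge $(v_i,v_{i+1})$ exactly once in the reverse direction. This is the combinatorial content of the lemma, and the right-hand side is manifestly independent of $x$.

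First I would dispose of $r=1$ by inspection: the two choices $x=u_0$ and $x=u_1$ give loops $u_0\to u_1\to u_0$ and $u_1\to u_0\to u_1$ respectively, and applying the definition of $h$ to each gives $f(u_0)f(u_1)-f(v_0)f(v_1)$, matching the claim since $u_0=v_0$ and $u_1=v_1$.

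For $r\ge 2$, I would treat the case $x=u_k$ (with $0\le k\le r$) and note that the case $x=v_k$ is entirely symmetric. Decompose $\rho_{u_k}$ as a concatenation of three subpaths: (i) the forward portion of the $u$-path from $u_k$ to $u_r=v_r$; (ii) the $v$-path traversed from $v_r$ down to $v_0$ (each of whose edges is a ``reversed'' edge of the domain path graph, mapping onto $(v_j,v_{j+1})\in E(\Gamma_r)$); (iii) the forward portion of the $u$-path from $u_0=v_0$ to $u_k$. By Lemma \ref{hur}(1), $h(f\circ\rho_{u_k})$ splits as the sum of the three corresponding contributions. Pieces (i) and (iii) contribute $\sum_{i=k}^{r-1}f(u_i)f(u_{i+1})$ and $\sum_{i=0}^{k-1}f(u_i)f(u_{i+1})$, which combine to give $\sum_{i=0}^{r-1}f(u_i)f(u_{i+1})$. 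Piece (ii) consists entirely of reversed edges: by the definition of $h$, each such edge contributes $-f(v_j)f(v_{j+1})$, and summing gives $-\sum_{i=0}^{r-1}f(v_i)f(v_{i+1})$. Adding yields the claimed formula.

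There is no genuine obstacle here; the argument is essentially bookkeeping. The only care required is with the sign convention in the definition of $h$, namely that an edge $(i+1,i)\in E(I)$ appears with a minus sign, which is exactly what converts the downward traversal of the $v$-path in step (ii) into the subtraction of $f(v_i)f(v_{i+1})$ terms. Alternatively, one can first observe by Lemma \ref{hur}(1) that cyclically shifting the basepoint $x$ only cyclically permutes the summands in $h(f\circ\rho_x)$, hence the value is independent of $x$, reducing the whole lemma to the single computation for $x=u_0$ (where $\rho_{u_0}$ splits as just pieces (i) and (ii)).
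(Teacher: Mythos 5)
Your proposal is correct and follows essentially the same route as the paper: split $\rho_{u_k}$ at the basepoint into the forward $u$-segments and the reversed $v$-path, use additivity of $h$ under concatenation, and observe that the sign convention in the definition of $h$ turns the reversed $v$-edges into the subtracted terms. The paper's proof is exactly this computation written as a single displayed sum, so there is nothing to add.
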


\begin{proof}
  If $x=u_k$, then
  \begin{align*}
    h(f\circ\rho_x)&=\sum_{i=k}^{r-1}f(u_i)f(u_{i+1})-\sum_{i=0}^{r-1}f(v_{r-i-1})f(v_{n-i})+\sum_{i=0}^{k-1}f(u_i)f(u_{i+1})\\
    &=\sum_{i=0}^{r-1}(f(u_i)f(u_{i+1})-f(v_i)f(v_{i+1})).
  \end{align*}
  The $x=v_k$ case can be similarly proved.
\end{proof}

Let $Z$ be the kernel of the boundary map $\partial\colon F_1\mathrm{RC}_1(X)\to F_1\mathrm{RC}_0(X)$. Then for $r\ge 2$,
\begin{equation}
  \label{Z-E}
  E^r_{1,0}(X)=Z/Z\cap\partial F_r\mathrm{RC}_2(X).
\end{equation}
In particlar, there is the canonical projection
\[
  Z\to E^1_{1,0}(X)
\]
By Lemma \ref{hur}, the map $h\colon\pi_1^{1}(X,x_0)\to Z$ is a well-defined homomorphism. For $r\ge 2$, we define
\[
  \tilde{h}^r\colon\pi_1^{r}(X,x_0)\to E^r_{1,0}(X),\quad[f]\mapsto[\tilde{h}(f)].
\]

\begin{proposition}
  For $r\ge 2$, the map $\tilde{h}^r$ is a well-defined homomorphism.
\end{proposition}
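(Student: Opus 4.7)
The plan is to split the claim into three sub-tasks: (a) verify that $h(f)$ genuinely lies in $Z$ for every loop $f$ at $x_0$, so that $[h(f)]$ makes sense in $E^r_{1,0}(X)=Z/(Z\cap\partial F_r\mathrm{RC}_2(X))$ via \eqref{Z-E}; (b) verify $C_r$-homotopy invariance, i.e.\ if $f\approx_r g$ as loops then $h(f)-h(g)\in\partial F_r\mathrm{RC}_2(X)$; and (c) deduce the homomorphism property. Part (a) is immediate from Lemma \ref{hur}(3), and part (c) is immediate from Lemma \ref{hur}(1) once the map is well-defined. So the entire content is (b), and I expect the filtration bound for the witnessing $2$-chain to be the main obstacle.

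For (b), by transitivity of $\approx_r$ together with parts (1) and (2) of Lemma \ref{hur}, it suffices to handle a single elementary $C_r$-homotopy $f\to_r g$ with $f=f_1\cdot f_2$ and $g=f_1\cdot(\varphi\circ\rho_x)\cdot f_2$ for some $\varphi\colon\Gamma_s\to X$ with $s\in\{0,r\}$ and some vertex $x$ of $\Gamma_s$. By Lemma \ref{hur}(1) we then have $h(g)-h(f)=h(\varphi\circ\rho_x)$, so the task reduces to showing
\[
  h(\varphi\circ\rho_x)\in\partial F_r\mathrm{RC}_2(X).
\]
The case $s=0$ is trivial since $\varphi\circ\rho_x$ is then a constant path and $h$ vanishes on it. Assume therefore $s=r\ge 2$, and by Lemma \ref{theta} we may rewrite the right-hand side as $\sum_{i=0}^{r-1}\bigl(\varphi(u_i)\varphi(u_{i+1})-\varphi(v_i)\varphi(v_{i+1})\bigr)$, independent of the chosen vertex $x$.

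The key construction is the $2$-chain
\[
  c=\sum_{i=1}^{r-1}\Bigl[\bigl(\varphi(u_0),\varphi(u_i),\varphi(u_{i+1})\bigr)-\bigl(\varphi(v_0),\varphi(v_i),\varphi(v_{i+1})\bigr)\Bigr].
\]
I would first check $c\in F_r\mathrm{RC}_2(X)$: the directed path $u_0\to u_1\to\cdots\to u_i$ in $\Gamma_r$ maps under $\varphi$ to a directed walk of length at most $i$ in $X$ (edges may collapse but cannot lengthen), so $d(\varphi(u_0),\varphi(u_i))\le i$; combined with $d(\varphi(u_i),\varphi(u_{i+1}))\le 1$ this yields $L\le i+1\le r$, and similarly for the $v$-summands. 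A direct telescoping computation of $\partial c$, in which the length-$\ge 2$ simplices $(\varphi(u_0),\varphi(u_{i+1}))$ and $(\varphi(v_0),\varphi(v_{i+1}))$ cancel in pairs for $i=1,\dots,r-2$, and in which the boundary terms at $i=1$ and $i=r-1$ combine using $\varphi(u_0)=\varphi(v_0)$ and $\varphi(u_r)=\varphi(v_r)$, produces exactly the formula of Lemma \ref{theta} for $h(\varphi\circ\rho_x)$. (The degenerate simplex at $i=0$ would be normalized to zero and is the reason the sum starts at $i=1$.) This establishes $h(\varphi\circ\rho_x)=\partial c\in\partial F_r\mathrm{RC}_2(X)$, completing (b); combining (a), (b), (c) gives the proposition. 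The only delicate point throughout is the filtration bookkeeping in the definition of $c$, and the symmetry between $u_*$ and $v_*$ makes the cancellation on the boundary transparent once that bound is in hand.
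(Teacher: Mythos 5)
Your proposal is correct and follows essentially the same route as the paper: reduce to a single elementary $C_r$-homotopy, compute $h(g)-h(f)=h(\varphi\circ\rho_x)$ via Lemmas \ref{hur} and \ref{theta}, and exhibit an explicit cone-type $2$-chain in $F_r\mathrm{RC}_2(X)$ whose boundary telescopes to this difference. The only difference is cosmetic — you cone off the loop from the initial vertex $u_0=v_0$ (simplices $(\varphi(u_0),\varphi(u_i),\varphi(u_{i+1}))$), whereas the paper cones toward the terminal vertex $u_r=v_r$ (simplices $(k(u_i),k(u_{i+1}),k(u_r))$) — and your filtration estimate $L\le i+1\le r$ plays the same role as the paper's bound.
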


\begin{proof}
  Let $r\ge 2$. Let $f$ and $g$ be loops in $X$ based at $x_0$. Suppose that $f\to_rg$. Let $f,g$ be paths in $X$ satisfying $f\to_rg$. Then there is a map $k\colon\Gamma_r\to X$ such that $f=f_1\cdot f_2$ and $g=f_1\cdot(k\circ\rho_x)\cdot f_2$ for some paths $f_1,f_2$ in $X$ and some vertex $x$ of $\Gamma_r$. Hence by Lemmas \ref{hur} and \ref{theta},
  \begin{align*}
    h(g)-h(f)&=(h(f_1)+h(k\circ\rho_x)+h(f_2))-(h(f_1)+h(f_2))\\
    &=h(k\circ\rho_x)\\
    &=\sum_{i=0}^{r-1}(k(u_i)k(u_{i+1})-k(v_i)k(v_{i+1})).
  \end{align*}
  We set
  \[
    \sigma=\sum_{i=0}^{r-2}(k(u_i)k(u_{i+1})k(u_r)-k(v_i)k(v_{i+1})k(v_r))
  \]
  which is an element of $F_r\mathrm{RC}_2(X)$. Then
  \begin{align*}
    \partial\sigma&=\sum_{i=0}^{r-2}(k(u_{i+1})k(u_r)-k(u_i)h(u_r)+k(u_i)k(u_{i+1})\\
    &\quad-k(v_{i+1})k(v_r)+k(v_i)k(v_r)-k(v_i)k(v_{i+1}))\\
    &=\sum_{i=0}^{r-1}(k(u_i)k(u_{i+1})-k(v_i)k(v_{i+1}))\\
    &=h(g)-h(f).
  \end{align*}
  Thus by \eqref{Z-E}, the map $h^r$ is well-defined. By Lemma \ref{hur}, the map $h^r$ is a homomorphism, completing the proof.
\end{proof}

\begin{proposition}
  \label{Hurewicz map}
  For any connected pointed directed graph, $\tilde{h}^r$ coincides with $h^r$.
\end{proposition}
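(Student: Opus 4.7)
My plan is to verify the equality $h^r = \tilde{h}^r$ by tracing the image of the class $[f]$ of a combinatorial loop $f \colon I \to X$ through the stages defining $h^r$ in the proof of Theorem \ref{main 1}. Recall that $h^r$ is the composite
\[
  \pi_1^r(X,x_0) \xrightarrow{\cong} \pi_1(|\mathrm{N}^r(X)|,x_0) \xrightarrow{h_{\mathrm{top}}} H_1(|\mathrm{N}^r(X)|) \cong H_1(F_r\mathrm{RC}_*(X)) \cong E^r_{1,0}(X),
\]
where the first isomorphism is Lemma \ref{realization}, $h_{\mathrm{top}}$ is the classical topological Hurewicz map, the middle identification uses the normalized Moore complex of the nerve, and the last is the isomorphism established inside the proof of Theorem \ref{main 1}. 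The target being abelian, it suffices to check that this composite agrees with $\tilde{h}^r$ on every class $[f]$.

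First I would make explicit the image of $[f]$ under the isomorphism of Lemma \ref{realization} combined with Proposition \ref{algebric description}. For $f$ with underlying zig-zag $I \in \I_n$, this image is the class of the edge-path loop in $|\mathrm{N}^r(X)|$ obtained by concatenating, for $i = 0, \ldots, n-1$, the $1$-simplex $(f(i),f(i+1))$ traversed positively when $(i,i+1) \in E(I)$ and the $1$-simplex $(f(i+1),f(i))$ traversed negatively when $(i+1,i) \in E(I)$; degenerate $1$-simplices arising when $f(i) = f(i+1)$ are identities in the edge-path group and contribute trivially. This is precisely the translation built into the functor $G^r$ constructed in Section \ref{Combinatorial description}.

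Next I would invoke the classical combinatorial description of the topological Hurewicz map restricted to the edge-path subgroup of $\pi_1(|K|)$ for a simplicial set $K$: an edge-path loop represented by a word $\sigma_1^{\epsilon_1}\cdots\sigma_m^{\epsilon_m}$ in $1$-simplices $\sigma_i$ with signs $\epsilon_i \in \{\pm 1\}$ is sent to the $1$-cycle $\sum_{i=1}^m \epsilon_i\,\sigma_i$ in the normalized Moore complex. Applied with $K = \mathrm{N}^r(X)$ to the loop from the previous paragraph, this produces exactly $h(f) \in F_1\mathrm{RC}_1(X) \subseteq F_r\mathrm{RC}_1(X)$, whose class in $H_1(F_r\mathrm{RC}_*(X))$ is $[h(f)]$. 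Passing through the final identification $H_1(F_r\mathrm{RC}_*(X)) \cong E^r_{1,0}(X)$ yields $\tilde{h}^r([f])$ by the very definition of $\tilde{h}^r$. Therefore $h^r([f]) = \tilde{h}^r([f])$ for every loop, as required.

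The hard part will be the classical combinatorial description of the topological Hurewicz map on edge-path loops quoted above. It is standard and closely related to the edge-path theorem underlying Lemma \ref{realization}, but a careful writeup requires comparing the geometric realization of an edge-path loop with its image in simplicial $1$-chains, taking care that degenerate $1$-simplices contribute trivially both to the edge-path group and to the normalized Moore complex. Connectedness of $X$ does not enter the comparison itself; it is inherited from Theorem \ref{main 1}, where it ensures $|\mathrm{N}^r(X)|$ is path-connected so that the topological Hurewicz map is identified with abelianization in the form recorded there.
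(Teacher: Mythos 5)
Your argument is correct, but it takes a genuinely different route from the paper. You verify the equality directly at level $r$: you trace a class $[f]$ through the chain $\pi_1^r(X,x_0)\cong\pi_1(|\mathrm{N}^r(X)|,x_0)\to H_1(|\mathrm{N}^r(X)|)\cong H_1(F_r\mathrm{RC}_*(X))\cong E^r_{1,0}(X)$ and match the output against $[h(f)]$, which requires the classical combinatorial description of the topological Hurewicz map on edge-path loops of the simplicial set $\mathrm{N}^r(X)$ (the step you rightly flag as the hard part, including the bookkeeping with degenerate $1$-simplices in the normalized Moore complex). The paper instead only invokes that description at $r=1$, where it is the most elementary instance --- $h$ is by definition the Hurewicz map $\pi_1(\mathrm{N}_1(X),x_0)\to H_1(\mathrm{N}_1(X))=Z$ for the underlying undirected graph --- and then transports the identification to all $r\ge 2$ by naturality: both $h^r$ and $\tilde h^r$ fit into commuting ladders over the surjections $\pi_1^1(X,x_0)\to\pi_1^r(X,x_0)$ and $Z\to E^r_{1,0}(X)$, so agreement at level $1$ forces agreement at level $r$. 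Your approach buys a self-contained, representative-by-representative computation and makes the formula $[f]\mapsto[h(f)]$ transparent at every level; the paper's approach buys economy, since the only geometric input needed is the edge-path/first-homology comparison for a graph, with surjectivity (already established en route to Theorem \ref{main 1}) doing the rest. One point worth making explicit if you write yours up: the final identification $H_1(F_r\mathrm{RC}_*(X))\cong E^r_{1,0}(X)$ from the proof of Theorem \ref{main 1} must be checked to send the class of a cycle $z\in Z$ to its class in $Z/\bigl(Z\cap\partial F_r\mathrm{RC}_2(X)\bigr)$, which follows from \eqref{Z-E} but is used silently in your last step.
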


\begin{proof}
  By definition, $h$ coincides with the Hurewicz map
  \[
    \pi_1^1(X,x_0)=\pi_1(\mathrm{N}_1(X),x_0)\to H_1(\mathrm{N}_1(X))=Z.
  \]
  There is a commutative diagram
  \[
    \xymatrix{
      \pi_1^1(X,x_0)\ar[r]\ar[d]^h&\pi_1^2(X,x_0)\ar[r]\ar[d]^{\tilde{h}^2}&\pi_1^3(X,x_0)\ar[r]\ar[d]^{\tilde{h}^3}&\cdots\\
      Z\ar[r]&E^2_{1,0}(X)\ar[r]&E^3_{1,0}(X)\ar[r]&\cdots
    }
  \]
  where the horizontal maps are the canonical ones. Since the horizontal maps are surjections, the maps $\tilde{h}^i$ for $i\ge 2$ are determined by the map $h$. Thus the statement follows.
\end{proof}

\begin{remark}
  The proof of Proposition \ref{Hurewicz map} implies that the diagram \ref{ladder h} extends to the left by the commutative diagram
  \[
  \xymatrix{
    \pi_1^1(X,x_0)\ar[r]\ar[d]_{h}&\pi_1^2(X,x_0)\ar[d]^{h^2}\\
    Z\ar[r]&E^2_{1,0}(X).
  }
  \]
\end{remark}

%%%%% Subsection 4.3 %%%%%

\subsection{Differential in the MPSS}

We consider how the directed graph $\Gamma_r$ is related with a differential in the MPSS. Let $r\ge 2$, and let $X$ be a directed graph. It is straightforward to see that $E^0_{p,q}(\Gamma_r)$ is the free abelian group generated by
\[
  (u_{i_0},\ldots,u_{i_{p+q}})\quad\text{and}\quad(v_{i_0},\ldots,v_{i_{p+q}})
\]
where $0\le i_0<\cdots<i_{p+q}\le r$ and $i_{p+q}-i_0=p$, and the differential is given by $d^0u_i=d^0v_i=0$ and
\begin{align*}
  d^0(u_{i_0},\ldots,u_{i_{p+q}})&=\sum_{k=1}^{p+q-1}(-1)^k(u_{i_0},\ldots,\widehat{u_{i_k}},\ldots,u_{i_{p+q}})\\
  d^0(v_{i_0},\ldots,v_{i_{p+q}})&=\sum_{k=1}^{p+q-1}(-1)^k(v_{i_0},\ldots,\widehat{v_{i_k}},\ldots,v_{i_{p+q}}).
\end{align*}
Then it is straightforward to see that
\[
  E_{p,q}^1(\Gamma_r)=
  \begin{cases}
    \Z\{u_0,\ldots,u_r,v_0,\ldots,v_r\}&(p,q)=(0,0)\\
    \Z\{(u_i,u_{i+1}),(v_i,v_{i+1})\mid i=0,1,\ldots,r-1\}&(p,q)=(1,0)\\
    \Z\{[(u_0,u_1,u_r)-(v_0,v_1,v_r)]\}&(p,q)=(r,2-r)\\
    0&\text{otherwise}
  \end{cases}
\]
where $\Z S$ denotes the free abelian group generated by a set $S$. Observe that
\[
  d^1(u_i,u_{i+1})=u_{i+1}-u_i\quad\text{and}\quad d^1(v_i,v_{i+1})=v_{i+1}-v_i.
\]
Thus we get
\[
  E_{p,q}^2(\Gamma_r)\cong
  \begin{cases}
    \Z&(p,q)=(1,0),(r,2-r)\\
    0&\text{otherwise}.
  \end{cases}
\]
By degree reasons, $E^2(\Gamma_r)=\cdots=E^{r-1}(\Gamma_r)$, and so a possible first nontrivial diffrential is $d^{r-1}$. By Proposition \ref{1-fundamental groupoid}, $\pi_1^1(\Gamma_r,u_0)$ is a free abelian group generated by a loop $\rho_{u_0}\colon I\to\Gamma_r$ for $I\in\I_{2n}$. Then $\pi_1^r(\Gamma_r,u_0)=0$ as $\rho_{u_0}\approx_rc_{u_0}$, and so by Theorem \ref{main 1}, $E^r_{1,0}(\Gamma_r)=0$. Thus we obtain:

\begin{proposition}
  \label{differential}
  For $2\le r<\infty$, the differential
  \[
    d^{r-1}\colon E^{r-1}_{r,2-r}(\Gamma_r)\to E^{r-1}_{1,0}(\Gamma_r)
  \]
  is an isomorphism, where $E^{r-1}_{r,2-r}(\Gamma_r)\cong E^{r-1}_{1,0}(\Gamma_r)\cong\Z$.
\end{proposition}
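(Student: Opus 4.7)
My plan is to combine the two ingredients already laid out in the preceding discussion: the explicit computation of the $E^2$-page of $\Gamma_r$ and the vanishing $\pi_1^r(\Gamma_r,u_0)=0$, and then pipe them through Theorem \ref{main 1}. I would focus on the case $r\ge 3$; the $r=2$ boundary case is handled by a short separate check.

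First, from the computation that $E^2(\Gamma_r)$ is concentrated in bidegrees $(0,0)$, $(1,0)$, and $(r,2-r)$, with the latter two both isomorphic to $\Z$, I would observe that for $2\le s\le r-2$ no differential $d^s$ can connect two nonzero bidegrees of $E^2$: $d^s$ shifts bidegrees by $(-s,s-1)$, and the nonzero pair $(1,0)$ and $(r,2-r)$ does not differ by such a vector in this range. Hence all intermediate differentials vanish, giving stationarity $E^2\cong E^3\cong\cdots\cong E^{r-1}$, and in particular $E^{r-1}_{r,2-r}(\Gamma_r)\cong E^{r-1}_{1,0}(\Gamma_r)\cong\Z$.

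Second, I would pin down $E^r_{1,0}(\Gamma_r)$ via Theorem \ref{main 1}. By Proposition \ref{1-fundamental groupoid}, $\pi_1^1(\Gamma_r,u_0)\cong\Z$ is generated by the clockwise reduced loop $\rho_{u_0}$; but the identity map $\Gamma_r\to\Gamma_r$ itself serves as the map $h$ in clause (2) of Definition \ref{n-homotopy def}, directly witnessing $\rho_{u_0}\approx_r c_{u_0}$. Since $\pi_1^1\to\pi_1^r$ is surjective and kills this generator, $\pi_1^r(\Gamma_r,u_0)=0$, so by Theorem \ref{main 1}, $E^r_{1,0}(\Gamma_r)=0$. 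On the $E^{r-1}$-page, the outgoing differential from $(1,0)$ lands at $(2-r,r-2)$ which sits outside the support of $E^2$ and hence vanishes, so $E^r_{1,0}(\Gamma_r)=\mathrm{coker}(d^{r-1})$. Combining with the vanishing, $d^{r-1}\colon\Z\to\Z$ must be surjective, and any surjective endomorphism of $\Z$ is an isomorphism.

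The whole argument is essentially a bookkeeping exercise once the explicit $E^2$-page and Theorem \ref{main 1} are in hand; the only mildly subtle point is the degree count confirming that $d^{r-1}$ is the sole differential interacting with $E^{r-1}_{1,0}$, which is immediate from the sparsity of $E^2$. The $r=2$ boundary case, where the stabilization chain is vacuous, is dispatched by directly computing $d^1\colon E^1_{2,0}(\Gamma_2)\to E^1_{1,0}(\Gamma_2)$ and verifying that its image lands precisely on the kernel of $d^1\colon E^1_{1,0}(\Gamma_2)\to E^1_{0,0}(\Gamma_2)$.
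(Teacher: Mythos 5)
Your argument is correct and is essentially the paper's own proof: the paper likewise computes $E^1(\Gamma_r)$ and $E^2(\Gamma_r)$ explicitly, invokes degree reasons to get $E^2(\Gamma_r)=\cdots=E^{r-1}(\Gamma_r)$, deduces $\pi_1^r(\Gamma_r,u_0)=0$ from $\rho_{u_0}\approx_r c_{u_0}$, and concludes $E^r_{1,0}(\Gamma_r)=0$ via Theorem \ref{main 1}, so that $d^{r-1}$ is a surjective endomorphism of $\Z$ and hence an isomorphism. Your explicit identification $E^r_{1,0}(\Gamma_r)=\operatorname{coker}(d^{r-1})$ (using that the outgoing differential vanishes) and your separate treatment of $r=2$ only fill in details the paper leaves implicit; note that your honest $r=2$ computation would actually show $E^1_{1,0}(\Gamma_2)\cong\Z^4$ rather than $\Z$, so that in this boundary case the correct conclusion is that $d^1$ is injective with image $\ker\bigl(d^1\colon E^1_{1,0}(\Gamma_2)\to E^1_{0,0}(\Gamma_2)\bigr)$, giving $E^2_{1,0}(\Gamma_2)=0$, rather than the literal isomorphism statement.
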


\begin{remark}
  In fact, we can prove the following stronger result. Let $X$ be a directed graph, and let $2\le r<\infty$. For any $x\in E_{r,2-r}^{r-1}(X)$, there are maps $f_1,\ldots,f_n\colon\Gamma_r\to X$ such that $x$ is a linear combination of $(f_1)_*(y),\ldots,(f_ns)_*(y)$, where $n$ depends on $x$ and $y$ is a generator of $E^{r-1}_{r,2-r}(\Gamma_r)\cong\Z$. The proof is a complicated and long, so we omit it here.
\end{remark}

%%%%% Section 4 %%%%%

\section{Seifert-van Kampen theorem}

In this section, we define the $r$-separability of a pair of directed graphs, and prove Theorem \ref{main 2}. As its application, we also prove the Seifert-van Kampen theorem for the pushout along an $r$-cofibration of directed graphs, which is a relaxation of a cofibration defined by Carranza \textit{et al.} \cite{CD}.

%%%%% Subsection 4.1 %%%%%

\subsection{$r$-Separability}

We begin with an example showing that without assuming any condition, the Seifert-van Kampen theorem for the $r$-fundamental groupoid does not hold.

\begin{example}
  Let $X$ and $Y$ be the following directed graphs.

  \begin{figure}[H]
    \centering
    \begin{tikzpicture}[x=5mm, y=5mm, thick]
      \draw[myarrow=.6](0,0)--(2,0);
      \draw(2,0)--(3,0);
      \draw[dashed](3,0)--(5,0);
      \draw(5,0)--(6,0);
      \draw[myarrow=.6](6,0)--(8,0);
      \draw[myarrow=.6](0,0)--(4,3);
      \draw[myarrow=.6](4,3)--(8,0);
      \fill[black](0,0) node[below]{$x_0$} circle(2pt);
      \fill[black](2,0) node[below]{$x_1$} circle(2pt);
      \fill[black](6,0) node[below]{$x_r$} circle(2pt);
      \fill[black](8,0) node[below]{$x_{r+1}$} circle(2pt);
      \fill[black](4,3) node[above]{$a$} circle(2pt);
      \draw[myarrow=.6](12,0)--(14,0);
      \draw(14,0)--(15,0);
      \draw[dashed](15,0)--(17,0);
      \draw(17,0)--(18,0);
      \draw[myarrow=.6](18,0)--(20,0);
      \draw[myarrow=.6](12,0)--(16,3);
      \draw[myarrow=.6](16,3)--(20,0);
      \fill[black](12,0) node[below]{$x_0$} circle(2pt);
      \fill[black](14,0) node[below]{$x_1$} circle(2pt);
      \fill[black](18,0) node[below]{$x_r$} circle(2pt);
      \fill[black](20,0) node[below]{$x_{r+1}$} circle(2pt);
      \fill[black](16,3) node[above]{$b$} circle(2pt);
    \end{tikzpicture}
    \caption{The directed graphs $X$ and $Y$}
  \end{figure}
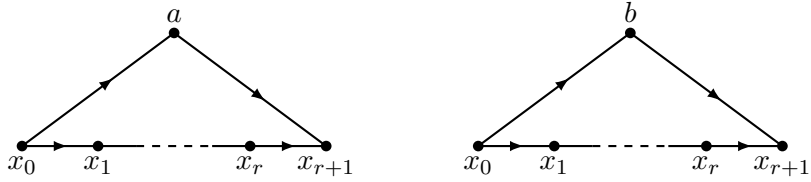

  \noindent Then we have $\pi_1^{r}(X,x_0)$ and $\pi_1^{r}(Y,x_0)$ are free abelian groups generated by the clockwise loops $\alpha_X$ and $\alpha_Y$, respectively. Since $X\cap Y$ is $1$-homotopy equivalent to a point, we have $\pi_1^{r}(X\cap Y,x_0)=0$. On the other hand, $\alpha_X\cdot\overline{\alpha_Y}$ is $C_1$-homotopic to a $C_2$-homotopy given by $\Gamma_2$ with vertices $x_0,x_{r+1},a,b$ in $X\cup Y$. Then for $r\ge 2$, $[\alpha_X][\alpha_Y]^{-1}=1$ in $\pi_1^{r}(X\cup Y,x_0)$, implying that the commutative diagram
  \[
    \xymatrix{
      \Pi_1^{r}(X\cap Y)\ar[r]\ar[d]&\Pi_1^{r}(X)\ar[d]\\
      \Pi_1^{r}(Y)\ar[r]&\Pi_1^{r}(X\cup Y)
    }
  \]
  is not a pushout.
\end{example}

Let $X$ be a directed graph, and let $A$ be a subgraph of $X$. We define the \emph{complement} $X-A$ to be the smallest subgraph of $X$ satisfying $X=(X-A)\cup A$. More explicitly, $X-A$ is a subgraph of $X$ defined by $E(X-A)=E(X)-E(A)$ and
\[
  V(X-A)=(V(X)-V(A))\cup s(E(X-A))\cup t(E(X-A))
\]
where $s,t\colon E(X)\to V(X)$ are maps given by $s(x,y)=x$ and $t(x,y)=y$.

\begin{example}
  Let $X$ be a directed graph. If $f\colon I\to X$ is a reduced loop and $g\colon J\to X$ is a reduced path from $f(k)$ to $f(l)$ for some $0\le k,l\le m$, where  $I\in\I_m$ and $J\in\I_n$. Then
  \[
    f(I)\cup g(J)-g(J)=f(I)
  \]
\end{example}

Let $f\colon I\to X$ be a reduced path in a directed graph $X$ with $I\in\I_n$. A \emph{subpath} of $f$ is the restriction of $f$ to the induced subgraph $I_{\{k,k+1,\ldots,l\}}$ for some $0\le k\le l\le n$. We say that a subgraph of a directed graph is represented by a path if it is the image of a path.

\begin{definition}
  Let $X$ be a directed graph, and let $L$ be a subgraph of $X$ represented by a reduced loop. A \emph{subdivision} of $L$ consists of two subgraphs $L_1$ and $L_2$ of $X$ represented by reduced loops such that $L_1\cap L_2$ is represented by subpaths of $L_1,L_2$ and
  \[
    L=L_1\cup L_2-L_1\cap L_2.
  \]
\end{definition}

%\begin{figure}[H]
%  \centering
%  \begin{tikzpicture}[x=5mm, y=5mm, thick]
%    \draw(0,0) circle[radius=3];
%    \draw(0,3)--(0,-3);
    %
%    \fill[black](0,3) circle(2pt);
%    \fill[black](0,-3) circle(2pt);
%    \draw (-2,0) node[right]{$L_1$};
%    \draw (2,0) node[left]{$L_2$};
%  \end{tikzpicture}
%\end{figure}

We can iteratedly subdivide a subgraph $L$ represented by a reduced loop, and the resulting subgraphs $L_1,\ldots,L_n$ represented by reduced loops are also called a subdivision of $L$. A subgraph of a directed graph $X$ is called a \emph{degenerate $\Gamma_r$} if it is obtained from $\Gamma_r$ by contracting some edges and identifying some vertices. Namely, a degenerate $\Gamma_r$ in $X$ is the image of a map $\Gamma_r\to X$. Then every $C_r$-homotopy in $X$ is given by a degenerate $\Gamma_r$ in $X$.

\begin{definition}
  Let $(X,Y)$ be a pair of directed graphs
  \begin{enumerate}
    \item For $1\le r<\infty$, $(X,Y)$ is \emph{$r$-separable} if any degenerate $\Gamma_r$ in $X\cup Y$ is subdivided into degenerate $\Gamma_r$'s, each of which is in either $X$ or $Y$.

    \item $(X,Y)$ is \emph{$\infty$-separable} if for any $1\le r<\infty$, any degenerate $\Gamma_r$ in $X\cup Y$ is subdivided into degenerate $\Gamma_s$'s for some $s\ge r$ possibly depending on the given degenerate $\Gamma_r$, each of which is in either $X$ or $Y$.
  \end{enumerate}
\end{definition}

Remark that any pair of directed graph is $1$-separable. We record a trivial example of an $r$-separable pair, for which Grigor'yan, Jimenez, and Muranov \cite{GJM} prove the Seifert-van Kampen theorem of the fundamental groupoids (= $2$-fundamental groupoids).

\begin{lemma}
  \label{trivially separable}
  Let $X$ and $Y$ be directed graphs. For $1\le r<\infty$, if every degenerate $\Gamma_r$ in $X\cup Y$ is in either $X$ or $Y$, then the pair $(X,Y)$ is $r$-separable.
\end{lemma}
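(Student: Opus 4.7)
The plan is essentially immediate from the definitions, which is presumably why the authors labelled the lemma \emph{trivially separable}. Fix $1\le r<\infty$ and let $D$ be an arbitrary degenerate $\Gamma_r$ contained in $X\cup Y$. To verify $r$-separability of $(X,Y)$, I have to exhibit a subdivision of $D$ into degenerate $\Gamma_r$'s each of which is contained in either $X$ or $Y$. The hypothesis applied to $D$ itself says that $D$ already lies entirely in $X$ or entirely in $Y$, so the trivial one-piece family $\{D\}$ serves as the required subdivision, and there is nothing further to check.

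The only subtle point is that the iterated notion of subdivision introduced immediately after the two-piece base case (``the resulting subgraphs $L_1,\ldots,L_n$'') must be read as allowing $n=1$, i.e.\ the zero-step base case of no actual subdivision. This is the natural convention, since an iterated process always includes the trivial zero-step iteration, and it is the reading under which the statement becomes meaningful. With this convention in hand the argument above completes the proof. Should a strictly non-trivial subdivision be demanded, one can instead insert a degenerate backtracking subpath at any vertex $w$ of the bounding loop of $D$ to split $D$ into two degenerate $\Gamma_r$'s whose union is $D$ and whose intersection is a subpath of each; since $D$ is contained in a single side, both pieces are as well. No real obstacle arises at any step.
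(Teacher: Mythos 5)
Your argument is correct and coincides with the paper's intent: the paper states this lemma without proof precisely because, as you observe, the hypothesis hands you the conclusion via the trivial (one-piece) subdivision. Your remark that the iterated notion of subdivision must be read as including the zero-step case is the right reading of the definition, and nothing further is needed.
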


Note that if $(X,Y)$ satisfies the condition of Lemma \ref{trivially separable} for all $1\le r<\infty$, then it is $\infty$-separable.

\begin{example}
  We give an example of an $r$-separable pair of directed graphs that does not satisfy the condition of Lemma \ref{trivially separable}. For $r\ge 2$, let us consider the following directed graph.

  \begin{figure}[H]
    \centering
    \begin{tikzpicture}[x=5mm, y=5mm, thick]
      \draw[myarrow=.6](-0.2,0)--(1,2);
      \draw[myarrow=.6](1,2)--(3,2);
      \draw(3,2)--(4,2);
      \draw[dashed](4,2)--(6,2);
      \draw(6,2)--(7,2);
      \draw[myarrow=.6](7,2)--(9,2);
      \draw[myarrow=.6](9,2)--(10.2,0);
      \draw[myarrow=.6](-0.2,0)--(1,-2);
      \draw[myarrow=.6](1,-2)--(3,-2);
      \draw(3,-2)--(4,-2);
      \draw[dashed](4,-2)--(6,-2);
      \draw(6,-2)--(7,-2);
      \draw[myarrow=.6](7,-2)--(9,-2);
      \draw[myarrow=.6](9,-2)--(10.2,0);
      \draw[myarrow=.6](9,2)--(9,-2);
      \fill[black](-0.2,0) node[left]{$u_0=v_0$} circle(2pt);
      \fill[black](1,2) node[above]{$u_1$} circle(2pt);
      \fill[black](3,2) node[above]{$u_2$} circle(2pt);
      \fill[black](7,2) node[above]{$u_{r-2}$} circle(2pt);
      \fill[black](9,2) node[above]{$u_{r-1}$} circle(2pt);
      \fill[black](10.2,0) node[right]{$u_r=v_r$} circle(2pt);
      \fill[black](1,-2) node[below]{$v_1$} circle(2pt);
      \fill[black](3,-2) node[below]{$v_2$} circle(2pt);
      \fill[black](7,-2) node[below]{$v_{r-2}$} circle(2pt);
      \fill[black](9,-2) node[below]{$v_{r-1}$} circle(2pt);
    \end{tikzpicture}
  \end{figure}

  \noindent Let $X$ and $Y$ be the induced subgraphs over $\{u_0,\ldots,u_{r-1},v_1,\ldots,v_{r-1}\}$ and $\{u_{r-1},u_r,v_{y-1}\}$, respectively. Then it is obvious that $(X,Y)$ is an $r$-separable pair, but not satisfies the condition of Lemma \ref{trivially separable} as the perimeter $\Gamma_r$ in $X\cup Y$ is in neither $X$ nor $Y$.
\end{example}

We apply $r$-separability to decompose $C_r$-homotopies as follows.

\begin{lemma}
  \label{subdivision composite}
  Let $X$ be a directed graph. Suppose that reduced loops $f\colon I\to X$ and $g\colon J\to X$ for $I\in\I_m$ and $J\in\I_n$ satisfy $f(k)=g(0)$ and $f(I)\cap g(J)$ is represented by a subpath $f\vert_{I_{\{k,k+1,\ldots,k+l\}}}$ for some $0\le k\le n$ and $0\le l\le m-k$. Then
  \begin{equation}
    \label{composite}
    (f\vert_{I_{[k]}})\cdot\overline{(g\vert_{J_{\{l,l+1,\ldots,n\}}})}\cdot(f\vert_{I_{\{k+l,k+l+1,\ldots,m\}}})\approx_1(f\vert_{I_{[k]}})\cdot\bar{g}\cdot(f\vert_{I_{\{k,k+1,\ldots,m\}}}).
  \end{equation}
\end{lemma}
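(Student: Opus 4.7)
The plan is to rewrite the right-hand side of \eqref{composite} using the rules in Lemmas \ref{concatenation}, \ref{C_r-homotopy}, and \ref{inverse}, so as to introduce a canceling pair in the middle of the concatenation. First, I would decompose the factors via Lemma \ref{concatenation} as
\[
  \bar g \;=\; \overline{g|_{J_{\{l,l+1,\ldots,n\}}}}\cdot \overline{g|_{J_{[l]}}}, \qquad f|_{I_{\{k,k+1,\ldots,m\}}} \;=\; f|_{I_{\{k,k+1,\ldots,k+l\}}}\cdot f|_{I_{\{k+l,k+l+1,\ldots,m\}}},
\]
so that, by associativity, the right-hand side of \eqref{composite} becomes
\[
  f|_{I_{[k]}}\cdot \overline{g|_{J_{\{l,\ldots,n\}}}}\cdot \overline{g|_{J_{[l]}}}\cdot f|_{I_{\{k,\ldots,k+l\}}}\cdot f|_{I_{\{k+l,\ldots,m\}}}.
\]

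The key observation is that $g|_{J_{[l]}}$ and $f|_{I_{\{k,\ldots,k+l\}}}$ are both reduced paths from $g(0)=f(k)$ to $g(l)=f(k+l)$ sharing the image $f(I)\cap g(J)$, a subgraph whose underlying undirected graph is a path. By the remark preceding Lemma \ref{C-homotopy}, two reduced paths with common endpoints and identical image are $C_1$-homotopic, so $g|_{J_{[l]}}\approx_1 f|_{I_{\{k,\ldots,k+l\}}}$. Lemma \ref{C_r-homotopy}(2) then lets me replace $f|_{I_{\{k,\ldots,k+l\}}}$ with $g|_{J_{[l]}}$ in the concatenation above, after which Lemma \ref{inverse} contracts $\overline{g|_{J_{[l]}}}\cdot g|_{J_{[l]}}$ to $c_{f(k+l)}$, and Lemma \ref{concatenation}(1) absorbs this constant path to yield the left-hand side of \eqref{composite}.

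The main obstacle I foresee is the identification step: one must carefully unpack the hypothesis that $f(I)\cap g(J)$ is represented by $f|_{I_{\{k,\ldots,k+l\}}}$ to argue that the $g$-parametrization $g|_{J_{[l]}}$ really traces out this intersection subgraph, rather than detouring through edges of $g(J)\setminus f(I)$. The point is that $g|_{J_{[l]}}$, being reduced, is a homomorphism from a path graph $J_{[l]}$ into $X$; its image is a connected subgraph of $g(J)$ containing the common endpoints $g(0)$ and $g(l)$, and traversing the unique reduced route between them through the path-subgraph $f(I)\cap g(J)$ forces the two images to coincide. Once this is in hand, the remaining steps are routine applications of the earlier lemmas.
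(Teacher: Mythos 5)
Your proposal is correct and follows essentially the same route as the paper's proof: decompose $\bar g$ and $f\vert_{I_{\{k,\ldots,m\}}}$ at the intersection, observe that $g\vert_{J_{[l]}}$ and $f\vert_{I_{\{k,\ldots,k+l\}}}$ both represent $f(I)\cap g(J)$ and are hence $C_1$-homotopic, and cancel the resulting $\overline{(g\vert_{J_{[l]}})}\cdot(g\vert_{J_{[l]}})$ via Lemma \ref{inverse}. The paper simply asserts ``by assumption, $f(I)\cap g(J)$ is also represented by $g\vert_{J_{[l]}}$'' where you spell out the justification, but the argument is the same.
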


\begin{proof}
  By assumption, $f(I)\cap g(J)$ is also represented by $g\vert_{J_{[l]}}$, and so
  \begin{align*}
    &(f\vert_{I_{[k]}})\cdot\bar{g}\cdot(f\vert_{I_{\{k,k+1,\ldots,m\}}})\\
    &=(f\vert_{I_{[k]}})\cdot\overline{(g\vert_{J_{\{l,l+1,\ldots,n\}}})}\cdot\overline{(g\vert_{J_{[l]}})}\cdot(f\vert_{I_{\{k,k+1,\ldots,k+l\}}})\cdot(f\vert_{I_{\{k+l,k+l+1,\ldots,m\}}})\\
    &\approx_1(f\vert_{I_{[k]}})\cdot\overline{(g\vert_{J_{\{l,l+1,\ldots,n\}}})}\cdot(f\vert_{I_{\{k+l,k+l+1,\ldots,m\}}}).
  \end{align*}
  Then the statement is proved.
\end{proof}

Observe that the LHS of \eqref{composite} represents $f(I)\cup g(J)-f(I)\cap g(J)$, and that the RHS of \eqref{composite} represents $f(I)\cup g(J)$. We call the RHS of \eqref{composite} the \emph{composite} of $f_1$ and $f_2$.

\begin{proposition}
  \label{subdivision}
  Suppose that $(X,Y)$ is an $r$-separable pair of directed graphs.
  \begin{enumerate}
    \item If $1\le r<\infty$, then every $C_r$-homotopy in $X\cup Y$ can be replaced by the composite of some $C_r$-homotopies in either $X$ or $Y$.
    \item If $r=\infty$, then every $C_s$-homotopy with any $1\le s<\infty$ can be replaced by the composite of some $C_t$-homotopies in either $X$ or $Y$ for some $t\ge s$.
  \end{enumerate}
\end{proposition}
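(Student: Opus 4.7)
The plan is to reduce the proof to refining a single-step $C_r$-homotopy and then applying $r$-separability through Lemma~\ref{subdivision composite}. Since $\approx_r$ is generated by the one-step relation $\to_r$, it suffices to treat one step: consider $f_1\cdot f_2\to_r f_1\cdot(h\circ\rho_x)\cdot f_2$ with $h\colon\Gamma_s\to X\cup Y$. If $s=0$, the inserted loop is constant at a vertex of $V(X)\cup V(Y)$, so the $C_0$-homotopy lives in whichever of $X,Y$ contains it and nothing more is required. Assume $s\ge 1$ and set $L=h(\Gamma_s)$, a degenerate $\Gamma_s$ in $X\cup Y$. In case~(1), $r$-separability gives a subdivision $L=L_1\cup\cdots\cup L_n$ into degenerate $\Gamma_r$'s each lying in $X$ or $Y$; in case~(2), $\infty$-separability gives such a subdivision into degenerate $\Gamma_{t_i}$'s with $t_i\ge s$.

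I would then induct on $n$. The case $n=1$ is immediate since $h\circ\rho_x$ itself already realises a $C_{t_1}$-homotopy supported in $X$ or $Y$. For $n\ge 2$, the iterated subdivision procedure produces, at its last application, two subgraphs $L',L''$ with $L=L'\cup L''-L'\cap L''$, where $L'\cap L''$ is represented by a single reduced subpath and $L',L''$ are themselves further subdivided into fewer than $n$ of the $L_i$. Choose reduced loops $f,g$ representing $L',L''$ that share their initial vertex and meet exactly along $L'\cap L''$ as in Lemma~\ref{subdivision composite}. The lemma yields a $C_1$-homotopy between $h\circ\rho_x$ and the composite loop representing $L'\cup L''$. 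Crucially, reading its proof, this $C_1$-homotopy is implemented by applying Lemma~\ref{inverse} to $g|_{J_{[l]}}\cdot\overline{g|_{J_{[l]}}}$, so it takes place entirely along $L'\cap L''\subseteq L''$ and is therefore supported in whichever of $X,Y$ contains $L''$. Applying the inductive hypothesis to the subdivisions of $L'$ and $L''$ now expresses $h\circ\rho_x$, up to $C_1$-homotopies supported in $X$ or $Y$, as a concatenation of loops $\gamma_1,\ldots,\gamma_n$, each factoring through a map $\Gamma_{t_i}\to X$ or $\Gamma_{t_i}\to Y$.

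Inserting the loops $\gamma_i$ one at a time into $f_1\cdot f_2$ realises each insertion as a single $C_{t_i}$-homotopy supported in $X$ or $Y$, and combining with the intermediate $C_1$-homotopies (themselves supported in $X$ or $Y$, by Lemma~\ref{m<n} also legitimate as $C_r$- or $C_{t_i}$-homotopies) exhibits the original step $f\to_r g$ as a composite of the required form. Concatenating across successive steps of an arbitrary $\approx_r$-zigzag proves the statement for the equivalence relation in both cases~(1) and~(2).

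The main obstacle I anticipate is ensuring that at each inductive step the subgraphs $L',L''$ meet along a single reduced subpath, as Lemma~\ref{subdivision composite} requires. A subdivision produced abstractly by separability may yield pieces whose pairwise intersections are disconnected or span several subpaths, forcing further refinement: each connected component of $L'\cap L''$ must be treated as its own subdivision step, and one must verify that this refinement still consists of degenerate $\Gamma_r$'s (or $\Gamma_t$'s) in $X$ or $Y$ by reapplying separability to the refined pieces. Orchestrating this combinatorial bookkeeping of subdivision order, basepoints, and choices of representing loops is the technical heart of the argument; once in place, the remainder is a routine application of Lemmas~\ref{subdivision composite} and~\ref{inverse}.
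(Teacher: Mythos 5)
Your argument is correct and is essentially the paper's own proof, spelled out in more detail: the paper likewise reduces everything to Lemma~\ref{subdivision composite}, together with the observations that every $C_1$-homotopy in $X\cup Y$ automatically lies in $X$ or in $Y$ and that a subgraph represented by a reduced loop can be re-based at any of its vertices. The obstacle you anticipate at the end does not actually arise, since the definition of a subdivision already requires $L_1\cap L_2$ to be represented by a single subpath of each piece.
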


\begin{proof}
  By definition, every $C_1$-homotopy in $X\cup Y$ is either in $X$ or $Y$. Observe that if a subgraph $L$ of a directed graph is represented by some reduced loop, then for any vertex $x$ of $L$, there is a reduced loop based at $x$ which represents $L$. Then the statement follows from Lemma \ref{subdivision composite}.
\end{proof}

%%%%% Subsection 4.2 %%%%%

\subsection{Proof of Theorem \ref{main 2}}

Let $X$ and $Y$ be directed graphs, and let $1\le r\le\infty$ throughout this subsection. We define the groupoid $\mathcal{G}^{r}$ by the pushout
\[
  \xymatrix{
    \Pi_1^{r}(X\cap Y)\ar[r]\ar[d]&\Pi_1^{r}(X)\ar[d]\\
    \Pi_1^{r}(Y)\ar[r]&\mathcal{G}^{r}.
    }
\]
Then objects of $\mathcal{G}^{r}$ are vertices of $X\cup Y$, and the hom-set $\mathcal{G}^{r}(x,y)$ consists of the formal composite
\[
  x\xrightarrow{f_1}x_1\xrightarrow{f_2}\cdots\xrightarrow{f_n}x_n=y
\]
where each $f_i$ is a morphism of either $\Pi_1^{r}(X)$ or $\Pi_1^{r}(Y)$. Let
\[
  F^{r}\colon\mathcal{G}^{r}\to\Pi^{r}_1(X\cup Y).
\]
denote the natural functor. We aim to prove that the functor $F^{r}$ is an isomorphism whenever $(X,Y)$ is $r$-separable.

\begin{proposition}
  \label{F^1}
  The functor $F^{1}$ is an isomorphism.
\end{proposition}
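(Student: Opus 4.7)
The plan is to construct an explicit inverse $G^1 \colon \Pi_1^1(X \cup Y) \to \mathcal{G}^1$ to $F^1$. Since any pair $(X,Y)$ of directed graphs is automatically $1$-separable, no hypothesis beyond the setup is needed; moreover, by Proposition \ref{1-fundamental groupoid}, $\Pi_1^1$ agrees with the edge-path groupoid of the underlying undirected graph, so the statement is essentially the classical edge-path Seifert-van Kampen theorem. Since $V(X \cup Y) = V(X) \cup V(Y)$ matches the object set of the pushout $\mathcal{G}^1$, the functor $F^1$ is the identity on objects, so only bijectivity on hom-sets needs proof.

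To define $G^1$, take a path $f \colon I \to X \cup Y$ with $I \in \I_n$ and split it into its $n$ constituent (possibly degenerate) edges $e_1, \ldots, e_n$. Since $E(X \cup Y) = E(X) \cup E(Y)$, each non-degenerate $e_i$ lies in $E(X)$ or in $E(Y)$; choose one arbitrarily when it lies in both, and treat degenerate edges as identities. Set
\[
  G^1([f]) := [e_1] \cdot [e_2] \cdots [e_n] \in \mathcal{G}^1.
\]
Independence from the choice on edges of $X \cap Y$ is immediate from the defining pushout: such an edge represents a morphism in $\Pi_1^1(X \cap Y)$, so its images in $\Pi_1^1(X)$ and $\Pi_1^1(Y)$ are identified in $\mathcal{G}^1$.

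The substantive step is $C_1$-homotopy invariance of $G^1$. By Definition \ref{n-homotopy def}, an elementary $C_1$-homotopy is implemented by a map $h \colon \Gamma_s \to X \cup Y$ with $s \in \{0,1\}$, whose image is either a single vertex or a single directed edge; hence $h$ factors through $X$ or through $Y$. The insertion of $h \circ \rho_x$ therefore adds to the formal composite either an extra identity factor (when $s = 0$) or an extra factor $[e] \cdot [\bar e]$ inside $\Pi_1^1(X)$ or $\Pi_1^1(Y)$ (when $s = 1$), which already collapses to the identity in that groupoid and hence in $\mathcal{G}^1$. Thus $G^1$ descends to $\Pi_1^1(X \cup Y)$. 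Finally, $F^1 \circ G^1 = \mathrm{id}$ is immediate, and $G^1 \circ F^1 = \mathrm{id}$ follows by choosing, for a representative concatenation $f_1 \cdot \cdots \cdot f_m$ of a formal composite in $\mathcal{G}^1$, the decomposition that assigns each edge of $f_i$ to whichever $\Pi_1^1$-factor contains $f_i$, so that the partial edge-composite inside that factor recovers $[f_i]$. The only real obstacle is the $C_1$-homotopy invariance just sketched; its triviality reflects the extremely local nature of $C_1$-homotopies at $r = 1$, and foreshadows why the genuine content of Theorem \ref{main 2} will lie in the $r \ge 2$ case, where $\Gamma_r$-shaped homotopies must be decomposed via the $r$-separability hypothesis.
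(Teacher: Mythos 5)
Your proposal is correct and follows essentially the same route as the paper: define $G^1$ by splitting a path into its constituent edges (each necessarily lying in $X$ or $Y$), check $C_1$-homotopy invariance by observing that any map $\Gamma_0$ or $\Gamma_1\to X\cup Y$ has image a vertex or a single edge and hence factors through $X$ or $Y$, and verify that $F^1$ and $G^1$ are mutually inverse. Your explicit remark that independence from the choice on edges of $X\cap Y$ follows from the pushout identification is a small point the paper leaves implicit, and your closing observation about the classical edge-path Seifert--van Kampen theorem matches the remark the paper makes immediately after its proof.
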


\begin{proof}
  Let $f\colon I\to X\cup Y$ be a path from $x$ to $y$, where $I\in\I_n$, and let $f_i$ denote the restriction of $f$ to the induced subgraph $I_{\{i,i+1\}}$ for $i=0,1,\ldots,n-1$. Then $f$ is identified with a vertex or an edge of $X\cup Y$, so that it lies in either $X$ or $Y$. Now we define a functor $G^{1}\colon\Pi^{1}_1(X\cup Y)\to\mathcal{G}^{1}$ by the identity map on objects and
  \[
    G^{1}([f])=[f_0]\cdot[f_1]\cdots[f_{n-1}].
  \]
  Let us show that $G^1$ is well-defined. Let $g\colon J\to X\cup Y$ be a path from $x$ to $y$ such that $f\to_1g$. Then there is a map $h\colon\Gamma_1\to X\cup Y$ such that
  \[
    g=f_0\cdots f_{k-1}\cdot(h\circ\rho_z)\cdot f_k\cdots f_{n-1}
  \]
  for some $0\le k\le n-1$ and a vertex $z$ of $\Gamma_1$. Observe that $h\circ\rho_z=e\cdot\bar{e}$ for some path $e\colon I_1\to X\cup Y$ with $I_1\in\I_1$. Note that $e(I_1)$ is  contained in either $X$ or $Y$. Then
  \begin{align*}
    G^{1}([g])&=[f_0]\cdots[f_{k-1}]\cdot[e]\cdot[e]^{-1}\cdot[f_k]\cdots[f_{n-1}]\\
    &=[f_0]\cdots[f_{k-1}]\cdot[f_k]\cdots[f_{n-1}]\\
    &=G^{1}([f]).
  \end{align*}
  Thus $G^{1}$ is well-defined.

  For $i=1,2,\ldots,k$, let $f_i\colon I_i\to X\cup Y$ be paths for $I_i\in\I_{n_i}$ such that each $f_i$ is in either $X$ or $Y$. Then
  \[
    F^{1}([f_1]\cdots[f_k])=[f_1]\cdots[f_k].
  \]
  Let $f_i^j$ denote the restriction of $f_i$ to the induced subgraph $(I_i)_{\{j,j+1\}}$ for $i=1,2,\ldots,k$ and $j=0,1,\ldots,n_i-1$. Then $f_i^j$ is identified with a vertex or an edge of $X\cup Y$. By definition, we have
  \[
    G^{1}([f_1]\cdots[f_k])=[f_1^0]\cdots[f_1^{n_1-1}]\cdots[f_k^0]\cdots[f_k^{n_k-1}]=[f_1]\cdots[f_k].
  \]
  Thus $F^{1}$ and $G^{1}$ are mutually inverse.
\end{proof}

Remark that Proposition \ref{F^1} also follows from Proposition \ref{1-fundamental groupoid} and the Seifert-van Kampen theorem for spaces. Let
\[
  P^{r}\colon\Pi_1^{1}(X)\to\Pi_1^{r}(X)\quad\text{and}\quad Q^{r}\colon\mathcal{G}^{1}\to\mathcal{G}^{r}
\]
denote the composite of the natural functors in the sequence \eqref{sequence Pi} and the functor induced from $P^{r}$, respectively. Then there is a commutative diagram
\begin{equation}
  \label{Q diagram1}
  \xymatrix{
    \mathcal{G}^{1}\ar[d]_{Q^{r}}\ar[r]^(.36){F^{1}}&\Pi_1^{1}(X\cup Y)\ar[d]^{P^{r}}\\
    \mathcal{G}^{r}\ar[r]^(.36){F^{r}}&\Pi_1^{r}(X\cup Y).
  }
\end{equation}

\begin{lemma}
  \label{contraction}
  If a map $f\colon\Gamma_r\to X\cup Y$ factors through $X$ or $Y$, then
  \[
    Q^{r}\circ G^{1}(f\circ \rho_x)=1
  \]
  for any vertex $x$ of $\Gamma_r$.
\end{lemma}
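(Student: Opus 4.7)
The plan is to reduce the claim to the observation that $\tilde{f}\circ\rho_x$ represents the identity in $\Pi_1^r(X)$ whenever $\tilde{f}\colon\Gamma_r\to X$, and then transport this triviality along the canonical functor $\Pi_1^r(X)\to\mathcal{G}^r$ supplied by the pushout defining $\mathcal{G}^r$. Without loss of generality assume $f$ factors as $\iota\circ\tilde{f}$ with $\tilde{f}\colon\Gamma_r\to X$ and $\iota\colon X\hookrightarrow X\cup Y$; the case of $Y$ is entirely symmetric.

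First I would unpack $G^{1}([f\circ\rho_x])$ using its definition from the proof of Proposition \ref{F^1}. Writing $\rho_x$ as the concatenation $\varepsilon_0\cdot\varepsilon_1\cdots\varepsilon_{2r-1}$ of its consecutive edges, we get
\[
  G^{1}([f\circ\rho_x])=[f\circ\varepsilon_0]\cdot[f\circ\varepsilon_1]\cdots[f\circ\varepsilon_{2r-1}]
\]
in $\mathcal{G}^{1}$. Each $f\circ\varepsilon_i=\iota\circ(\tilde{f}\circ\varepsilon_i)$ lies in $X$, so each factor is the image, under the canonical functor $\Pi_1^{1}(X)\to\mathcal{G}^{1}$ arising from the pushout, of the class of $\tilde{f}\circ\varepsilon_i\in\Pi_1^{1}(X)$. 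Because this canonical functor preserves composition, the product above equals the image of $[\tilde{f}\circ\rho_x]\in\Pi_1^{1}(X)$.

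Next, from the construction of $Q^{r}$ via the universal property of the pushout, the canonical functors fit into a commutative square
\[
  \xymatrix{
    \Pi_1^{1}(X)\ar[r]\ar[d]_{P^{r}}&\mathcal{G}^{1}\ar[d]^{Q^{r}}\\
    \Pi_1^{r}(X)\ar[r]&\mathcal{G}^{r}.
  }
\]
Chasing this diagram, $Q^{r}(G^{1}([f\circ\rho_x]))$ equals the image in $\mathcal{G}^{r}$ of $[\tilde{f}\circ\rho_x]\in\Pi_1^{r}(X)$. To finish I would invoke the combinatorial description $\Pi_1^{r}(X)\cong\widetilde{\Pi}_1^{r}(X)$ from Theorem \ref{equivalence def} and apply Definition \ref{n-homotopy def} with $f_1=f_2=c_{\tilde{f}(x)}$ and $h=\tilde{f}$; this gives $c_{\tilde{f}(x)}\to_r\tilde{f}\circ\rho_x$, hence $[\tilde{f}\circ\rho_x]=1$ in $\Pi_1^{r}(X)$. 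Functoriality then yields $Q^{r}\circ G^{1}(f\circ\rho_x)=1$. No substantial obstacle arises; the only bookkeeping required is to verify the compatibility of the ad hoc edge-by-edge formula for $G^{1}$ with the canonical inclusion of $\Pi_1^{1}(X)$ into $\mathcal{G}^{1}$, which is immediate from functoriality of the latter.
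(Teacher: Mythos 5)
Your proposal is correct and follows essentially the same route as the paper: the paper likewise computes $Q^{r}\circ G^{1}(f\circ\rho_x)$ edge-by-edge (referring back to the proof of Proposition \ref{F^1}) to identify it with the class $[f\circ\rho_x]$ coming from $\Pi_1^{r}(X)$ or $\Pi_1^{r}(Y)$, and then kills that class via the $C_r$-homotopy $f\circ\rho_x\approx_r c_x$ inside the factor. Your version just makes the diagram chase through the pushout explicit, which the paper leaves implicit.
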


\begin{proof}
  Quite similarly to the proof of Lemma \ref{F^1}, we can show that
  \[
    Q^{r}\circ G^{1}(f\circ \rho_x)=[f\circ\rho_x].
  \]
  Then since $[f\circ\rho_x]=[c_x]=1$ in $\mathcal{G}^{r}(f(x),f(x))$, the statement follows.
\end{proof}

\begin{lemma}
  \label{n-separable}
  Let $(X,Y)$ be an $r$-separable pair of directed graphs. Then there is a functor $G^{r}\colon\Pi_1^{r}(X\cup Y)\to\mathcal{G}^{r}$ satisfying the commutative diagram
  \begin{equation}
    \label{Q diagram2}
    \xymatrix{
      \Pi_1^{1}(X\cup Y)\ar[r]^(.65){G^{1}}\ar[d]_{P^{r}}&\mathcal{G}^{1}\ar[d]^{Q^{r}}\\
      \Pi_1^{r}(X\cup Y)\ar[r]^(.65){G^{r}}&\mathcal{G}^{r}.
    }
  \end{equation}
\end{lemma}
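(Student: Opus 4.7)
The plan is to define $G^r$ on morphisms by
\[
  G^r([f]_r):=Q^r\circ G^1([f]_1),
\]
where $[f]_r$ and $[f]_1$ denote the $C_r$- and $C_1$-homotopy classes of a path $f$ in $X\cup Y$, respectively. Since every morphism in $\Pi_1^r(X\cup Y)$ is represented by a path, this assignment determines $G^r$ on every hom-set. With this definition, commutativity of \eqref{Q diagram2} is immediate: for $[f]_1\in\Pi_1^1(X\cup Y)$ one has $P^r([f]_1)=[f]_r$, whence $G^r\circ P^r([f]_1)=Q^r\circ G^1([f]_1)$. Functoriality of $G^r$ follows from that of $G^1$ and $Q^r$ together with the fact that the defining formula respects concatenation of paths.

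The substantive step is well-definedness, and this is where $r$-separability enters. Since $\approx_r$ is the transitive closure of the elementary relation $\to_r$, it suffices to show that if paths $f,g$ in $X\cup Y$ satisfy $f\to_rg$, then $Q^r\circ G^1([f]_1)=Q^r\circ G^1([g]_1)$ in $\mathcal{G}^r$. By Definition \ref{n-homotopy def}, the only nontrivial case is $f=f_1\cdot f_2$ and $g=f_1\cdot(h\circ\rho_x)\cdot f_2$ for some map $h\colon\Gamma_r\to X\cup Y$ and a vertex $x$ of $\Gamma_r$. Since $G^1$ respects concatenation, this reduces to showing that $Q^r\circ G^1([h\circ\rho_x]_1)=1$ in $\mathcal{G}^r(h(x),h(x))$.

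By $r$-separability and Proposition \ref{subdivision}(1), the loop $h\circ\rho_x$ is $\approx_1$ to a composite of finitely many loops of the form $h_i\circ\rho_{x_i}$, each with $h_i\colon\Gamma_r\to X\cup Y$ factoring through either $X$ or $Y$. Since $G^1$ descends to $\Pi_1^1$ and is compatible with concatenation, applying $G^1$ produces the corresponding product of the $G^1([h_i\circ\rho_{x_i}]_1)$ in $\mathcal{G}^1$; applying $Q^r$ and invoking Lemma \ref{contraction} on each factor then yields the identity, as desired.

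The $r=\infty$ case is handled by exactly the same argument, using $\infty$-separability and Proposition \ref{subdivision}(2) to replace each elementary $C_s$-homotopy in $X\cup Y$ by a composite of $C_t$-homotopies in $X$ or $Y$ for some $t\ge s$, after which Lemma \ref{contraction} kills each factor in $\mathcal{G}^\infty$. The main obstacle will be the well-definedness step; this is resolved cleanly by the separability hypothesis through Proposition \ref{subdivision} and Lemma \ref{contraction}, and the only detail requiring care is ensuring that the decomposition of $h\circ\rho_x$ is tracked precisely as a product of loops of the form $h_i\circ\rho_{x_i}$ (together with auxiliary paths that contribute $e\cdot e^{-1}=1$ under $G^1$), so that Lemma \ref{contraction} applies to each factor.
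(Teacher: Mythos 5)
Your proposal is correct and follows essentially the same route as the paper: define $G^r$ via $H=Q^r\circ G^1$, reduce well-definedness to the single elementary relation $f\to_r g$, and kill $H([h\circ\rho_x])$ by decomposing the degenerate $\Gamma_r$ via $r$-separability (Proposition \ref{subdivision}) and applying Lemma \ref{contraction} to each factor. The extra care you flag about tracking the decomposition as a product of loops plus cancelling auxiliary paths is exactly the point the paper handles implicitly via Lemma \ref{subdivision composite}.
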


\begin{proof}
  We put $H=Q^{r}\circ G^{1}$. Let $f,g\colon I\to X\cup Y$ be paths from $x$ to $y$. It is sufficient to prove $H([f])=H([g])$ whenever $f\to_rg$. Suppose that $f\to_rg$, that is, there are paths $f_1,f_2$ in $X\cup Y$ and a map $h\colon\Gamma_r\to X\cup Y$ such that $f=f_1\cdot f_2$ and $g=f_1\cdot(h\circ\rho_x)\cdot f_2$ for some vertex $x$ of $\Gamma_r$. Then
  \[
    H(f)=H(f_1)\cdot H(f_2),\quad H(g)=H(f_1)\cdot H(h\circ\rho_x)\cdot H(f_2).
  \]
  Observe that each $C_0$-homotopy and $C_1$-homotopy in $X\cup Y$ are in either $X$ or $Y$. Then since $(X,Y)$ is $r$-separable, we can apply Proposition \ref{subdivision} to the map $H$, so that by Lemma \ref{contraction}, we obtain $H([h\circ\rho_x])=1$.
\end{proof}

Now we are ready to prove Theorem \ref{main 2}.

\begin{proof}
  [Proof of Theorem \ref{main 2}]
  By juxtaposing the commutative diagrams \eqref{Q diagram1} and \eqref{Q diagram2}, we can see that $F^{r}$ and $G^{r}$ are mutually inverse.
\end{proof}

We consider corollaries of Theorem \ref{main 2}.

\begin{corollary}
  \label{van Kampen pi_1}
  Let $(X,x_0)$ and $(Y,x_0)$ be pointed directed graphs with common basepoint. If $(X,Y)$ is $r$-separable and $X\cap Y$ is connected, then the commutative diagram
  \[
    \xymatrix{
      \pi_1^{r}(X\cap Y,x_0)\ar[r]\ar[d]&\pi_1^{r}(X,x_0)\ar[d]\\
      \pi_1^{r}(Y,x_0)\ar[r]&\pi_1^{r}(X\cup Y,x_0)
    }
  \]
  is a pushout.
\end{corollary}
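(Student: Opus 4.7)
The plan is to derive this group-level Seifert--van Kampen statement from the groupoid-level pushout of Theorem~\ref{main 2}, using the standard trick of trivialising a connected groupoid against its vertex group at the basepoint.

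First, I would apply Theorem~\ref{main 2} to obtain a pushout square of groupoids. Since $X\cap Y$ is connected and contains $x_0$, the groupoid $\Pi_1^r(X\cap Y)$ is connected; in particular its image inside $\Pi_1^r(X)$ and inside $\Pi_1^r(Y)$ lies entirely in the connected components containing $x_0$.

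Next, I would verify the universal property of the group pushout directly. Let $H$ be a group and let $\phi_X\colon\pi_1^r(X,x_0)\to H$ and $\phi_Y\colon\pi_1^r(Y,x_0)\to H$ be homomorphisms agreeing on $\pi_1^r(X\cap Y,x_0)$. Write $BH$ for the one-object groupoid associated to $H$. The idea is to promote $\phi_X$ and $\phi_Y$ to functors $\Phi_X\colon\Pi_1^r(X)\to BH$ and $\Phi_Y\colon\Pi_1^r(Y)\to BH$ that agree on $\Pi_1^r(X\cap Y)$. To construct $\Phi_X$, use the connectedness of $X\cap Y$ to pick morphisms $\gamma_v\colon x_0\to v$ in $\Pi_1^r(X\cap Y)$ for each $v\in V(X\cap Y)$ (with $\gamma_{x_0}=1_{x_0}$), and extend to a choice $\gamma_v\colon x_0\to v$ in $\Pi_1^r(X)$ for every $v$ in the $x_0$-component of $X$. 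Define $\Phi_X$ to send every vertex to the unique object of $BH$ and every morphism $f\colon u\to v$ of the $x_0$-component to $\phi_X(\gamma_u\cdot f\cdot\gamma_v^{-1})$; on the remaining components of $\Pi_1^r(X)$, send every morphism to $1\in H$. Define $\Phi_Y$ analogously, reusing the \emph{same} $\gamma_v$ for $v\in V(X\cap Y)$. Then $\Phi_X$ and $\Phi_Y$ coincide on $\Pi_1^r(X\cap Y)$, because $\phi_X$ and $\phi_Y$ do on $\pi_1^r(X\cap Y,x_0)$ and because the trivialising morphisms $\gamma_v$ agree on the common vertices. Theorem~\ref{main 2} then supplies a unique functor $\Pi_1^r(X\cup Y)\to BH$ extending $\Phi_X$ and $\Phi_Y$, and its restriction to the endomorphism monoid at $x_0$ is the sought homomorphism $\pi_1^r(X\cup Y,x_0)\to H$.

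The main obstacle is arranging compatibility of $\Phi_X$ and $\Phi_Y$ on $\Pi_1^r(X\cap Y)$ and not merely at the single vertex $x_0$; the connectedness hypothesis on $X\cap Y$ is used precisely at this point, as it allows a common choice of trivialising morphisms $\gamma_v$ inside $\Pi_1^r(X\cap Y)$ so that the agreement $\phi_X=\phi_Y$ at the basepoint propagates to agreement of $\Phi_X$ and $\Phi_Y$ throughout $\Pi_1^r(X\cap Y)$. Uniqueness of the resulting group homomorphism is automatic since, by Theorem~\ref{main 2}, every element of $\pi_1^r(X\cup Y,x_0)$ is a product of images of elements of $\pi_1^r(X,x_0)$ and $\pi_1^r(Y,x_0)$.
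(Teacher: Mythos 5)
Your argument is correct and is essentially the paper's own proof in expanded form: the paper reduces to connected $X$, $Y$ and invokes the fact that for a connected directed graph the inclusion $\pi_1^r(Z,z_0)\to\Pi_1^r(Z)$ admits a left inverse, which is exactly the trivialisation $f\mapsto\gamma_u\cdot f\cdot\gamma_v^{-1}$ you construct, and then concludes from Theorem \ref{main 2}. Your version just makes explicit the verification of the universal property (and the need to choose the $\gamma_v$ inside $\Pi_1^r(X\cap Y)$ on the common vertices), so no substantive difference.
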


\begin{proof}
  We may assume that $X$ and $Y$ are connected. If a directed graph $Z$ is connected, then the inclusion functor $\pi_1^{r}(Z,z_0)\to\Pi_1^{r}(Z)$ has a left inverse. Then the statement follows from Theorem \ref{main 2}.
\end{proof}

\begin{proof}
  [Proof of Corollary \ref{Mayer-vietoris sequence}]
  By Theorem \ref{main 2} and Corollary \ref{van Kampen pi_1}, the commutative diagram
  \[
    \xymatrix{
      E^r_{1,0}(X\cap Y)\ar[r]^{(i_X)_*}\ar[d]_{(i_Y)_*}&E^r_{1,0}(X)\ar[d]^{(j_X)_*}\\
      E^r_{1,0}(Y)\ar[r]^{(j_Y)_*}&E^r_{1,0}(X\cup Y)
    }
  \]
  is a pushout, and then the statement follows.
\end{proof}

%%%%% Subsection 4.2 %%%%%

\subsection{$r$-Cofibration}

For the rest of this section, we consider the pushout of directed graphs
\begin{equation}
  \label{pushout phi1}
  \xymatrix{
    A\ar[r]^{\varphi_X}\ar[d]_{\varphi_Y}&X\ar[d]\\
    Y\ar[r]&X\cup_AY.
  }
\end{equation}
In \cite{CD}, Carranza \textit{et al.} proved that the category of directed graphs carries a cofibration category structure where weak equivalences are maps inducing isomorphisms in path homology. As mentioned in Section \ref{Introduction}, Hepworth and Roff \cite{HR1} proved excision holds for the MPSS of $X\cup_AY$ whenever $\varphi_X$ is a cofibration, and they also proved the Mayer-Vietoris sequence of the MPSS of $X\cup_AY$ for $r=1,2$, and asked whether there is the Mayer-Vietoris sequence for $r\ge 3$. Hepworth and Roff \cite{HR2} also  introduced a long cofibration by relaxing the definition of a cofibration, and prove that reachability homology enjoys excision and the Mayer-Vietoris sequence for the pushout \eqref{pushout phi1} whenever $\varphi_X$ is a long cofibration. Here, we introduce an $r$-cofibration by relaxing the definition of a cofibration, and as an application of Theorem \ref{main 2}, prove the Seifert-van Kampen theorem for the pushout \eqref{pushout phi1} whenever $\varphi_X$ is an $r$-cofibration. As a consequence, we get the Mayer-Vietoris sequence of the $E^r_{1,0}$ of the MPSS for $X\cup_AY$ whenever $\varphi_X$ is an $r$-cofibration, which is a partial answer to the question of Hepworth and Roff's question \cite{HR1} mentioned in Section \ref{Introduction}.

We define an $r$-cofibration. Let $A$ be a subgraph of a directed graph $X$. The \emph{reach} of $A$, denoted $\mathrm{r}(A)$, is defined to be the set of all vertices of $X$ having a directed path from some vertex of $A$. The reach of $A$ is defined as an induced subgraph of $X$ by Hepworth and Roff \cite{HR1}, but we choose this definition because we do not need a directed graph structure.

\begin{definition}
  \label{def cofibration}
  For $1\le r<\infty$, an \emph{$r$-cofibration} of directed graphs is an induced subgraph inclusion $A\to X$ such that
  \begin{enumerate}
    \item $d(x,a)=\infty$ for any $x\in V(X)-V(A)$ and $a\in V(A)$, and

    \item there is a map $\pi\colon\mathrm{r}(A)\to V(X)$ satisfying that for any $a\in V(A)$ and $x\in\mathrm{r}(A)$, $d(a,\pi(x))\le d(a,x)$ and
    \[
      d(a,\pi(x))+d(\pi(x),x)
      \begin{cases}
      \le r&d(a,x)\le r\\
      =d(a,x)&d(a,x)>r.
      \end{cases}
    \]
  \end{enumerate}
\end{definition}

Remark that a cofibration of Carranza \textit{et al.} \cite{CD} is exactly a $1$-cofibration. We also define an $\infty$-cofibration which is also called a long cofibration by Hepworth and Roff \cite{HR1}.

\begin{definition}
  An \emph{$\infty$-cofibration} of directed graphs is an induced subgraph inclusion $A\to X$ such that
  \begin{enumerate}
    \item $d(x,a)=\infty$ for any $x\in V(X)-V(A)$ and $a\in V(A)$, and

    \item there is a map $\pi\colon\mathrm{r}(A)\to V(X)$ satisfying that for any $a\in V(A)$ and $x\in\mathrm{r}(A)$, $d(a,x)<\infty$ if and only if $d(a,\pi(x))<\infty$.
  \end{enumerate}
\end{definition}

\begin{lemma}
  \label{r-cofibration}
  For $1\le r\le s\le\infty$, an $r$-cofibration is an $s$-cofibration.
\end{lemma}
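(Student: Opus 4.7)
The plan is to show that the same map $\pi\colon\mathrm{r}(A)\to V(X)$ that witnesses the $r$-cofibration property for $A\to X$ also witnesses the $s$-cofibration property. Condition (1) in Definition \ref{def cofibration} and in the definition of an $\infty$-cofibration is identical and independent of $r$, so only condition (2) needs to be verified. The inequality $d(a,\pi(x))\le d(a,x)$ is inherited without change, so only the sum condition (respectively the finiteness equivalence) remains.

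For finite $s$ with $r\le s<\infty$, I would argue by case analysis on $d(a,x)$ for arbitrary $a\in V(A)$ and $x\in\mathrm{r}(A)$. If $d(a,x)\le r$, the $r$-cofibration hypothesis gives $d(a,\pi(x))+d(\pi(x),x)\le r\le s$. If $r<d(a,x)\le s$, the equality branch of the $r$-cofibration condition yields $d(a,\pi(x))+d(\pi(x),x)=d(a,x)\le s$. If $d(a,x)>s\ge r$, then $d(a,x)>r$, so the equality $d(a,\pi(x))+d(\pi(x),x)=d(a,x)$ holds and is exactly the equality required by the $s$-cofibration hypothesis.

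For $s=\infty$, the forward implication $d(a,x)<\infty\Rightarrow d(a,\pi(x))<\infty$ is immediate from $d(a,\pi(x))\le d(a,x)$. The main obstacle is the reverse implication, since the $r$-cofibration condition controls things in terms of $d(a,x)$ rather than $d(a,\pi(x))$. My plan here is first to prove the auxiliary fact that $d(\pi(x),x)<\infty$ for every $x\in\mathrm{r}(A)$: by definition of the reach there exists some $a_0\in V(A)$ with $d(a_0,x)<\infty$, and applying whichever case of the $r$-cofibration sum condition is appropriate shows that $d(\pi(x),x)$ is bounded by either $r$ or $d(a_0,x)$, hence finite. Then for any $a\in V(A)$ with $d(a,\pi(x))<\infty$, the triangle inequality \eqref{triangle inequality} yields
\[
  d(a,x)\le d(a,\pi(x))+d(\pi(x),x)<\infty,
\]
completing the equivalence and finishing the proof.
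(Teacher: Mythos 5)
Your proof is correct and takes essentially the same approach as the paper: the finite-$s$ case is the routine case analysis on $d(a,x)$ that the paper treats as trivial, and for $s=\infty$ you use exactly the paper's argument, first deducing $d(\pi(x),x)<\infty$ from some $a_0\in V(A)$ with $d(a_0,x)<\infty$ and then applying the triangle inequality.
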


\begin{proof}
  The only nontrivial case is $r<\infty$ and $s=\infty$. Let $A\to X$ be an $r$-cofibration for $r<\infty$, ane let $a\in V(A)$ and $x\in\mathrm{r}(A)$. For proving that $A\to X$ is an $\infty$-cofibration, it is sufficient to show that $d(a,\pi(x))<\infty$ implies $d(a,x)<\infty$. Since $x\in\mathrm{r}(A)$, there is $b\in V(A)$ such that $d(b,x)<\infty$, implying that $d(b,\pi(x))+d(\pi(x),x)\le\max\{r,d(b,x)\}$. Then $d(\pi(x),x)<\infty$, and so
  \[
    d(a,x)\le d(a,\pi(x))+d(\pi(x),x)<\infty
  \]
  whenever $d(a,\pi(x))<\infty$. Thus the proof is finished.
\end{proof}

We define a directed graph $\widetilde{Y}$ by
\begin{align*}
  V(\widetilde{Y})&=V(Y)\sqcup V(A)\\
  E(\widetilde{Y})&=E(Y)\sqcup E(A)\sqcup\{(a,\varphi_Y(a))\mid a\in V(A)\}.
\end{align*}
Then $\widetilde{Y}$ is a mapping cylinder of $\varphi_Y$ defined by Grigor'yan, Lin, Muranov, and Yau \cite{GLMY2}, so that $A$ and $Y$ are subgraphs of $\widetilde{Y}$ and there is a natural map $\rho\colon\widetilde{Y}\to Y$ such that $\rho\vert_Y=1_Y$ and $\rho(a)=\varphi_Y(a)$ for $a\in V(A)\subset V(\widetilde{Y})$.

%\begin{lemma}
%  \label{cylinder}
%  The map $\rho\colon\widetilde{Y}\to Y$ is a $1$-homotopy equivalence.
%\end{lemma}

%\begin{proof}
%  Let $\iota\colon Y\to\widetilde{Y}$ denote the inclusion. Then $\rho\circ\iota=1$. We can define a $1$-homotopy $\{\gamma_x\}_{x\in V(\widetilde{Y})}$ from $\iota\circ\rho$ to the identity map of $Y$ by
%  \[
%    \gamma_x=
%    \begin{cases}
%      c_x&x\in V(Y)\\
%      (x,\varphi_Y(x))&x\in V(A)
%    \end{cases}
%  \]
%  where $c_x$ denote the constant map at $x$.
%\end{proof}

\begin{lemma}
  \label{(X,Y)}
  If the first condition in Definition \ref{def cofibration} is satisfied, then the pair $(X,\widetilde{Y})$ is $r$-separable for $1\le r\le\infty$.
\end{lemma}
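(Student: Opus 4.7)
The plan is to prove the stronger claim that every degenerate $\Gamma_r$ in $X\cup\widetilde{Y}$ already lies entirely in either $X$ or $\widetilde{Y}$, after which Lemma \ref{trivially separable} yields $r$-separability uniformly for every $1\le r<\infty$, and hence $\infty$-separability as well. First I would partition the vertex set of $X\cup\widetilde{Y}$ into three disjoint regions $V(X)-V(A)$, $V(A)$, and $V(Y)$, and classify the edges of $X\cup\widetilde{Y}$: an edge of $X$ has both endpoints in $V(X)$, an edge of $Y$ has both endpoints in $V(Y)$, and the remaining edges are the bridges $(a,\varphi_Y(a))$ of $\widetilde{Y}$ going from $V(A)$ to $V(Y)$. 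The hypothesis $d(x,a)=\infty$ for $x\in V(X)-V(A)$ and $a\in V(A)$ then forbids any edge of $X$ with source in $V(X)-V(A)$ and target in $V(A)$. From this I extract the key monotonicity: along any directed path in $X\cup\widetilde{Y}$, once a vertex lies in $V(X)-V(A)$ the path can neither return to $V(A)$ nor reach $V(Y)$, and once a vertex lies in $V(Y)$ the path must remain in $V(Y)$.

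Given a map $h\colon\Gamma_r\to X\cup\widetilde{Y}$, set $P=h(u_0)=h(v_0)$ and $Q=h(u_r)=h(v_r)$. Under $h$ the two directed sequences $u_0,u_1,\ldots,u_r$ and $v_0,v_1,\ldots,v_r$ trace two directed paths from $P$ to $Q$ in $X\cup\widetilde{Y}$ (after collapsing any repeated consecutive vertices caused by $h$ being merely a map). I would then split into three cases according to the region containing $Q$. If $Q\in V(X)-V(A)$ then monotonicity keeps every vertex of both paths inside $V(X)$ and forbids bridge edges, so the image of $h$ lies in $X$. If $Q\in V(A)$ then, since leaving $V(A)$ is irreversible, both paths are entirely contained in $V(A)\subset V(X)\cap V(\widetilde{Y})$ and use only edges of $A$, landing inside $X$ (and also inside $\widetilde{Y}$). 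If $Q\in V(Y)$ then no intermediate vertex can sit in $V(X)-V(A)$, so both paths lie in $V(A)\cup V(Y)$ and use only edges of $A$, edges of $Y$, and bridges, which together are precisely the edges of $\widetilde{Y}$; hence the image of $h$ lies in $\widetilde{Y}$.

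In every case the image of $h$ is contained in either $X$ or $\widetilde{Y}$, and applying Lemma \ref{trivially separable} completes the argument. The one bookkeeping point to attend to is that $h$ is only a map, not a homomorphism, so individual edges of $\Gamma_r$ may collapse to vertices; this is harmless because the collapsed sequences still trace honest directed paths in $X\cup\widetilde{Y}$ whose vertex-region pattern is governed by the monotonicity above. Beyond this bookkeeping I do not anticipate any serious obstacle.
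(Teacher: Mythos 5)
Your proposal is correct and follows essentially the same route as the paper: both show that every degenerate $\Gamma_s$ in $X\cup_A\widetilde{Y}$ lies entirely in $X$ or in $\widetilde{Y}$ (using that no directed path goes from $V(Y)$ to $V(X)$ by construction of $\widetilde{Y}$, and none goes from $V(X)-V(A)$ to $V(A)$ or $V(Y)$ by condition (1)), and then invoke Lemma \ref{trivially separable}. Your three-region case analysis simply spells out in more detail the reachability argument the paper states in two sentences.
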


\begin{proof}
  By the definition of $\widetilde{Y}$, there is no directed path from $V(Y)$ to $V(X)$. By assumption, there is no directed path from $V(X)-V(A)$ to $V(Y)$ either. Then any map $\Gamma_s\to X\cup_A\widetilde{Y}$ with $1\le s<\infty$ factors through $X$ or $\widetilde{Y}$, and so by Lemma \ref{trivially separable}, $(X,\widetilde{Y})$ is $r$-separable.
\end{proof}

Now we consider the pushout of directed graphs
\[
  \xymatrix{
    A\ar[r]^{\varphi_X}\ar[d]&X\ar[d]\\
    \widetilde{Y}\ar[r]&X\cup_A\widetilde{Y}
  }
\]
where the left map is the inclusion. Through the map $\rho\colon\widetilde{Y}\to Y$, there is a natural map $\tilde{\rho}\colon X\cup_A\widetilde{Y}\to X\cup_AY$. By Theorem \ref{main 2} and Lemma \ref{(X,Y)}, we get:

\begin{proposition}
  \label{pushout lift}
  For $1\le r\le\infty$, the commutative diagram
  \[
    \xymatrix{
      \Pi_1^{r}(A)\ar[r]\ar[d]&\Pi_1^{r}(X)\ar[d]\\
      \Pi_1^{r}(\widetilde{Y})\ar[r]&\Pi_1^{r}(X\cup_A\widetilde{Y})
    }
  \]
  is a pushout.
\end{proposition}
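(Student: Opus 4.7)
The plan is to apply Theorem \ref{main 2} directly to the pair $(X,\widetilde{Y})$, viewed as subgraphs of the pushout $X\cup_A\widetilde{Y}$. The entire proof then reduces to two things: first, that the four groupoids in the desired square agree with the four groupoids appearing in the Seifert--van Kampen square for the pair $(X,\widetilde{Y})$; and second, that this pair is $r$-separable so that Theorem \ref{main 2} is applicable.

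For the first point, I would verify that, inside $X\cup_A\widetilde{Y}$, one has the subgraph identifications $X\cap\widetilde{Y}=A$ and $X\cup\widetilde{Y}=X\cup_A\widetilde{Y}$. This is immediate from the explicit construction of $\widetilde{Y}$: its vertex set $V(Y)\sqcup V(A)$ distinguishes a copy of $V(A)$, which the pushout identifies with $\varphi_X(V(A))\subseteq V(X)$, while the remaining vertices $V(Y)$ together with the extra edges $E(Y)$ and the cylinder edges $\{(a,\varphi_Y(a))\mid a\in V(A)\}$ all lie outside the image of $X$; hence the intersection has vertex set exactly $V(A)$ and edge set exactly $E(A)$. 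This also makes clear that both legs $X\hookrightarrow X\cup_A\widetilde{Y}$ and $\widetilde{Y}\hookrightarrow X\cup_A\widetilde{Y}$ are injective, since the inclusion $A\hookrightarrow\widetilde{Y}$ is.

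For the second point, Lemma \ref{(X,Y)}, applied under the first condition of Definition \ref{def cofibration} (which is the standing hypothesis in the setting leading up to the proposition), yields that the pair $(X,\widetilde{Y})$ is $r$-separable for every $1\le r\le\infty$. Feeding this into Theorem \ref{main 2} then produces precisely the claimed pushout diagram. There is no real obstacle here: all the substantive work has already been absorbed into Theorem \ref{main 2} and into the careful design of the mapping cylinder $\widetilde{Y}$, whose purpose is exactly to realise $A$ as the intersection $X\cap\widetilde{Y}$ and to force the non-existence of directed paths from $V(X)-V(A)$ back into $V(A)$ through $\widetilde{Y}$. The present proposition is therefore an assembly step.
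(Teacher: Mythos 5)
Your proposal is correct and follows exactly the paper's route: the paper deduces this proposition directly from Theorem \ref{main 2} combined with Lemma \ref{(X,Y)}, taking the identifications $X\cap\widetilde{Y}=A$ and $X\cup\widetilde{Y}=X\cup_A\widetilde{Y}$ for granted where you spell them out. No substantive difference.
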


For $k=0,1,\ldots,n$, let $\vec{I}_{n+1}(k)$ denote an element of $\I_{n+1}$ given by
\[
  E(\vec{I}_n(k))=\{(0,1),\ldots,(k-1,k),(k+1,k),(k+1,k+2),\ldots,(n,n+1)\}.
\]
Namely, $\vec{I}_{n+1}(k)$ is obtained from $\vec{I}_{n+1}$ by reversing the edge $(k,k+1)$.

\begin{lemma}
  \label{lift}
  Let $f\colon\vec{I}_n\to X\cup_AY$ be a path from $V(Y)$ to $V(X)-V(A)$. If $\varphi_X$ is an $r$-cofibration, then for each $a\in V(A)$ with $\varphi_Y(a)=f(k)$, there is a path $\tilde{f}_a\colon\vec{I}_{n+1}(k)\to X\cup_A\widetilde{Y}$ for some $0\le k\le n$ such that $\tilde{f}_a(k+1)=a\in V(A)$ and
  \begin{equation}
    \label{lift f}
    \tilde{f}_a(i)=
    \begin{cases}
      f(i)\in V(Y)&i=0,1,\ldots,k\\
      f(i-1)\in V(X)-V(A)&i=k+2,\ldots,n+1.
    \end{cases}
  \end{equation}
  Moreover, $\tilde{f}_a\approx_r\tilde{f}_b$ for any $b\in V(A)$ with $\varphi_Y(b)=f(k)$.
\end{lemma}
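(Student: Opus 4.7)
The plan is to identify a canonical transition index $k$ where $f$ first leaves $V(Y)$, read $\tilde f_a$ directly off the displayed formula, and finally realise the $C_r$-homotopy between the different lifts as an elementary $C_r$-homotopy obtained from a (possibly degenerate) map $\Gamma_r\to X\cup_A\widetilde Y$ built out of the midpoint supplied by the $r$-cofibration hypothesis.

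First I would take $k$ to be the largest index with $f(k)\in V(Y)$. Since $f(0)\in V(Y)$, $f(n)\in V(X)\setminus V(A)$, and every out-edge from a vertex of $V(X)\setminus V(A)$ in $X\cup_AY$ comes from $E(X)$, the first clause of Definition \ref{def cofibration} forces $f(j)\in V(X)\setminus V(A)$ for all $j>k$. The edge $(f(k),f(k+1))$ therefore lifts to $E(X)$, which forces $f(k)\in V(X)\cap V(Y)=V(A)$. Next, for each $a\in V(A)$ with $\varphi_Y(a)=f(k)$, the displayed formula determines $\tilde f_a$ uniquely, and checking that it is a map of directed graphs reduces to a case analysis on the edges of $\vec I_{n+1}(k)$: edges with index $<k$ go to $E(Y)\subseteq E(\widetilde Y)$; those with index $>k+1$ go to $E(X)$; the reversed edge $(k+1,k)$ maps to $(a,\varphi_Y(a))\in E(\widetilde Y)$ by the mapping cylinder construction; and the forward edge $(k+1,k+2)$ maps to $(a,f(k+1))\in E(X)$.

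For the final clause, fix $a,b\in V(A)$ with $\varphi_Y(a)=\varphi_Y(b)=f(k)$. Since $\tilde f_a$ and $\tilde f_b$ agree at every index except $k+1$, Lemma \ref{C_r-homotopy} reduces the problem to showing that the two ``V''-fragments $f(k)\leftarrow a\to f(k+1)$ and $f(k)\leftarrow b\to f(k+1)$ are $C_r$-homotopic in $X\cup_A\widetilde Y$. Applying the midpoint map $\pi$ of Definition \ref{def cofibration}(2) at $x=f(k+1)\in\mathrm r(A)$ yields a common vertex $c=\pi(f(k+1))\in V(X)$ together with directed paths $a\to c\to f(k+1)$ and $b\to c\to f(k+1)$ in $X$ of total length at most $r$ on each side. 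Prepending the mapping cylinder edges $(a,\varphi_Y(a)),(b,\varphi_Y(b))$ of $\widetilde Y$ assembles these into a (possibly degenerate) map $\Gamma_r\to X\cup_A\widetilde Y$ whose associated $\rho$-loop realises the required elementary $C_r$-homotopy via Definition \ref{n-homotopy def} and the combinatorial description in Theorem \ref{equivalence def}.

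The main obstacle is this final step: the local loop at $f(k)$ has alternating edge-directions (back, forward, back, forward), while the two sides of $\Gamma_r$ must be purely forward directed paths of common length $r$. The key point is that the two initial direction reversals can be absorbed into the mapping cylinder edges $(a,\varphi_Y(a))$ and $(b,\varphi_Y(b))$ in $\widetilde Y$, and that the forward halves of both sides can be routed through the common midpoint $c=\pi(f(k+1))$ in $V(X)$; the length bound in Definition \ref{def cofibration}(2), applied at $x=f(k+1)$, is precisely what ensures that both sides of $\Gamma_r$ can be realised with total forward length at most $r$.
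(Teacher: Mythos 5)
Your identification of the transition index $k$ and the construction of $\tilde f_a$ agree with the paper's proof, but the final clause --- the only nontrivial part of the lemma --- has a genuine gap. An elementary $C_r$-homotopy inserts a loop $h\circ\rho_z$ for a map $h\colon\Gamma_r\to X\cup_A\widetilde Y$, and every such loop runs forward along one directed path from the common source $u_0=v_0$ to the common sink $u_r=v_r$ and backward along the other: it has exactly two direction reversals. The loop by which $\tilde f_a$ and $\tilde f_b$ differ, namely $f(k)\leftarrow a\to f(k+1)\leftarrow b\to f(k)$ (or its version routed through $c=\pi(f(k+1))$), has four reversals, so it is not the image of any $\rho_z$; equivalently, a single $\Gamma_r$ with $a$ on one side and $b$ on the other would require a common source admitting directed paths \emph{to} both $a$ and $b$, and the mapping cylinder edges $(a,\varphi_Y(a))$ and $(b,\varphi_Y(b))$ point away from $a$ and $b$, so they cannot supply one. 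Your claim that ``the two initial direction reversals can be absorbed into the mapping cylinder edges'' is precisely the step that needs an argument, and as stated it does not go through: no zig-zag of elementary homotopies is exhibited, and a single one cannot suffice.

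The paper avoids comparing $\tilde f_a$ and $\tilde f_b$ directly. Writing $x=f(k+1)$ and $y=f(k)$, it builds two degenerate $\Gamma_r$'s \emph{both having $a$ as their source} (which is possible because all the relevant edges point out of $a$): one with sides $a\to x$ and $a\to\pi(x)\to x$, and one with sides $a\to y\to\varphi_Y(\pi(x))$ and $a\to\pi(x)\to\varphi_Y(\pi(x))$, the length bounds coming from $d(a,x)=1\le r$ exactly as in your sketch. These deform the fragment $y\leftarrow a\to x$ of $\tilde f_a$ onto the path $y\to\varphi_Y(\pi(x))\leftarrow\pi(x)\to x$, which no longer involves $a$; applying the same deformation to $b$ gives $\tilde f_a\approx_r\tilde f_b$ by transitivity. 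Note that this route uses the additional vertex $\varphi_Y(\pi(x))$ of $\widetilde Y$, needed so that the mapping-cylinder edge $a\to y$ can be placed on one side of a digon with source $a$; your sketch routes only through $c=\pi(f(k+1))$ and omits this, which is why the pieces cannot be assembled into legitimate maps from $\Gamma_r$. To repair your argument you would need to specify an explicit $a$-independent intermediate path and a zig-zag of at least two elementary $C_r$-homotopies reaching it, essentially reconstructing the paper's figure.
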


\begin{proof}
  We first consider the $1\le r<\infty$ case. Since $\varphi_X$ is an $r$-cofibration, there is no directed path from $V(X)-V(A)$ to $V(Y)$ in $X\cup_AY$. Then there is a unique $0\le k\le r$ such that $f(i)\in V(Y)$ for $0\le i\le k$ and $f(i)\in V(X)-V(A)$ for $k+1\le i\le r$. In this case, we must have $f(k)\in V(\varphi_Y(A))$. Now we take any $a\in V(A)$ with $\varphi_Y(a)=f(k)$ and $(a,x)\in E(X)$. Then we can define $\tilde{f}_a\colon\vec{I}_{r+1}(k)\to X\cup_A\widetilde{Y}$ by \eqref{lift f} and $\tilde{f}(k+1)=a$. Thus the first statement follows. Let $x=f(k+1)$ and $y=f(k)$. Since $\varphi_X$ is an $r$-cofibration and $d(a,x)=1\le r$, we have
  \[
    d(a,\pi(x))+d(\pi(x),x)\le r.
  \]
  Since $x\in V(X)-V(A)$, we have $d(\pi(x),x)\ge 1$, implying $d(a,\pi(x))\le r-1$. Then we get
  \[
    d(a,\pi(x))+d(\pi(x),\varphi_Y(\pi(x)))\le r\quad\text{and}\quad d(a,y)+d(y,\varphi_Y(\pi(x)))\le r.
  \]
  Then we obtain the following subgraph of $X\cup_A\widetilde{Y}$ such that the upper path from $x$ to $y$ through $a$ is $\tilde{f}_a$, and the left square and the right triangle are degenerate $\Gamma_r$'s, where the arrow from $\pi(x)$ to $x$ is a directed path and the remaining arrows are edges.

  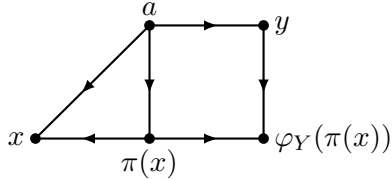
\begin{figure}[H]
    \centering
    \begin{tikzpicture}[x=5mm, y=5mm, thick]
      \draw[myarrow=.6](3,3)--(6,3);
      \draw[myarrow=.6](6,3)--(6,0);
      \draw[myarrow=.6](3,3)--(0,0);
      \draw[myarrow=.6](3,0)--(6,0);
      \draw[myarrow=.6](3,3)--(3,0);
      \draw[myarrow=.6](3,0)--(0,0);
      \fill[black](6,3) node[right]{$y$} circle(2pt);
      \fill[black](0,0) node[left]{$x$} circle(2pt);
      \fill[black](3,0) node[below]{$\pi(x)$} circle(2pt);
      \fill[black](3,3) node[above]{$a$} circle(2pt);
      \fill[black](6,0) node[right]{$\varphi_Y(\pi(x))$} circle(2pt);
    \end{tikzpicture}
    \caption{Deformation of $\tilde{f}_a$}
  \end{figure}

  \noindent Therefore by Lemma \ref{subdivision} and the observation after it, $\tilde{f}_a$ is $C_r$-homotopic to the bottom-right perimeter path from $x$ to $y$ in the above figure, which is independent of the choice of $a$. Thus the second statement follows.

  Forgetting about the length of paths, the above proof works for $r=\infty$, completing the proof.
\end{proof}

\begin{lemma}
  \label{lift homotopy}
  Let $f,g\colon\vec{I}_r\to X\cup_AY$ be paths from $V(Y)$ to $V(X)-V(A)$. If $\varphi_X$ is an $r$-cofibration and $f\approx_rg$, then $\tilde{f}_a\approx_r\tilde{g}_b$, where $\tilde{f}_a$ and $\tilde{g}_b$ are as in Lemma \ref{lift}.
\end{lemma}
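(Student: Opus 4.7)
The plan is to induct on the length of a zig-zag of elementary $C_r$-homotopies realising $f\approx_r g$. Concatenation of $C_r$-homotopies, together with the ``moreover'' part of Lemma~\ref{lift} (which asserts $\tilde{f}_a\approx_r\tilde{f}_{a'}$ for any two choices $a,a'\in V(A)$ over the transition vertex of $f$), lets us chain intermediate lifts across the zig-zag, so it suffices to treat a single elementary $C_r$-homotopy $f\to_r g$.

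Writing $f=f_1\cdot f_2$ and $g=f_1\cdot(h\circ\rho_z)\cdot f_2$ for some $h\colon\Gamma_s\to X\cup_AY$ with $s\in\{0,r\}$ and some vertex $z$ of $\Gamma_s$, I would first locate the unique transition index $k$ of $f$ characterised by $f(k)\in V(Y)$ and $f(k+1)\in V(X)-V(A)$, which exists and is unique since $\varphi_X$ is an $r$-cofibration (no directed path from $V(X)-V(A)$ to $V(A)$, cf.\ the proof of Lemma~\ref{lift}). I would then split into two regimes according to how the image of $h$ meets the transition.

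In the easy regime the image of $h\circ\rho_z$ lies entirely on one side of the transition: either inside the image of $\widetilde{Y}$ (when $h(z)$ is among $f(0),\ldots,f(k)$ and the loop does not touch $V(X)-V(A)$) or inside $X$ (when $h(z)$ is among $f(k+1),\ldots,f(n)$ and the loop does not meet $V(Y)-\varphi_Y(V(A))$). In either case $h$ lifts tautologically to a map $\tilde{h}\colon\Gamma_s\to X\cup_A\widetilde{Y}$ landing in $\widetilde{Y}$ or in $X$, so the elementary homotopy lifts directly to an elementary $C_r$-homotopy $\tilde{f}_a\to_r\tilde{g}_a$ for a common choice of $a$. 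In the crossing regime the image of $h$ straddles the transition, so some arm of $\Gamma_s$ starts in $V(Y)$ and ends in $V(X)-V(A)$. Here I would apply condition~(2) of Definition~\ref{def cofibration}: the map $\pi$ and the inequality $d(a,\pi(x))+d(\pi(x),x)\le r$ supply, exactly as in the deformation diagram at the end of the proof of Lemma~\ref{lift}, a pair of degenerate $\Gamma_r$'s replacing each crossing arm by a canonical path factoring through a chosen vertex of $V(A)$. Doing this simultaneously on both arms, with a common $a\in V(A)$, produces a map $\tilde{h}\colon\Gamma_r\to X\cup_A\widetilde{Y}$ that, together with the evident lifts of $f_1$ and $f_2$, realises $\tilde{f}_a\approx_r\tilde{g}_a$.

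The main obstacle will be the crossing regime: one has to coordinate the choice of the auxiliary vertex $\pi(x)$ and the $V(A)$-vertex across both arms of $\Gamma_s$ so that the two separate deformations paste into a single lift $\tilde{h}\colon\Gamma_r\to X\cup_A\widetilde{Y}$ whose length bound is still $r$, and to verify that the resulting homotopy actually projects under $\tilde{\rho}\colon X\cup_A\widetilde{Y}\to X\cup_AY$ to the original $h$ up to $C_r$-homotopy. The case $r=\infty$ is handled at the end by dropping the length bookkeeping, exactly as in the final paragraph of the proof of Lemma~\ref{lift}.
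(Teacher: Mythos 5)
Your reduction to a single elementary homotopy and your case split (loop on one side of the transition versus loop straddling it) match the paper's proof, and the easy regime is handled the same way. The gap is in the crossing regime, which is where all the content of the lemma lives. First, the claim that the two lifted arms assemble into ``a single lift $\tilde{h}\colon\Gamma_r\to X\cup_A\widetilde{Y}$'' cannot work as stated: by Lemma \ref{lift} the lift of a directed arm crossing the transition is a path modelled on $\vec{I}_{n+1}(k)$, i.e.\ it traverses the cylinder edge $(a,\varphi_Y(a))$ \emph{backwards}, so the lifted loop $\widetilde{W}$ has arms that are not directed paths and is therefore not a degenerate $\Gamma_r$ at all. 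Second, the two arms of the degenerate $\Gamma_r$ in $X\cup_AY$ may cross from $V(Y)$ into $V(X)-V(A)$ at \emph{different} vertices of $\varphi_Y(V(A))$, so there is in general no ``common $a\in V(A)$''; this is exactly why the conclusion of the lemma is $\tilde{f}_a\approx_r\tilde{g}_b$ with possibly distinct $a,b$.

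What the paper actually does in this regime is subdivide the lifted loop $\widetilde{W}$ into \emph{five} degenerate $\Gamma_r$'s: two triangles on the vertices $a,\pi(w_r),w_r$ and $b,\pi(w_r),w_r$; two squares on $a,\pi(w_r),\varphi_Y(\pi(w_r)),\varphi_Y(a)$ and its $b$-analogue; and one square on $w_0,\varphi_Y(a),\varphi_Y(\pi(w_r)),\varphi_Y(b)$ lying entirely in $Y$. The nontrivial step, which your proposal only flags as ``the main obstacle'' without resolving, is the length verification for that last square, namely
\[
  l(w_0,\varphi_Y(a))+l(\varphi_Y(a),\varphi_Y(\pi(w_r)))\le l(w_0,\varphi_Y(a))+l(a,\pi(w_r))\le l(w_0,\varphi_Y(a))+l(a,w_r)\le r,
\]
which uses both that $\varphi_Y$ does not increase lengths and condition (2) of Definition \ref{def cofibration}. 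Without producing this subdivision and this estimate, the crossing case is not proved; so the proposal identifies the right strategy but leaves the essential construction missing and asserts an intermediate object (a single lifted $\Gamma_r$ through a common $a$) that does not exist.
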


\begin{proof}
  First, we consider the $1\le r<\infty$ case. It is sufficient to prove the $f\to_rg$ case. Let $W$ be a degenerate $\Gamma_r$ in $X\cup_AY$ which gives a $C_r$-homotopy from $f$ to $g$, and let $w_0,w_r$ be vertices of $W$ corresponding to the vertices $u_0,u_r$ of $\Gamma_r$. Since $\varphi_X$ is an $r$-cofibration, there is no directed path from $V(X)-V(A)$ to $V(\varphi_Y(A))$ in $X\cup_AY$. Hence if $w_0,w_r\in V(X)$, then $W\subset X\subset X\cup_AY$, and if $w_0,w_r\in V(Y)$, then $W\subset Y\subset X\cup_AY$. Hence in these cases, there is a copy of $W$ in $X\cup_A\widetilde{Y}$, implying $\tilde{f}_a\approx_r\tilde{g}_b$. The only remaining case is that $w_0\in V(Y)$ and $w_r\in V(X)-V(A)$. In this case, it follows from Lemma \ref{lift} that the two directed paths from $w_0$ to $w_r$ in $W$ lifts to $X\cup_A\widetilde{Y}$. Let $\widetilde{W}$ be the subgraph of $X\cup_A\widetilde{Y}$ defined by these lifts. Since $\varphi_X$ is an $r$-cofibration, we can argue as in the proof of Lemma \ref{lift} to obtain a subdivision of $\widetilde{W}$ in the following figure, where the arrows from $a$ to $\varphi_Y(a)$ and from $b$ to $\varphi_Y(b)$ are edges and the remaining arrows are directed paths.

  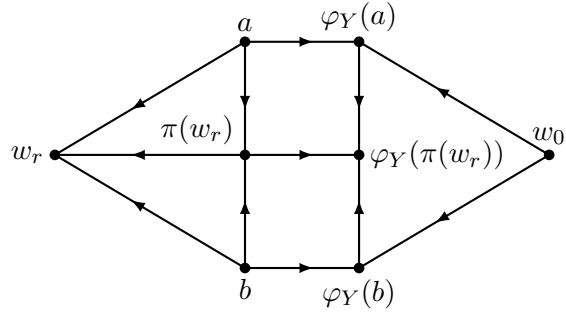
\begin{figure}[H]
    \label{subdivision W}
    \centering
    \begin{tikzpicture}[x=5mm, y=5mm, thick]
      \draw[myarrow=.6](5,0)--(0,0);
      \draw[myarrow=.6](5,0)--(8,0);
      \draw[myarrow=.6](5,3)--(5,0);
      \draw[myarrow=.6](5,3)--(0,0);
      \draw[myarrow=.6](5,3)--(8,3);
      \draw[myarrow=.6](8,3)--(8,0);
      \draw[myarrow=.6](5,-3)--(5,0);
      \draw[myarrow=.6](5,-3)--(0,0);
      \draw[myarrow=.6](5,-3)--(8,-3);
      \draw[myarrow=.6](8,-3)--(8,0);
      \draw[myarrow=.6](13,0)--(8,3);
      \draw[myarrow=.6](13,0)--(8,-3);
      \fill[black](0,0) node[left]{$w_r$} circle(2pt);
      \fill[black](5,0) node[above left]{$\pi(w_r)$} circle(2pt);
      \fill[black](8,0) node[right]{$\varphi_Y(\pi(w_r))$} circle(2pt);
      \fill[black](5,3) node[above]{$a$} circle(2pt);
      \fill[black](8,3) node[above]{$\varphi_Y(a)$} circle(2pt);
      \fill[black](5,-3) node[below]{$b$} circle(2pt);
      \fill[black](8,-3) node[below]{$\varphi_Y(b)$} circle(2pt);
      \fill[black](13,0) node[above]{$w_0$} circle(2pt);
    \end{tikzpicture}
    \caption{Subdivision of $\widetilde{W}$}
  \end{figure}

  \noindent As in the proof of Lemma \ref{lift}, the left two triangles and the middle two squares are degenerate $\Gamma_r$'s in $X\cup_A\widetilde{Y}$. Let $l(x,y)$ denote the length of the path from $x$ to $y$ in Figure \ref{subdivision W} for $(x,y)=(w_0,\varphi_Y(a)),(\varphi_Y(a),\varphi_Y(\pi(w_r)))$, $(a,\pi(w_r)),(a,w_r)$. Then since $\varphi_X$ is an $r$-cofibration, we have
  \begin{align*}
    l(w_0,\varphi_Y(a))+l(\varphi_Y(a),\varphi_Y(\pi(w_r)))&\le l(w_0,\varphi_Y(a))+l(a,\pi(w_r))\\
    &\le l(w_0,\varphi_Y(a))+l(a,w_r)\\
    &\le r.
  \end{align*}
  The same inequalities hold if we replace $a$ with $b$, so the right square in Figure \ref{subdivision W} is a degenerate $\Gamma_r$. Thus by Lemma \ref{subdivision} and the observation after it, we obtain $\tilde{f}_{a}\approx_r\tilde{g}_{b}$.

  Next, the $r=\infty$ case is proved verbatim by forgetting about the length of paths, completing the proof.
\end{proof}

Note that $V(X\cup_AY)=(V(X)-V(A))\sqcup V(Y)$ and $V(X\cup_A\widetilde{Y})=(V(X)-V(A))\sqcup V(Y)\sqcup V(A)$. Then we get an inclusion $V(X\cup_AY)\to V(X\cup_A\widetilde{Y})$. For $1\le r\le\infty$, we define a functor $F^{r}\colon\Pi_1^{r}(X\cup_AY)\to \Pi_1^{r}(X\cup_A\widetilde{Y})$ by the above inclusion on objects and
\[
  F^{r}([f])=
  \begin{cases}
    [\tilde{f}_a]&x\in V(X)-V(A)\text{ and }y\in V(Y)\\
    [f]&\text{otherwise}
  \end{cases}
\]
where $f$ is a directed path from $y$ to $x$ in $X\cup_AY$ and $\tilde{f}$ is as in Lemma \ref{lift}.

\begin{lemma}
  \label{F^r well-defined}
  For $1\le r\le\infty$, the functor $F^{r}$ is well-defined whenever $\varphi_X$ is an $r$-cofibration.
\end{lemma}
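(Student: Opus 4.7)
The plan is to verify that $F^r$, specified on directed paths by the given case-split formula, descends to a well-defined functor. Via the algebraic description of $\Pi_1^r$ in Proposition \ref{algebric description}, this amounts to checking two things: (i) the value $F^r([f])$ is independent of the auxiliary choice of $a \in V(A)$ used in the lift $\tilde{f}_a$ of Lemma \ref{lift}, and (ii) for any two directed composites $\alpha = e_1 \circ \cdots \circ e_p$ and $\beta = f_1 \circ \cdots \circ f_q$ with $1 \le p, q \le r$ sharing source $s$ and target $t$, one has $F^r([\alpha]) = F^r([\beta])$ in $\Pi_1^r(X \cup_A \widetilde{Y})$.

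Item (i) is immediate from the second assertion of Lemma \ref{lift}: different admissible choices of $a$ produce $C_r$-homotopic lifts. For item (ii), I would first observe that the first condition of Definition \ref{def cofibration} forbids any directed path in $X \cup_A Y$ from re-entering $V(Y)$ after leaving it, so $s \in V(X) - V(A)$ forces $t \in V(X) - V(A)$. The analysis then splits into three nontrivial cases. If $s, t \in V(Y)$, both composites remain in $V(Y)$ throughout, and since $\varphi_X$ is an induced-subgraph inclusion, they may be interpreted as composites in $\widetilde{Y}$; the identity $[\alpha] = [\beta]$ then holds already in $\Pi_1^r(\widetilde{Y}) \hookrightarrow \Pi_1^r(X \cup_A \widetilde{Y})$. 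The case $s, t \in V(X) - V(A)$ is handled symmetrically within $\Pi_1^r(X) \hookrightarrow \Pi_1^r(X \cup_A \widetilde{Y})$.

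The remaining case $s \in V(Y)$, $t \in V(X) - V(A)$ is the heart of the argument and is precisely the setting of Lemma \ref{lift homotopy}: after padding shorter composites with identity morphisms to length $r$, the pair $(\alpha, \beta)$ is realized as two sides of a degenerate $\Gamma_r$-homotopy between directed paths from $V(Y)$ to $V(X) - V(A)$, and the lemma supplies the desired $C_r$-homotopy $\tilde{\alpha}_a \approx_r \tilde{\beta}_b$ in $X \cup_A \widetilde{Y}$. This is the main obstacle; its resolution leans heavily on the $r$-cofibration hypothesis and, implicitly, on the $r$-separability of $(X, \widetilde{Y})$ established in Lemma \ref{(X,Y)} to subdivide any ambient $C_r$-homotopy into pieces lying in $X$ or in $\widetilde{Y}$. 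Once (i) and (ii) are in hand, functoriality of $F^r$ (compatibility with composition and with identities) is immediate from the structure of the formula, since transitions between the three cases occur only at vertices of $V(A)$ and the insertion of the $a$-vertex in the type-$\gamma$ lift commutes with path concatenation.
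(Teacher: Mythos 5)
Your proposal is correct and follows essentially the same route as the paper: classify directed paths (equivalently, the generating relations of $\mathcal{G}^r$) by the location of their endpoints using condition (1) of Definition \ref{def cofibration}, dispose of the cases lying entirely in $X$ or in $Y$ trivially, and reduce the crossing case to Lemmas \ref{lift} (independence of the choice of $a$) and \ref{lift homotopy} (compatibility with $C_r$-homotopies). The paper's proof is just a terser version of this same argument.
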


\begin{proof}
  Let $f$ is a directed path from $y$ to $x$ in $X\cup_AY$. Since there is no directed path from $V(A)$ to $V(X)-V(A)$, $f$ is either a path in $X,Y$ or a path from $y\in V(Y)$ to $x\in V(X)-V(A)$. Then the statement follows from Lemmas \ref{lift} and \ref{lift homotopy}.
\end{proof}

Now we are ready to prove:

\begin{theorem}
  \label{van Kampen pushout}
  If $\varphi_X\colon A\to X$ is an $r$-cofibration, then for $r\le s\le\infty$ the commutative diagram
  \[
    \xymatrix{
      \Pi_1^{s}(A)\ar[r]\ar[d]&\Pi_1^{s}(X)\ar[d]\\
      \Pi_1^{s}(Y)\ar[r]&\Pi_1^{s}(X\cup_AY)
    }
  \]
  is a pushout.
\end{theorem}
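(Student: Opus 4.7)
The plan is to verify the pushout universal property directly, using Proposition \ref{pushout lift} and the factoring functor $F^s$ of Lemma \ref{F^r well-defined} as the two key inputs. By Lemma \ref{r-cofibration}, an $r$-cofibration is an $s$-cofibration for every $s\ge r$, so both inputs apply throughout the range of the theorem.

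For existence in the universal property, fix a groupoid $\mathcal D$ together with functors $F\colon\Pi_1^s(X)\to\mathcal D$ and $G\colon\Pi_1^s(Y)\to\mathcal D$ that agree on $\Pi_1^s(A)$. Let $\rho\colon\widetilde Y\to Y$ be the canonical retraction and set $\tilde G:=G\circ\rho_*\colon\Pi_1^s(\widetilde Y)\to\mathcal D$; since $\rho\vert_A=\varphi_Y$, the pair $(F,\tilde G)$ agrees on $\Pi_1^s(A)$, and Proposition \ref{pushout lift} yields a unique $\tilde H\colon\Pi_1^s(X\cup_A\widetilde Y)\to\mathcal D$ extending them. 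The candidate factoring functor is $H:=\tilde H\circ F^s$.

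Verifying that $H$ restricts to $F$ on $\Pi_1^s(X)$ and to $G$ on $\Pi_1^s(Y)$ proceeds by the case division of $F^s$. On morphisms where $F^s([f])=[f]$ (the ``otherwise'' branch), $\tilde H$ recovers $F$ or $\tilde G\vert_Y=G$ directly from its construction. In the nontrivial case $F^s([f])=[\tilde{f}_a]$, the lift decomposes in $X\cup_A\widetilde Y$ as a $Y$-portion from $f(0)$ to $\varphi_Y(a)$, the reversed cylinder edge $(a,\varphi_Y(a))$, and an $X$-portion from $a$ to $f(n)$; under $\tilde H$ the cylinder edge passes through $\rho_*$ to the identity at $\varphi_Y(a)$ and contributes trivially. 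When $f$ is itself a path in $X$ starting in $V(A)$ (the only ``crossing'' case forced on $X$-paths, since the condition $d(x,a)=\infty$ of Definition \ref{def cofibration} traps $V(X)-V(A)$), the $Y$-portion of $\tilde{f}_a$ lies in $\varphi_Y(A)$, and the assumed compatibility $G\circ\varphi_Y=F\circ\varphi_X$ converts its $\tilde H$-value into $F$ applied to the $A$-portion of $f$; combined with the $X$-portion this reproduces $F([f])$. Uniqueness of $H$ then follows because every edge of $X\cup_A Y$ lies in $X$ or $Y$, so every morphism of $\Pi_1^s(X\cup_A Y)$ is a composition of morphisms coming from $\Pi_1^s(X)$ and $\Pi_1^s(Y)$.

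The main obstacle is this crossing-case verification: one must track the detour through $V(Y)$ introduced by $\tilde{f}_a$ and confirm that the $\rho_*$-triviality of cylinder edges together with the compatibility of $F$ and $G$ on $\Pi_1^s(A)$ exactly cancels this detour, independently of the choice of $a\in\varphi_Y^{-1}(f(k))$, the latter independence being precisely Lemma \ref{lift}.
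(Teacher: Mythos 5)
Your proposal is correct and follows essentially the same route as the paper: reduce to $s=r$ via Lemma \ref{r-cofibration}, use Proposition \ref{pushout lift} together with the functor $F^r$ of Lemma \ref{F^r well-defined} to pass through $X\cup_A\widetilde{Y}$, and check everything on edges since morphisms are generated by them. The only (cosmetic) difference is that you verify the universal property against an arbitrary cocone, whereas the paper constructs the mutually inverse pair $P^r,Q^r$ between $\Pi_1^r(X\cup_AY)$ and the abstract pushout $\mathcal{P}^r$; taking $\mathcal{D}=\mathcal{P}^r$ in your argument recovers exactly the paper's $Q^r=\tilde{H}\circ F^r$.
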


\begin{proof}
  By Lemma \ref{r-cofibration}, it is sufficient to prove the statement for $s=r$. We define a groupoid $\mathcal{P}^r$ by the pushout
  \[
    \xymatrix{
      \Pi_1^{r}(A)\ar[r]\ar[d]&\Pi_1^{r}(X)\ar[d]\\
      \Pi_1^{r}(Y)\ar[r]&\mathcal{P}^r.
    }
  \]
  Let $P^r\colon\mathcal{P}^r\to\Pi_1^{r}(X\cup_AY)$ be the natural functor, and let $Q^r\colon\Pi_1^{r}(X\cup_AY)\to\mathcal{P}^r$ be the composite of $F^{r}\colon\Pi_1^r(X\cup_AY)\to\Pi_1^r(X\cup_A\widetilde{Y})$ and the natural functor $\Pi_1^{r}(X\cup_A\widetilde{Y})\to\mathcal{P}^r$ which can be defined by Proposition \ref{pushout lift}. Let $e=(x,y)$ be an edge of $X\cup_AY$. If $x,y\in V(Y)$ or $x,y\in V(X)-V(A)$, then
  by the definition of $F^{r}$, we have
  \[
    P^r\circ Q^r([e])=[e]=Q^r\circ P^r([e]).
  \]
  If $x\in V(\varphi_Y(A))$ and $y\in V(X)-V(A)$, then by Lemma \ref{lift}, we get a path $\tilde{e}_a=\overline{(a,x)}\cdot(a,y)\colon\vec{I}_2(0)\to X\cup_A\widetilde{Y}$, where $\varphi_Y(a)=x$. Let $\hat{\rho}\colon\vec{I}_2(0)\to\vec{I}_1$ be the contraction of the edge $(1,0)$. Then $\tilde{\rho}\circ\tilde{e}_a=e\circ\hat{\rho}$, implying $\tilde{\rho}\circ\tilde{e}_a\approx_0e$. Thus we get
  \[
    Q^r\circ P^r([e])=[e]=P^r\circ Q^r([e])
  \]
  Since homomorphisms of $\mathcal{P}^r$ and $\Pi_1^{r}(X\cup_AY)$ are generated by edges of $X$ and $Y$, we obtain that $P^r$ and $Q^r$ are mutually inverse, completing the proof.
\end{proof}

Quite similarly to Corollaries \ref{van Kampen pi_1} and \ref{Mayer-vietoris sequence}, we can get the following corollaries to Theorem \ref{van Kampen pushout}.

\begin{corollary}
  \label{pushout fundamental group}
  If $\varphi_X$ is an $r$-cofibration and $A$ is connected, then for $r\le s\le\infty$ and $x_0\in V(A)$, the commutative square
  \[
    \xymatrix{
      \pi_1^{s}(A,x_0)\ar[r]\ar[d]&\pi_1^{s}(X,x_0)\ar[d]\\
      \pi_1^{s}(Y,\varphi_Y(x_0))\ar[r]&\pi_1^{s}(X\cup_AY,\varphi_Y(x_0))
    }
  \]
  is a pushout.
\end{corollary}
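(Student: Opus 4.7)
The plan is to deduce this corollary from Theorem \ref{van Kampen pushout} in exactly the same way that Corollary \ref{van Kampen pi_1} was deduced from Theorem \ref{main 2}: pass from the groupoid pushout to the group pushout via a retraction of the inclusion of the fundamental group into the fundamental groupoid.

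First, since $\varphi_X \colon A \to X$ is an induced subgraph inclusion, I may identify $x_0 \in V(A)$ with its image in $V(X)$, and similarly $\varphi_Y(x_0)$ serves as the common basepoint of $Y$ and $X\cup_A Y$ (note that $\varphi_X(x_0)$ and $\varphi_Y(x_0)$ are identified in $X\cup_A Y$). Since $A$ is connected, it is contained in a single connected component of each of $X$, $Y$, and $X\cup_A Y$, and only these basepoint components contribute to the fundamental groups in the diagram. Hence I may assume without loss of generality that $X$, $Y$, and $X \cup_A Y$ are all connected.

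Second, I invoke Theorem \ref{van Kampen pushout} to obtain the pushout of $r$-fundamental groupoids for each $r \le s \le \infty$. The remaining task is to extract from this a pushout of fundamental groups at the common basepoint. For this I use the standard categorical fact: if $Z$ is a connected directed graph with basepoint $z_0$, then the inclusion $\pi_1^{s}(Z,z_0) \hookrightarrow \Pi_1^{s}(Z)$ (viewing the group as a one-object groupoid) admits a left inverse. One constructs such a retraction $R_Z$ by choosing, for each vertex $z$ of $Z$, a morphism $\gamma_z \colon z_0 \to z$ in $\Pi_1^{s}(Z)$ (which exists by connectedness), with $\gamma_{z_0} = 1_{z_0}$, and setting $R_Z(f) = \gamma_x \cdot f \cdot \overline{\gamma_y}$ for $f \colon x \to y$.

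Third, given any target group $G$ and compatible homomorphisms $\pi_1^{s}(X,x_0) \to G$ and $\pi_1^{s}(Y,\varphi_Y(x_0)) \to G$ that agree on $\pi_1^{s}(A,x_0)$, I compose with the retractions $R_X$ and $R_Y$ (chosen so that $\gamma_{x_0} = 1_{x_0}$ in both $X$ and $Y$, consistent with $A$) to obtain functors $\Pi_1^{s}(X) \to G$ and $\Pi_1^{s}(Y) \to G$ which agree on $\Pi_1^{s}(A)$. The pushout property of the groupoid square from Theorem \ref{van Kampen pushout} yields a unique functor $\Pi_1^{s}(X\cup_A Y) \to G$, and restriction to the basepoint provides the desired group homomorphism. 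Uniqueness is verified analogously. The only delicate point is to ensure the retractions are chosen compatibly across the square so the construction respects the maps between the groupoids, which is arranged by picking the $\gamma_z$'s first on $A$ and then extending to $X$ and $Y$.
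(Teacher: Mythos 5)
Your proposal is correct and follows essentially the same route as the paper: the paper obtains this corollary from Theorem \ref{van Kampen pushout} "quite similarly to Corollary \ref{van Kampen pi_1}", i.e.\ by reducing to the connected case and using a left inverse of the inclusion $\pi_1^{s}(Z,z_0)\to\Pi_1^{s}(Z)$ for connected $Z$. You have merely spelled out the construction of that retraction and the compatibility of the choices along $A$, which the paper leaves implicit.
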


\begin{corollary}
  \label{Mayer-Vietoris sequence pushout}
  If $\varphi_X$ is an $r$-cofibration for $r\ge 2$ and $A,X,Y$ are connected, then for $r\le s<\infty$, the sequence
  \[
    E_{1,0}^s(A)\xrightarrow{((\varphi_X)_*,(\varphi_Y)_*)}E_{1,0}^s(X)\oplus E_{1,0}^s(Y)\xrightarrow{(j_X)_*-(j_Y)_*}E_{1,0}^s(X\cup_AY)\to 0
  \]
  is exact, where $j_X\colon X\to X\cup_AY$ and $j_Y\colon Y\to X\cup_AY$ denote the canonical maps.
\end{corollary}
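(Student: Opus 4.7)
The plan is to deduce this Mayer-Vietoris exactness from the Seifert-van Kampen theorem for $r$-fundamental groups (Corollary \ref{pushout fundamental group}) by abelianizing and then invoking Theorem \ref{main 1} to identify $E_{1,0}^s$ with the abelianization of $\pi_1^s$.

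First, I would check that $X\cup_A Y$ is connected: since $A,X,Y$ are connected and $A$ maps to both $X$ and $Y$, any two vertices in $X\cup_A Y$ can be joined by a zig-zag passing through the image of $A$. Fixing a basepoint $x_0\in V(A)$, Corollary \ref{pushout fundamental group} applies to give a pushout square of (generally non-abelian) groups
\[
  \xymatrix{
    \pi_1^{s}(A,x_0)\ar[r]^{(\varphi_X)_*}\ar[d]_{(\varphi_Y)_*}&\pi_1^{s}(X,x_0)\ar[d]^{(j_X)_*}\\
    \pi_1^{s}(Y,\varphi_Y(x_0))\ar[r]^{(j_Y)_*}&\pi_1^{s}(X\cup_A Y,\varphi_Y(x_0)).
  }
\]

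Next, I would use that abelianization is the left adjoint to the inclusion of abelian groups into groups, hence preserves all colimits; in particular it sends this pushout of groups to a pushout of abelian groups. By Theorem \ref{main 1}, the map $h^s\colon \pi_1^s(Z,z_0)\to E_{1,0}^s(Z)$ is identified with abelianization whenever $Z$ is connected, so applying this to $Z=A,X,Y,X\cup_A Y$ identifies the abelianized square with the corresponding square of $E_{1,0}^s$ groups and maps $(\varphi_X)_*,(\varphi_Y)_*,(j_X)_*,(j_Y)_*$.

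Finally, I would conclude using the standard fact that a pushout square of abelian groups $P=G_1\oplus_{G_0}G_2$ is equivalently the cokernel of the map $G_0\to G_1\oplus G_2$ sending $a\mapsto(\alpha(a),-\beta(a))$, so that
\[
  G_0\xrightarrow{(\alpha,\beta)}G_1\oplus G_2\xrightarrow{\gamma-\delta}P\to 0
\]
is exact: surjectivity of $\gamma-\delta$ follows because every class in $P$ is represented by some $(b,c)\in G_1\oplus G_2$, and the kernel identification is immediate from the explicit description of $P$ as a quotient of $G_1\oplus G_2$. Applied to our square, this yields exactly the sequence in the statement. There is no real obstacle here; the entire argument is formal once Corollary \ref{pushout fundamental group} and Theorem \ref{main 1} are available, and the only mild check is the connectedness of $X\cup_A Y$ so that Theorem \ref{main 1} applies to the pushout.
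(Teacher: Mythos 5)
Your proposal is correct and follows essentially the same route as the paper: the paper deduces this corollary "quite similarly" to Corollary \ref{Mayer-vietoris sequence}, i.e.\ by combining the Seifert-van Kampen pushout of fundamental groups (Corollary \ref{pushout fundamental group}) with the Hurewicz identification of $E^s_{1,0}$ as the abelianization (Theorem \ref{main 1}) to get a pushout square of abelian groups, whence the exact sequence. You merely make explicit the steps the paper leaves implicit (abelianization preserves pushouts as a left adjoint; a pushout of abelian groups is the cokernel of the difference map; connectivity of $X\cup_AY$), all of which are correct.
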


As remarked above, a cofibration of Carranza \textit{et al.} \cite{CD} is exactly a $1$-cofibration. Then the above corollaries hold for any $2\le s<\infty$ whenever $\varphi_X$ is a cofibration. In particular, Corollary \ref{Mayer-Vietoris sequence pushout} is a partial answer to the question of Hepworth and Roff \cite{HR1}. We can also prove the Mayer-Vietoris sequence of $\mathrm{RH}_1$ from Corollary \ref{pushout fundamental group} under the condition that $E^r=E^{r+1}=\cdots=E^\infty$ for some $r$.

\end{document}